\documentclass[twoside, 11pt]{article}
\usepackage{amsmath,amssymb,amsthm,mathrsfs}
\usepackage[margin=2.5cm]{geometry} 
\usepackage{lipsum}
\usepackage{titlesec,hyperref}
\usepackage{fancyhdr}
\usepackage[numbers,sort&compress]{natbib}
\usepackage{color}

\pagestyle{fancy}
\fancyhf{}
\fancyhead[CO]{\footnotesize\it
Uniform Regularity and Invicid Limit for Nematic Liquid Crystal Flows}
\fancyhead[CE]{\footnotesize\it J.C. Gao, B.L. Guo, X.Y. Xi}
\fancyfoot[CE,CO]{\footnotesize\rm\thepage}
\fancypagestyle{plain}
{
\fancyhf{}

}

\linespread{1.1}

\titleformat{\subsection}{\it}{\thesubsection.\enspace}{1.5pt}{}
\titleformat{\subsubsection}{\it}{\thesubsubsection.\enspace}{1.5pt}{}

\newtheorem{theo}{Theorem}[section]
\newtheorem{lemm}[theo]{Lemma}

\newtheorem{prop}[theo]{Proposition}
\newtheorem{rema}{Remark}[section]

\numberwithin{equation}{section}

\allowdisplaybreaks

\def\th2{\frac{\theta}{2}}

\begin{document}
\title{Uniform Regularity and Vanishing Viscosity Limit for the
Nematic Liquid Crystal Flows in Three Dimensional Domain \hspace{-4mm}}
\author{Jincheng Gao$^\dag$  \quad Boling Guo $^\dag$ \quad Xiaoyu Xi$^\ddag$\\[10pt]
\small {$^\dag$ Institute of Applied Physics and Computational Mathematics,}\\
\small {100088, Beijing, P. R. China}\\[5pt]
\small {$^\ddag $Graduate School of China Academy of Engineering Physics,}\\
\small {100088, Beijing, P. R. China}\\[5pt]
}

\footnotetext{Email: \it gaojc1998@163.com(J.C. Gao),
                     \it gbl@iapcm.ac.cn(B.L. Guo)
                     \it xixiaoyu1357@126.com(X.Y. Xi).}
\date{}

\maketitle

\begin{abstract}
In this paper, we investigate the uniform regularity and vanishing limit
for the incompressible nematic liquid crystal flows in three dimensional
bounded domain.
It is shown that there exists a unique strong solution for the incompressible
nematic liquid crystal flows with boundary condition in a finite
time interval which is independent of the viscosity.
The solution is uniformly bounded in a conormal Sobolev space.
Finally, we also study the convergence rate of the viscous solutions to the inviscid
ones.

\vspace*{5pt}
\noindent{\it {\rm Keywords}}:
nematic liquid crystal flows,  vanishing viscosity limit, convergence rate,
conormal Sobolev space.

\vspace*{5pt}
\noindent{\it {\rm 2010 Mathematics Subject Classification}}:
35Q35, 35B65, 76A15.
\end{abstract}


\section{Introduction}

\quad In this paper, we consider the incompressible nematic liquid crystal
flows as follows
\begin{equation}\label{eq1}
\left\{
\begin{aligned}
&u^{\varepsilon}_t+u^{\varepsilon}\cdot \nabla u^{\varepsilon}+\nabla p^{\varepsilon}
=\varepsilon \Delta u^{\varepsilon}
-\nabla d^{\varepsilon} \cdot \Delta d^{\varepsilon} ,
& (x, t)\in \Omega \times (0, T),\\
&d^{\varepsilon}_t+u^{\varepsilon}\cdot \nabla d^{\varepsilon}
=\Delta d^{\varepsilon}+|\nabla d^{\varepsilon}|^2 d^{\varepsilon},
& (x, t)\in \Omega \times (0, T),\\
&{\rm div} u^{\varepsilon}=0,
& (x, t)\in \Omega \times (0, T).
\end{aligned}
\right.
\end{equation}
Here $0 < T \le +\infty$ and $\Omega$ is a bounded smooth domain of $\mathbb{R}^3$.
The unknown vector functions
$u^\varepsilon(x, t)=(u^\varepsilon_1(x,t), u^\varepsilon_2(x,t), u^\varepsilon_3(x,t)), d^\varepsilon(x,t)=(d_1^\varepsilon(x,t), d_2^\varepsilon(x,t), d_3^\varepsilon(x,t))$
and scalar function $p^\varepsilon(x,t)$
represent the velocity field of fluid,
the macroscopic average of the nematic liquid crystal orientation field,
and  pressure respectively.
The parameter $\varepsilon>0$ is the inverse of the Reynolds number.
Here $$(u^\varepsilon\cdot \nabla u^\varepsilon)_i=\sum_{j=1}^3 u^\varepsilon_j\partial_j u^\varepsilon_i,~~(\nabla d^\varepsilon \cdot \Delta d^\varepsilon)_i
=\sum_{j,k=1}^3 \partial_i d_j^\varepsilon \partial_{kk}d_j^\varepsilon,$$ for $i=1,2,3$.
The system \eqref{eq1}, first proposed by Lin \cite{Lin}, is a simplified version of the general
Ericksen-Leslie system modeling the hydrodynamic flow of nematic liquid crystal
materials proposed by Ericksen \cite{Ericksen}  and Leslie \cite{Leslie}
during the period between
$1958$ and $1968$. System \eqref{eq1} is a macroscopic continuum description of the time
evolution of the material under the influence of both the fluid velocity field and the
macroscopic description of the microscopic orientation configurations of rod-like
liquid crystals. The interested readers can refer to \cite{{Lin},{Ericksen},{Leslie}},
and Lin-Liu \cite{Lin-Liu} for more details.
Corresponding to the system \eqref{eq1}, we impose the following Navier-slip type
and Neumann boundary
conditions:
\begin{equation}\label{bc1}
u^\varepsilon \cdot n=0, \quad ((Su^\varepsilon)n)_\tau=-(A u^\varepsilon)_\tau,
\quad {\rm and}
\quad \frac{ \partial d^\varepsilon}{\partial n}=0, \quad {\rm on}~\partial \Omega,
\end{equation}
where $A$ is a given smooth symmetric matrix(see \cite{Gie-Kelliher}),
$n$ is the outward unit vector normal to $\partial \Omega$,
$(A u^\varepsilon)_\tau$ represents the tangential part of $A u^\varepsilon$.
The strain tensor $Su^\varepsilon$ is defined by
\begin{equation*}
Su^\varepsilon=\frac{1}{2}\left((\nabla u^\varepsilon)+(\nabla u^\varepsilon)^t\right).
\end{equation*}
For smooth solutions, it is noticed that
$$
(2S(v)n-(\nabla \times v)\times n)_\tau=-(2S(n)v)_\tau,
$$
see \cite{Xiao-Xin1} for detail. Hence, the boundary condition \eqref{bc1} can
be written in the form of the vorticity as
\begin{equation}\label{bc2}
u^{\varepsilon}\cdot n=0,
\quad n\times (\nabla \times u^{\varepsilon})=[Bu^{\varepsilon}]_\tau,
\quad {\rm and}\quad
\frac{\partial d^{\varepsilon}}{\partial n}=0,
\quad {\rm on}~\partial \Omega,
\end{equation}
where $B=2(A-S(n))$ is symmetric matrix.
Actually, it turns out that the form \eqref{bc2} will be more
convenient than \eqref{bc1} in the energy estimates.

In this paper, we are interested in the existence of strong solution of \eqref{eq1}
with uniform bounds on an interval of time independent of viscosity
$\varepsilon\in (0, 1]$ and the vanishing
viscosity  limit to the corresponding
invicid nematic liquid crystal flows as $\varepsilon$ vanishes, i.e.
\begin{equation}\label{eq2}
\left\{
\begin{aligned}
&u_t+u\cdot \nabla u+\nabla p=-\nabla d \cdot \Delta d ,
& (x, t)\in \Omega \times (0, T),\\
&d_t+u\cdot \nabla d=\Delta d+|\nabla d|^2 d,
& (x, t)\in \Omega \times (0, T),\\
&{\rm div} u=0,
& (x, t)\in \Omega \times (0, T),\\
\end{aligned}
\right.
\end{equation}
with the boundary condition
\begin{equation}\label{bc3}
u\cdot n=0,
\quad {\rm and}\quad
\frac{\partial d}{\partial n}=0,
\quad {\rm on}~\partial \Omega.
\end{equation}

When $d$ is a constant vector field, the systems \eqref{eq1} and \eqref{eq2}
are the well-known Navier-Stokes equations and Euler equations respectively.
There is lots of literature on the uniform bounds and the vanishing viscosity
limit for the Navier-Stokes equations when the domain has no boundaries,
for instance, \cite{{Constantin}, {Constantin-Foias},{Kato}, {Masmoudi}}.
The problem is that due to the presence of a boundary the time of existence
$T^\varepsilon$ depends on the viscosity,
and one often cannot prove that it stays bounded away from zero.
Nevertheless, in
a domain with boundaries, for some special types of Navier-slip boundary conditions
or boundaries, some uniform $H^3$ (or $W^{2,p}$, with $p$ large enough) estimates and
a uniform time of existence for Navier-Stokes when the viscosity goes to zero
have recently been obtained (see \cite{{Beira1},{Beira2},{Xiao-Xin}}).
It is easy to see that, for these special
boundary conditions, the main part of the boundary layer vanishes, which allows
this uniform control in some limited regularity Sobolev space.
Recently, Masmoudi and Rousset \cite{Masmoudi-Rousset}
established conormal uniform estimates for three-dimensional general
smooth domains with the Naiver-slip boundary condition, which, in particular,
implies the uniform boundedness of the normal first order derivatives of the velocity
field. This allows the authors \cite{Masmoudi-Rousset}
to obtain the convergence of the viscous solutions to the inviscid ones
by a compact argument. Based on the uniform estimates in \cite{Masmoudi-Rousset},
better convergence with rates have been studied in \cite{Gie-Kelliher}
and \cite{Xiao-Xin2}. In particular, Xiao
and Xin \cite{Xiao-Xin2} have proved the convergence in $L^\infty(0, T ;H^1)$
with a rate of convergence.
Recently, Wang et al. \cite{Wang-Xin-Yong} investigated the
inviscid limit for isentropic compressible
Navier-Stokes equation with general Navier-slip boundary conditions.
The authors in \cite{Wang-Xin-Yong} not only proved that solution
is uniformly bounded  in a conormal Sobolev space and is uniformly bounded in
$W^{1, \infty}$, but also obtained the convergence rate of viscous solutions
to the inviscid ones. As the vanishing viscosity limit for the full compressible
Navier-Stokes equations with general Navier-slip and Neumann boundary conditions,
the readers can refer to \cite{Wang}.

Before stating our main results, we first explain the notations and conventions
used throughout this paper. Similar to \cite{{Masmoudi-Rousset},{Wang-Xin-Yong}},
one assumes that the bounded domain $\Omega \subset \mathbb{R}^3$ has a covering that
\begin{equation}
\Omega \subset \Omega_0 \cup_{k=1}^n \Omega_k,
\end{equation}
where $\overline{\Omega}_0$, and in each $\Omega_k$ there exists a function
$\psi_k$ such that
\begin{equation*}
\begin{aligned}
&\Omega \cap \Omega_k =\{x=(x_1, x_2, x_3)| x_3 > \psi_k(x_1, x_2)\}\cap \Omega_k,\\
&\partial \Omega \cap \Omega_k=\{x_3=\psi_k(x_1, x_2)\}\cap \Omega_k.
\end{aligned}
\end{equation*}
Here, $\Omega$ is said to be $\mathcal{C}^m$ if the functions $\psi_k$
are a $\mathcal{C}^m-$function. To define the conormal Sobolev spaces,
one considers $(Z_k)_{1\le k \le N}$ to be a finite set of generators of vector fields
that are tangential to $\partial \Omega$, and sets
\begin{equation*}
H_{co}^m=\{f\in L^2(\Omega)|Z^I f\in L^2(\Omega), ~ {\rm for}~|I|\le m\},
\end{equation*}
where $I=(k_1,..., k_m)$. The following notations will be used
$$
\begin{aligned}
&\|u\|_m^2=\|u\|_{H^m_{co}}^2=\sum_{j=1}^3 \sum_{|I|\le m}\|Z^I u_j\|_{L^2}^2,\\
&\|u\|_{m, \infty}^2=\sum_{|I|\le m}\|Z^I u\|_{L^\infty}^2,
\end{aligned}
$$
and
$$
\|\nabla Z^m u\|^2=\sum_{|I|=m}\|\nabla Z^I u\|_{L^2}^2.
$$
Noting that by using the covering of $\Omega$, one can always assume that each
vector field $(u, d)$ is supported in one of the $\Omega_i$, and moreover,
in $\Omega_0$ the norm $\|\cdot\|_m$ yields a control of the standard $H^m$ norm,
whereas if $\Omega_i \cap \partial \Omega\neq\emptyset$, there is no control of the
normal derivatives.

Since $\partial \Omega$ is given locally by $x_3=\psi(x_1, x_2)$
(we omit the subscript $j$ of notational convenience), it is convenient to
use the coordinates
$$
\Psi:~(y, z)\mapsto (y, \psi(y)+z)=x.
$$
A  basis is thus given by the vector fields $(e_{y^1}, e_{y^2}, e_z)$,
where $e_{y^1}=(1, 0, \partial_1 \psi)^t,~e_{y^1}=(0, 1, \partial_2 \psi)^t$,
and $e_z=(0, 0, -1)^t$. On the boundary, $e_{y^1}$ and $e_{y^2}$ are tangent to
$\partial \Omega$, and in general, $e_z$ is not a normal vector field.
By using this parametrization, one can take as suitable vector fields compactly
supported in $\Omega_j$ in the definition of the $\|\cdot \|_m$ norms
\begin{equation*}
Z_i=\partial_{y^i}=\partial_i+\partial_i \psi \partial_z,~i=1,2,~~
Z_3=\varphi(z)\partial_z,
\end{equation*}
where $\varphi(z)=\frac{z}{1+z}$ is smooth, supported in $\mathbb{R}_+$
with the property $\varphi(0)=0, \varphi'(0)>0, \varphi(z)>0$ for $z>0$.
It is easy to check that
$$
Z_k Z_j=Z_j Z_k,~~j, k =1,2,3,
$$
and
$$
\partial_z Z_i=Z_i \partial_z,~i=1,2;\quad \partial_z Z_3\neq Z_3 \partial_z.
$$
We shall still denote by $\partial_j,~j=1,2,3,$ or
$\nabla$ the derivatives in the physical space. The coordinates of a vector field
$u$ in the basis $(e_{y^1}, e_{y^2}, e_z)$ will be denoted by $u^i$, and thus
$$
u=u^1 e_{y^1}+u^2 e_{y^2}+u^3 e_{z}.
$$
We shall denote by $u_j$ the coordinates in the standard basis of $\mathbb{R}^3$,
i.e, $u=u_1 e_1+u_2 e_2+u_3 e_3$. Denote by $n$ the unit outward normal in the
physical space which is given locally by
\begin{equation}
n(x)\equiv n(\Psi(y,z))=\frac{1}{\sqrt{1+|\nabla \psi(y)|^2}}
\left(
\begin{array}{c}
\partial_1 \psi(y)\\
\partial_2 \psi(y)\\
-1
\end{array}
\right)
\triangleq \frac{-N(y)}{\sqrt{1+|\nabla \psi(y)|^2}},
\end{equation}
and by $\Pi$ the orthogonal projection
\begin{equation}
\Pi u\equiv\Pi(\Psi(y,z))u=u-[u\cdot n(\Psi(y,z))]n(\Psi(y,z)),
\end{equation}
which gives the orthogonal projection on to the tangent space of the boundary.
Note that $n$ and $\Pi$ are defined in the whole $\Omega_k$ and do not depend on $z$.
For later use and notational convenience, set
\begin{equation*}
\mathcal{Z}^\alpha=\partial_t^{\alpha_0}Z^{\alpha_1}
=\partial_t^{\alpha_0}Z_1^{\alpha_{11}}Z_2^{\alpha_{12}}Z_3^{\alpha_{13}},
\end{equation*}
where $\alpha, \alpha_0$ and  $\alpha_1$ are the differential multi-indices with
$\alpha=(\alpha_0, \alpha_1), \alpha_1=(\alpha_{11}, \alpha_{12}, \alpha_{13})$
and we also use the notation
\begin{equation}\label{sdef2}
\|f(t)\|_{\mathcal{H}^{m}}^2
=\sum_{|\alpha|\le m}\|\mathcal{Z}^\alpha f(t)\|_{L^2_x}^2,
\end{equation}
and
\begin{equation}\label{sdef3}
\|f(t)\|_{\mathcal{H}^{k,\infty}}
=\sum_{|\alpha|\le k}\|\mathcal{Z}^\alpha f(t)\|_{L^\infty_x}
\end{equation}
for smooth space-time function $f(x,t)$.
Throughout this paper, the positive generic constants that are independent
of $\varepsilon$ are denoted by $c, C$.
Denote by $C_k$ a positive constant independent of $\varepsilon \in (0, 1]$
which depends only on the $\mathcal{C}^k-$norm of the functions $\psi_j,~j=1,...,n.$
$\|\cdot\|$ denotes the standard $L^2(\Omega; dx)$ norm,
and $\|\cdot\|_{H^m}(m=1,2,3,...)$ denotes the Sobolev $H^m(\Omega, dx)$ norm.
The notation $|\cdot|_{H^m}$ will be used for the standard Sobolev norm
of functions defined on $\partial \Omega$. Note that this norm involves only
tangential derivatives. $P(\cdot)$ denotes a polynomial function.

Since the boundary layers may appear in the presence of physical boundaries,
in order to obtain the uniform estimation for solutions to the nematic liquid
crystal flows with Navier-slip and Neumann boundary conditions, we needs to find a
suitable functional space. Indeed, it is impossible to get a uniform bound
for $(u, d)$ in a standard Sobolev spaces due to possible boundary layers.
In order to overcome such a difficulty, Masmoudi and Rousset \cite{Masmoudi-Rousset}
used conormal Sobolev  space to investigate the uniform regularity of the solution
for the incompressible Navier-Stokes equations.
More precisely, they defined the functional space
\begin{equation*}
\underset{0\le t\le T}{\sup}\{\|u(t)\|_{H^m_{co}}^2+\|\nabla u(t)\|_{H^{m-1}_{co}}^2
+\|\nabla u\|_{1,\infty}^2\},
\end{equation*}
which implies that the solutions are uniform bounded in $W^{1, \infty}$.
Note that the nematic liquid crystal flows
\eqref{eq1} is a coupling between the incompressible Navier-Stokes equations
and a transported heat flow of harmonic maps into $S^2$.
Then, in the spirit of Masmoudi and Rousset \cite{Masmoudi-Rousset},
we also investigate the solutions of the nematic liquid crystal flows
in Conormal Sobolev space.
Hence, the functional space should include some information for the
direction field $d$. On the other hand, due to the nonlinear higher
order derivatives term $\nabla d\cdot \Delta d$, one should control this term
by using the dissipative term $\Delta d$ on the right hand side of the equation \eqref{eq1}$_2$
which involving the time derivatives term $d_t$. Hence, we also include some
information involving the time derivatives in the functional space.
Therefore, we define the functional space $X_m(T)$ for a pair of function
$(u, d)(x, t)$ as follows
\begin{equation}
X_m(T)=\{(u, d)\in L^\infty([0, T], L^2);
       ~\underset{0\le t\le T}{\rm esssup}\|(u, d)(t)\|_{X_m}<+\infty\},
\end{equation}
where the norm $\|(\cdot, \cdot)\|_{X_m}$ is given by
\begin{equation}
\|(u, d)(t)\|_{X_m}
=\|u\|_{\mathcal{H}^{m}}^2
       +\|d\|_{L^2}^2
       +\|\nabla d\|_{\mathcal{H}^{m}}^2
       +\|\nabla u\|_{\mathcal{H}^{m-1}}^2
       +\|\Delta d\|_{\mathcal{H}^{m-1}}^2
       +\|\nabla u\|_{\mathcal{H}^{1,\infty}}^2.
\end{equation}
In the present paper, we supplement the nematic liquid crystal flows system \eqref{eq1}
with initial data
\begin{equation}\label{ID}
(u^\varepsilon, d^\varepsilon)(x, 0)=(u^\varepsilon_0, d^\varepsilon_0)(x),
\end{equation}
and
\begin{equation}\label{IDB}
\begin{aligned}
\underset{0< \varepsilon \le 1}{\sup}\|(u^\varepsilon_0, d^\varepsilon_0)\|_{X_m}
=
&\underset{0< \varepsilon \le 1}{\sup}\{\|u_0^\varepsilon\|_{\mathcal{H}^{m}}^2
       +\|d_0^\varepsilon\|_{L^2}^2
       +\|\nabla d_0^\varepsilon\|_{\mathcal{H}^{m}}^2
       +\|\nabla u_0^\varepsilon\|_{\mathcal{H}^{m-1}}^2\\
&\quad  \quad \quad     +\|\Delta d_0^\varepsilon\|_{\mathcal{H}^{m-1}}^2
       +\|\nabla u_0^\varepsilon\|_{\mathcal{H}^{1,\infty}}^2\}\le \widetilde{C}_0,
\end{aligned}
\end{equation}
where $\widetilde{C}_0$ is a positive constant independent of $\varepsilon \in (0,1]$,
and the time derivatives of initial data are defined through the equation \eqref{eq1}.
Thus, the initial data $(u_0^\varepsilon, d_0^\varepsilon)$ is assumed to have a
higher space regularity and compatibilities.
Notice that the a priori estimates in Theorem  \ref{Theoream3.1}
below are obtained in the case that the
approximate solution is sufficiently smooth up to the boundary, and therefore,
in order to obtain a selfconstained result, one needs to assume the approximated
initial data satisfies the boundary compatibilities condition \eqref{bc2}.
For the initial data $(u^\varepsilon_0, d^\varepsilon_0)$ satisfying
\eqref{ID}, it is not clear if there exists an approximate sequences
$(u^{\varepsilon,\delta}_0, d^{\varepsilon,\delta}_0)$
($\delta$ being a regularization parameter) which satisfy the boundary
compatibilities and $\|(u^{\varepsilon,\delta}_0-u^\varepsilon_0, d^{\varepsilon,\delta}_0-d^\varepsilon_0)\|_{X_m}\rightarrow 0$ as
$\delta \rightarrow 0$. Therefore, we set
\begin{equation}
\begin{aligned}
X_{n,m}^{ap}
&=\left\{(u, d)\in H^{4m}(\Omega)\times H^{4(m+1)}(\Omega)\right.
|\partial_t^k u, \partial_t d, \partial_t^k \nabla d,
k=1,...,m{\rm~ are~ defined ~through}\\
&\quad \quad \quad \quad \quad \quad \quad \quad \quad \quad \quad \quad
\quad \quad
{\rm equations}~\eqref{eq1}~
{\rm ~the~and}~\partial_t^k u, \partial_t^k \nabla d, k=0,...,m-1\\
&\quad \quad \quad \quad \quad \quad \quad \quad \quad \quad \quad \quad
\quad \quad
{\rm satisfy~the~boundary~compatibility~condition}~\}
\end{aligned}
\end{equation}
and
\begin{equation}
X_{n,m}={\rm~ the ~closure~ of~}X_{n,m}^{ap}~{\rm in~ the~ norm~}\|(\cdot,\cdot)\|_{X_m}.
\end{equation}

Now, we state the first results concerning the uniform regularity for the nematic
liquid crystal flows \eqref{eq1}, \eqref{bc2} and \eqref{ID} as follows.

\begin{theo}[{\textbf {Uniform Regularity}}]\label{Theorem1.1}
Let $m$ be an integer satisfying $m \ge 6$, $\Omega$ be a $\mathcal{C}^{m+2}$ domain,
and $A\in C^{m+1}(\partial \Omega)$. Consider the initial data
$(u_0^\varepsilon, d_0^\varepsilon)\in X_{n,m}$
satisfy \eqref{IDB} and ${\rm div}u^\varepsilon_0=0$,~$|d^\varepsilon_0|=1$
in $\overline{\Omega}$.
Then, there exists a time $T_0>0$ and $\widetilde{C}_1>0$ independent of
$\varepsilon \in (0, 1]$, such that there exits a unique solution
of \eqref{eq1}, \eqref{bc2} and \eqref{ID} which is defined on $[0, T_0]$
and satisfies the estimate
\begin{equation}\label{111}
\begin{aligned}
&\underset{0\le \tau \le t}{\sup}
(\|d^\varepsilon\|_{L^2}^2+\|(u^\varepsilon, \nabla d^\varepsilon)\|_{\mathcal{H}^{m}}^2
+\|(\nabla u^\varepsilon, \Delta d^\varepsilon)\|_{\mathcal{H}^{m-1}}^2
+\|\nabla u^\varepsilon\|_{\mathcal{H}^{1,\infty}}^2)\\
&\quad  +\varepsilon \!\!\int_0^t\!\! (\|\nabla u^\varepsilon\|_{\mathcal{H}^{m}}^2+
\|\nabla^2 u^\varepsilon\|_{\mathcal{H}^{m-1}}^2) d\tau
+\!\!\int_0^t \!\!(\|\Delta d^\varepsilon\|_{\mathcal{H}^{m}}^2
+\|\nabla \Delta d^\varepsilon\|_{\mathcal{H}^{m-1}}^2 )d\tau
\le \widetilde{C}_1,
\end{aligned}
\end{equation}
where $\widetilde{C}_1$ depends only on $\widetilde{C}_0$ and $C_{m+2}$.
\end{theo}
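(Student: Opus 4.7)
The overall plan is the standard a priori estimates plus continuation argument: first, for each fixed $\varepsilon \in (0,1]$, a classical local-in-time solution exists on some interval $[0,T^\varepsilon]$ (say by a Galerkin or iteration scheme in high-regularity Sobolev spaces, using the compatibility conditions built into $X_{n,m}^{ap}$); second, one establishes a closed set of a priori estimates in the conormal norm $X_m$ on any time interval where the solution exists, of the schematic form $\frac{d}{dt}\|(u^\varepsilon,d^\varepsilon)\|_{X_m}^2 + \mathcal{D}(t) \le P(\|(u^\varepsilon,d^\varepsilon)\|_{X_m})$, where $\mathcal{D}(t)$ contains the two dissipative integrals appearing in \eqref{111}; third, a standard continuity argument yields a common lifespan $T_0 > 0$ and the bound $\widetilde{C}_1$. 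Preservation of $|d^\varepsilon|=1$ is obtained by taking the inner product of the $d$-equation with $d^\varepsilon$ and using $|d_0^\varepsilon|=1$.

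For the conormal energy estimates, I would apply $\mathcal{Z}^\alpha$ with $|\alpha|\le m$ to both equations. Testing the $u$-equation against $\mathcal{Z}^\alpha u^\varepsilon$ produces the time derivative of $\|u^\varepsilon\|_{\mathcal{H}^m}^2$, the viscous dissipation $\varepsilon\|\nabla u^\varepsilon\|_{\mathcal{H}^m}^2$ (after carefully integrating by parts and using the Navier boundary condition in the vorticity form \eqref{bc2}, which produces only tangential boundary integrals controlled by $|Bu^\varepsilon|_{L^2(\partial\Omega)}$), plus a host of commutator terms from $[\mathcal{Z}^\alpha,u^\varepsilon\cdot\nabla]$ and commutators of $\mathcal{Z}^\alpha$ with the pressure operator; these are handled with Moser-type inequalities adapted to the conormal setting, as in Masmoudi--Rousset. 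For the $d$-equation I would test $\mathcal{Z}^\alpha\nabla d^\varepsilon$ against $\mathcal{Z}^\alpha\nabla d^\varepsilon$-related quantities, integrating by parts to exploit the full parabolic dissipation $\int_0^t\|\Delta d^\varepsilon\|_{\mathcal{H}^m}^2 d\tau$, which is crucial because $d$ is genuinely parabolic. The Neumann condition $\partial_n d^\varepsilon=0$ means no boundary terms appear at the first derivative level.

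The normal-derivative pieces $\|\nabla u^\varepsilon\|_{\mathcal{H}^{m-1}}$ and $\|\Delta d^\varepsilon\|_{\mathcal{H}^{m-1}}$ cannot be obtained from the conormal estimates alone. For $u^\varepsilon$ I would use the vorticity $\omega^\varepsilon=\nabla\times u^\varepsilon$ and the fact that near the boundary, in the coordinates $(y,z)$, the normal derivative $\partial_z u^\varepsilon$ is controlled by the conormal derivatives of $u^\varepsilon$, the tangential part of $\omega^\varepsilon$, and $\mathrm{div}\, u^\varepsilon = 0$; the vorticity equation
\begin{equation*}
\p_t \omega^\varepsilon + u^\varepsilon\cdot\nabla\omega^\varepsilon - \varepsilon\Delta\omega^\varepsilon = \omega^\varepsilon\cdot\nabla u^\varepsilon - \nabla\times(\nabla d^\varepsilon\cdot\Delta d^\varepsilon)
\end{equation*}
with the boundary condition $n\times\omega^\varepsilon=[Bu^\varepsilon]_\tau$ yields conormal estimates on $\omega^\varepsilon$ uniformly in $\varepsilon$. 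For $\Delta d^\varepsilon$ one uses the $d$-equation to write $\Delta d^\varepsilon = \p_t d^\varepsilon + u^\varepsilon\cdot\nabla d^\varepsilon - |\nabla d^\varepsilon|^2 d^\varepsilon$ and estimate via conormal norms of time derivatives (hence the inclusion of $\p_t$ in the definition of $\mathcal{Z}^\alpha$). Finally, the $W^{1,\infty}$ bound $\|\nabla u^\varepsilon\|_{\mathcal{H}^{1,\infty}}$ is obtained, as in Masmoudi--Rousset, via a maximum principle / pointwise estimate on the vorticity equation after peeling off the boundary layer profile.

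The main obstacle, and what distinguishes the proof from the Navier--Stokes case, is the nonlinear coupling $\nabla d^\varepsilon\cdot\Delta d^\varepsilon$ on the right-hand side of the momentum equation together with the $|\nabla d^\varepsilon|^2 d^\varepsilon$ nonlinearity in the director equation. The term $\nabla d^\varepsilon\cdot\Delta d^\varepsilon$ is quadratic in highest-order derivatives of $d^\varepsilon$ and at first glance seems uncontrollable when testing with $\mathcal{Z}^\alpha u^\varepsilon$; the key is to use the full parabolic dissipation $\int_0^t \|\nabla\Delta d^\varepsilon\|_{\mathcal{H}^{m-1}}^2 d\tau$ and $\int_0^t \|\Delta d^\varepsilon\|_{\mathcal{H}^m}^2 d\tau$ to absorb it, after suitable integration by parts that exploits the structure $\nabla d^\varepsilon\cdot\Delta d^\varepsilon = \mathrm{div}(\nabla d^\varepsilon\otimes\nabla d^\varepsilon) - \tfrac{1}{2}\nabla|\nabla d^\varepsilon|^2$, the gradient part being absorbed into the pressure. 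This trade-off between the coupling term and the parabolic $d$-dissipation is precisely why $\|\nabla d^\varepsilon\|_{\mathcal{H}^m}$ must be tracked at the same level as $\|u^\varepsilon\|_{\mathcal{H}^m}$ in the norm $X_m$, and why the estimates must be closed simultaneously rather than sequentially.
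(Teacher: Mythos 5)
Your overall architecture matches the paper's: conormal energy estimates for $(u^\varepsilon,\nabla d^\varepsilon)$, a separate estimate for the normal derivatives via a vorticity-type quantity, $L^\infty$ bounds, and an iteration-plus-compactness local existence argument combined with a continuation argument on the uniform lifespan. However, two of the steps as you describe them would fail, and they are precisely the places where the paper has to work.

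First, the normal-derivative estimate. You claim that the vorticity equation ``with the boundary condition $n\times\omega^\varepsilon=[Bu^\varepsilon]_\tau$ yields conormal estimates on $\omega^\varepsilon$ uniformly in $\varepsilon$.'' A direct energy estimate on $\mathcal{Z}^\alpha\omega^\varepsilon$ does not close: integrating $-\varepsilon\Delta\omega^\varepsilon$ against $\mathcal{Z}^\alpha\omega^\varepsilon$ by parts leaves the boundary term $\varepsilon\int_{\partial\Omega}\partial_n\mathcal{Z}^\alpha\omega^\varepsilon\cdot\mathcal{Z}^\alpha\omega^\varepsilon\,d\sigma$, which involves two normal derivatives of $u^\varepsilon$ at the boundary and is exactly the boundary-layer contribution that cannot be controlled uniformly in $\varepsilon$. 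The paper, following Masmoudi--Rousset, instead introduces $\eta=\chi\{\omega^\varepsilon\times n-\Pi(Bu^\varepsilon)\}$, which by \eqref{bc2} satisfies a \emph{homogeneous} Dirichlet condition, so that no boundary term appears in the integration by parts; the price is a collection of commutator source terms ($F_3$, $F_4$, $F^\kappa$ in Sections 3.3 and 3.5) that must be estimated. Without this replacement (or an equivalent device) your step is not a proof. The same device, transplanted to normal geodesic coordinates and combined with a maximum-principle lemma for a convection--diffusion equation with homogeneous Dirichlet data, is what gives $\|\nabla u^\varepsilon\|_{\mathcal{H}^{1,\infty}}$; ``peeling off the boundary layer profile'' is not what happens there.

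Second, the pressure. Since $\mathcal{Z}^\alpha u^\varepsilon\cdot n$ does not vanish on $\partial\Omega$ for purely tangential $\mathcal{Z}^\alpha$, the term $\int\mathcal{Z}^\alpha\nabla p^\varepsilon\cdot\mathcal{Z}^\alpha u^\varepsilon\,dx$ leaves a genuine boundary integral $\int_{\partial\Omega}\mathcal{Z}^\alpha p^\varepsilon\,\mathcal{Z}^\alpha u^\varepsilon\cdot n\,d\sigma$ that is not a commutator to be absorbed by Moser inequalities. The paper splits $p^\varepsilon=p_1+p_2$ with $p_1$ solving the Euler-type Neumann problem and $p_2$ harmonic with $\partial_np_2=\varepsilon\Delta u^\varepsilon\cdot n$, gaining a factor of $\varepsilon$ in $\|\nabla p_2\|_{\mathcal{H}^{m-1}}$ that is essential to close \eqref{3201}. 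Your proposed rewriting $\nabla d^\varepsilon\cdot\Delta d^\varepsilon=\mathrm{div}(\nabla d^\varepsilon\otimes\nabla d^\varepsilon)-\tfrac12\nabla|\nabla d^\varepsilon|^2$ is a legitimate alternative normalization of the pressure (the paper instead keeps $\nabla d^\varepsilon\cdot\Delta d^\varepsilon$ and absorbs $\mathcal{Z}^\alpha(\nabla d^\varepsilon\cdot\Delta d^\varepsilon)$ by a small multiple of $\int_0^t\|\Delta d^\varepsilon\|_{\mathcal{H}^m}^2$, as in \eqref{3225}), but it does not address this boundary issue. A minor further point: the dissipation $\int_0^t\|\nabla\Delta d^\varepsilon\|_{\mathcal{H}^{m-1}}^2\,d\tau$ in \eqref{111} does not follow from writing $\Delta d^\varepsilon$ through the equation; it requires its own energy estimate, obtained by testing $\mathcal{Z}^\alpha$ of the gradient of the director equation against $\nabla\mathcal{Z}^\alpha\Delta d^\varepsilon$ and using the Neumann condition to handle the boundary terms, as in the paper's Lemmas 3.3 and 3.4 for $\Delta d$.
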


\begin{rema}
For $(u_0^\varepsilon, d_0^\varepsilon)\in X_{n,m}$, it must hold that
$u_0^\varepsilon \cdot n|_{\partial \Omega}=0$,
$((Su_0^\varepsilon)n)_\tau|_{\partial \Omega}=-(A u_0^\varepsilon)_\tau|_{\partial \Omega}$,
and $n\cdot \nabla d_0^\varepsilon|_{\partial \Omega}=0$
in the trace sense for every fixed $\varepsilon \in (0, 1]$.
\end{rema}

\begin{rema}
Indeed, the nematic liquid crystal flows \eqref{eq1} has the following form
\begin{equation}\label{req1}
\left\{
\begin{aligned}
&u_t+u\cdot \nabla u+\nabla p
=\varepsilon \Delta u
-\lambda\nabla d \cdot \Delta d,
& (x, t)\in \Omega \times (0, T),\\
&d_t+u\cdot \nabla d
=\theta(\Delta d+|\nabla d|^2 d),
& (x, t)\in \Omega \times (0, T),\\
&{\rm div} u=0,
& (x, t)\in \Omega \times (0, T).
\end{aligned}
\right.
\end{equation}
Here the parameters $\varepsilon$, $\lambda$ and $\theta$ are positive constants representing the fluid viscosity, the competition between kinetic energy and potential energy, and the macroscopic elastic relaxation time for the molecular orientation field respectively.
If $\theta=0$, system \eqref{req1} for $(u, d)$ is
closely related to the MHD system for $(u,\psi)$ provided we identify $\psi=\nabla \times d$.
There have been many interesting works on global small solutions to the MHD
system recently, refer to \cite{{Lin-Zhang1},{Lin-Zhang2},{Lin-Zhang-Xu}}.
Due to the nonlinear term $\lambda \nabla d\cdot \Delta d$ on the right hand side of
\eqref{req1}$_1$, we can not obtain the uniform regularity estimates for the case that
both $\varepsilon$ and $\theta$ go to zero in this paper.
\end{rema}

The main steps of the proof of Theorem \ref{Theorem1.1} are the following.
First, we obtained a conormal energy estimates for $(u^\varepsilon, \nabla d^\epsilon)$
in $\mathcal{H}^{m}-$norm(see \eqref{sdef2} above for the definition
of $\mathcal{H}^{m}$).
The second step is to give the estimates for $\|\Delta d^\varepsilon\|_{\mathcal{H}^{m-1}}$,
which is easy to obtain since there exists a dissipative term $\Delta d^\varepsilon$
on the right-hand side of \eqref{eq1}$_2$.
The third step is to give the estimate for $\|\partial_n u^\varepsilon\|_{\mathcal{H}^{m-1}}$
and $\|\Delta d^\varepsilon\|_{\mathcal{H}^{m-1}}$.
In order to obtain this estimate by an energy method, $\partial_n u^\varepsilon$a
is not a convenient quantity because it does not vanish on the boundary.
Similar to \cite{Masmoudi-Rousset}, the quantity $\partial_n u^\varepsilon \cdot n$
can be controlled thanks to the divergence free condition of $u^\varepsilon$.
In order to give the estimate for $(\partial_n u^\varepsilon)_\tau$, one choose
the convenient quantity $\eta=w^\varepsilon \times n+(Bu^\varepsilon)_\tau$
with a homogeneous Dirichlet boundary conditions.
The fourth step is to give the estimate for the pressure, which is not transparent
in the estimate since the conormal fields $Z_i$ do not communicate with the
gradient operator. In the spirit of \cite{Masmoudi-Rousset}, we split
the pressure into two parts, the first part have the same regularity as in the
Euler equation and the second part is linked to the Navier condition.
The fifth step is to estimate $\|\Delta d^\varepsilon\|_{W^{1,\infty}}$.
Indeed, this estimate is easy to obtain since there exists a dissipation term $\Delta d^\varepsilon$ on the right-hand side of \eqref{eq1}$_2$.
The last step is to estimate $\|\nabla u^\varepsilon\|_{\mathcal{H}^{1,\infty}}$.
In fact, it suffices to estimate $\|(\partial_n u^\varepsilon)_\tau\|_{\mathcal{H}^{1,\infty}}$
since the other terms can be estimated by the Sobolev embedding.
We choose an equivalent quantity such that it satisfies a homogeneous Dirichlet
condition and solves a convection-diffusion equation at the leading order.
Then Theorem \ref{Theorem1.1} can be proved by these a priori estimates and
a classical iteration method.

Based on the uniform estimates in Theorem \ref{Theorem1.1},
in the spirit of similar arguments \cite{Masmoudi-Rousset},
we can obtain the vanishing viscosity limit of solutions of
\eqref{eq1} to the solutions of \eqref{eq2} in $L^\infty-$norm
by the strong compactness argument, without convergence rate.
In this paper, we hope to prove the vanishing viscosity limit with
rates of convergence, which can be stated as follows.

\begin{theo}[{\textbf{Inviscid Limit}}]\label{Theorem1.2}
Let $(u, d)(t)\in L^\infty(0, T_1; H^3)\times L^\infty(0, T_1; H^4)$
be the smooth solution to the equation
\eqref{eq2} and \eqref{bc2} with initial data $(u_0, d_0)$ satisfying
\begin{equation}\label{121}
(u_0, d_0)\in (H^3 \times H^4) \cap X_{n,m}~{\rm with}~m \ge 6.
\end{equation}
Let $(u^\varepsilon, d^\varepsilon)(t)$ be the solution to the initial boundary
value problem of the nematic liquid crystal flows \eqref{eq1}, \eqref{bc1}
with initial data $(u_0, d_0)$ satisfying \eqref{121}.
Then, there exists $T_2={\rm min}\{T_0, T_1\}>0$, which is independent of
$\varepsilon>0$, such that
\begin{equation}\label{122}
\|(u^\varepsilon-u)(t)\|_{L^2}^2+\|(d^\varepsilon-d)(t)\|_{H^1}^2
\le C\varepsilon^{\frac{3}{2}}, \quad t\in [0, T_2]
\end{equation}
and
\begin{equation}\label{123}
\|(u^\varepsilon-u)\|_{L^\infty(0, T_2; L^\infty(\Omega))}
+\|(d^\varepsilon-d)\|_{L^\infty(0,T_2; W^{1,\infty}(\Omega))}
\le C \varepsilon^{\frac{3}{10}},
\end{equation}
which $C$ depends only on the norm $\|u_0\|_{H^3}, \|d_0\|_{H^4}$
and $\|(u_0, d_0)\|_{X_m}$.
\end{theo}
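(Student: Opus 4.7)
The plan is to derive error equations for $(U,D)=(u^\varepsilon-u,d^\varepsilon-d)$, prove a combined $L^{2}$--$H^{1}$ energy estimate that controls $\|U\|^{2}+\|D\|_{H^{1}}^{2}$ by $C\varepsilon^{3/2}$ via Gronwall, and then upgrade to the $L^{\infty}$ rate \eqref{123} via Gagliardo--Nirenberg interpolation using the uniform $W^{1,\infty}$-type bounds from Theorem~\ref{Theorem1.1}. The principal obstacle throughout is the viscous boundary term $\varepsilon\int_{\partial\Omega}(\partial_{n}U)_{\tau}\cdot U_{\tau}\,dS$, which would only yield the weaker rate $\|U\|^{2}=O(\varepsilon)$ without a careful trace-plus-Young argument.

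Subtracting \eqref{eq2} from \eqref{eq1} gives, with $P=p^{\varepsilon}-p$,
\begin{align*}
&U_{t}+u^{\varepsilon}\cdot\nabla U+U\cdot\nabla u+\nabla P=\varepsilon\Delta u^{\varepsilon}-(\nabla d^{\varepsilon}\cdot\Delta d^{\varepsilon}-\nabla d\cdot\Delta d),\\
&D_{t}+u^{\varepsilon}\cdot\nabla D+U\cdot\nabla d=\Delta D+(|\nabla d^{\varepsilon}|^{2}d^{\varepsilon}-|\nabla d|^{2}d),
\end{align*}
with $\operatorname{div}U=0$, $U\cdot n=0$, $\partial_{n}D=0$ and $(U,D)|_{t=0}=0$. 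Testing the first equation against $U$ and the second against $D-\Delta D$, the convective and pressure contributions vanish (by $\operatorname{div}U=0$ and $U\cdot n=u^{\varepsilon}\cdot n=0$) or are absorbed into $C(\|U\|^{2}+\|D\|_{H^{1}}^{2})$ using the uniform $W^{1,\infty}$-bounds for $u^{\varepsilon},d^{\varepsilon}$ (from Theorem~\ref{Theorem1.1}) and for $u,d$ (via $H^{3}\hookrightarrow W^{1,\infty}$, $H^{4}\hookrightarrow W^{2,\infty}$ in three dimensions). The coupling term splits as $\nabla d^{\varepsilon}\cdot\Delta d^{\varepsilon}-\nabla d\cdot\Delta d=\nabla D\cdot\Delta d^{\varepsilon}+\nabla d\cdot\Delta D$ and gives $\le C(\|U\|^{2}+\|\nabla D\|^{2})+\tfrac{1}{4}\|\Delta D\|^{2}$ after Cauchy--Schwarz (using $\|\Delta d^{\varepsilon}\|_{L^{\infty}}\le C$ from the proof of Theorem~\ref{Theorem1.1}), the last piece being absorbed into the parabolic dissipation. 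The nonlinear term $|\nabla d^{\varepsilon}|^{2}d^{\varepsilon}-|\nabla d|^{2}d$ is treated analogously after expansion.

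The decisive contribution is the viscous term $\varepsilon\int\Delta u^{\varepsilon}\cdot U\,dx$. Splitting $\Delta u^{\varepsilon}=\Delta U+\Delta u$ and integrating by parts, using $U\cdot n=0$,
\[
\varepsilon\int\Delta u^{\varepsilon}\cdot U\,dx=-\varepsilon\|\nabla U\|^{2}+\varepsilon\int\Delta u\cdot U\,dx+\varepsilon\int_{\partial\Omega}(\partial_{n}U)_{\tau}\cdot U_{\tau}\,dS.
\]
The interior correction is $\le\tfrac{1}{4}\|U\|^{2}+C\varepsilon^{2}$. For the boundary term, $(\partial_{n}U)_{\tau}$ is uniformly $L^{\infty}(\partial\Omega)$-bounded (via the Navier-slip \eqref{bc2} and $\|\nabla u^{\varepsilon}\|_{\mathcal{H}^{1,\infty}}\le C$ for $u^{\varepsilon}$, and the $H^{3}$-trace for $u$). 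To improve the naive bound $O(\varepsilon)$ to $O(\varepsilon^{3/2})$, I would apply the trace inequality
\[
\|U\|_{L^{2}(\partial\Omega)}\le C\|U\|_{L^{2}}^{1/2}\bigl(\|U\|_{L^{2}}+\|\nabla U\|_{L^{2}}\bigr)^{1/2}
\]
and then Young's inequality with three factors and exponents $(2,4,4)$ to the splitting
\[
C\varepsilon\|U\|^{1/2}\|\nabla U\|^{1/2}=(C\varepsilon^{3/4})\cdot(\varepsilon^{1/4}\|\nabla U\|^{1/2})\cdot\|U\|^{1/2},
\]
giving $\le C\varepsilon^{3/2}+\tfrac{\varepsilon}{4}\|\nabla U\|^{2}+\tfrac{1}{4}\|U\|^{2}$. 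Combining with the $D$-estimates yields
\[
\tfrac{d}{dt}\bigl(\|U\|^{2}+\|D\|_{H^{1}}^{2}\bigr)+\tfrac{\varepsilon}{2}\|\nabla U\|^{2}+\|\Delta D\|^{2}\le C\bigl(\|U\|^{2}+\|D\|_{H^{1}}^{2}\bigr)+C\varepsilon^{3/2},
\]
and Gronwall with vanishing initial error gives \eqref{122} on $[0,T_{2}]$.

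For the $L^{\infty}$-rate \eqref{123}, I use the three-dimensional Gagliardo--Nirenberg interpolation
\[
\|f\|_{L^{\infty}(\Omega)}\le C\|f\|_{L^{2}}^{2/5}\|\nabla f\|_{L^{\infty}}^{3/5}+C\|f\|_{L^{2}},
\]
applied to $f=U$, $f=D$ and $f=\nabla D$. The uniform bounds $\|\nabla U\|_{L^{\infty}},\|\nabla D\|_{L^{\infty}},\|\nabla^{2}D\|_{L^{\infty}}\le C$ come from Theorem~\ref{Theorem1.1} (supplying $\|\nabla u^{\varepsilon}\|_{\mathcal{H}^{1,\infty}}$, $\|\nabla d^{\varepsilon}\|_{\mathcal{H}^{m}}$ with $m\ge 6$, and $\|\Delta d^{\varepsilon}\|_{W^{1,\infty}}$ through its proof) combined with $H^{3},H^{4}$ Sobolev embedding for $u,d$; in particular, $\|\nabla^{2}d^{\varepsilon}\|_{L^{\infty}}$ is reconstructed from $\partial_{nn}d^{\varepsilon}=\Delta d^{\varepsilon}-\partial_{\tau\tau}d^{\varepsilon}$ using the anisotropic $L^{\infty}$ control of tangential conormal derivatives. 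Inserting $\|U\|_{L^{2}}+\|\nabla D\|_{L^{2}}\le C\varepsilon^{3/4}$ from the first part then produces the advertised rate $\varepsilon^{3/10}$.
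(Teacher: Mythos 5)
Your proposal is correct and follows essentially the same route as the paper: the same error equations, the same $L^{2}$--$H^{1}$ energy functional with the $\varepsilon^{3/2}$ rate extracted from the viscous boundary term via the trace interpolation $|U|_{L^{2}(\partial\Omega)}\lesssim\|U\|^{1/2}\|U\|_{H^{1}}^{1/2}$ followed by Young's inequality, then Gronwall, and finally the Gagliardo--Nirenberg interpolation $\|f\|_{L^\infty}\lesssim\|f\|_{L^2}^{2/5}\|f\|_{W^{1,\infty}}^{3/5}$ against the uniform bounds of Theorem \ref{Theorem1.1} to obtain $\varepsilon^{3/10}$. The only cosmetic difference is that you integrate $\varepsilon\Delta U$ by parts directly and bound $(\partial_n U)_\tau$ on the boundary in $L^\infty$, whereas the paper works with the curl-curl form and the vorticity boundary condition \eqref{bc3}; both reduce to the same estimate of $\varepsilon|U|_{L^2(\partial\Omega)}$.
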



The rest of the paper is organized as follows:
In section \ref{Preliminary}, we collect some inequalities that will be used later.
In section \ref{estimates}, the a priori estimates in Theorem \ref{Theoream3.1}
are proved. By using these a priori estimates, one give the proof for the
Theorem \ref{Theorem1.1} in section \ref{Proof1}.
Based on the uniform estimates obtained in Theorem \ref{Theorem1.1},
we establish the convergence rate for the solutions from \eqref{eq1} to \eqref{eq2}
and complete the proof for the Theorem \ref{Theorem1.2}.

\section{Preliminaries}\label{Preliminary}
\quad The following lemma \cite{{Xiao-Xin},{Temam}} allows us to control
the $H^m(\Omega)$-norm
of a vector valued function $u$ by its $H^{m-1}-$norm of $\nabla \times u$
and ${\rm div}u$, together with the $H^{m-\frac{1}{2}}(\partial \Omega)$
of $u\cdot n$.

\begin{prop}\label{prop2.1}
Let $m \in \mathbb{N}_+$ be an integer. Let $u\in H^m$ be a vector-valued function.
Then, there exists a constant $C>0$ independent of $u$, such that
\begin{equation}\label{21}
\|u\|_{H^m}\le C(\|\nabla \times u\|_{H^{m-1}}+\|{\rm div}u\|_{H^{m-1}}
                 +\|u\|_{H^{m-1}}+|u\cdot n|_{H^{m-\frac{1}{2}}(\partial \Omega)}),
\end{equation}
and
\begin{equation}\label{22}
\|u\|_{H^m}\le C(\|\nabla \times u\|_{H^{m-1}}+\|{\rm div}u\|_{H^{m-1}}
                 +\|u\|_{H^{m-1}}+|n \times u|_{H^{m-\frac{1}{2}}(\partial \Omega)}).
\end{equation}
\end{prop}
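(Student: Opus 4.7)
The plan is to prove \eqref{21}--\eqref{22} via the vector Bochner (Gaffney--Friedrichs) identity combined with induction on $m$. The starting point is the componentwise identity $|\nabla u|^2 = |\nabla\times u|^2 + (\operatorname{div} u)^2 + \partial_j(u_k\partial_k u_j - u_j\partial_k u_k)$, which after integration by parts gives, for smooth $u$,
$$\int_\Omega |\nabla u|^2\,dx = \int_\Omega |\nabla\times u|^2\,dx + \int_\Omega (\operatorname{div} u)^2\,dx + \int_{\partial\Omega}\bigl[(u\cdot\nabla)(u\cdot n) - (u\cdot n)(\operatorname{div} u)\bigr]\,d\sigma + R(u),$$
where $R(u)$ collects curvature contributions that are pointwise quadratic in $u|_{\partial\Omega}$ (no derivatives).

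For the base case $m=1$ of \eqref{21}, I would expand $(u\cdot\nabla)(u\cdot n)$ at the boundary using the decomposition $u=(u\cdot n)n + u_\tau$; after performing tangential integration by parts on the $u_\tau\cdot\nabla_\tau(u\cdot n)$ piece, the boundary integrand is reduced to a sum of terms each containing $u\cdot n$ or its tangential derivatives, plus curvature remainders quadratic in $u_\tau$. The former are controlled by $|u\cdot n|_{H^{1/2}(\partial\Omega)}^2$, while the latter are handled using the standard trace inequality $|v|_{H^{1/2}(\partial\Omega)}^2\lesssim \|v\|_{L^2}\|v\|_{H^1}$. Young's inequality then absorbs a small multiple of $\|u\|_{H^1}^2$ back on the left-hand side, yielding \eqref{21} at $m=1$. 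The argument for \eqref{22} is structurally identical with the roles of the normal and tangential traces swapped; crucially, the boundary form can be rewritten so that the rough part depends only on $n\times u$ modulo the same lower-order remainders.

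For the inductive step from $m-1$ to $m$, I would apply the estimate at level $m-1$ to the derivatives of $u$. Using the covering $\{\Omega_k\}$ and an associated partition of unity, purely tangential derivatives $Z$ near $\partial\Omega$ satisfy $\nabla\times(Zu)=Z(\nabla\times u)+[\nabla\times,Z]u$ and $\operatorname{div}(Zu)=Z(\operatorname{div} u)+[\operatorname{div},Z]u$ where the commutators are of lower differential order, and the trace satisfies $(Zu)\cdot n=Z(u\cdot n)-u\cdot Zn$ so that $|(Zu)\cdot n|_{H^{m-3/2}}$ is controlled by $|u\cdot n|_{H^{m-1/2}}+\|u\|_{H^{m-1}}$ after accounting for curvature. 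Normal derivatives are recovered algebraically: $\partial_n(u\cdot n)$ from $\operatorname{div} u$ minus tangential derivatives of the tangential components, and $\partial_n u_\tau$ from $\nabla\times u$ after analogous manipulation, so every normal-derivative contribution reduces to quantities already controlled in the target inequality.

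The main obstacle is the careful geometric bookkeeping in the base case: isolating the boundary quadratic form so that its ``hard'' part depends on exactly one of $u\cdot n$ or $n\times u$, with the other components appearing only in terms absorbable via interpolation. This is essentially a local computation involving the Weingarten map of $\partial\Omega$, performed on each boundary patch $\Omega_k$ and assembled by partition of unity; once executed, the induction proceeds routinely and the contributions from interior patches $\Omega_0$ reduce to classical interior div-curl estimates with no boundary trace at all.
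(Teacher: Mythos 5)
The paper does not prove Proposition \ref{prop2.1} at all: it is quoted as a known result with citations to Temam and to Xiao--Xin, so there is no ``paper's proof'' to compare against. Judged on its own terms, your proposal is a correct outline of the standard argument that those references use. The pointwise identity $|\nabla u|^2=|\nabla\times u|^2+(\operatorname{div}u)^2+\partial_j(u_k\partial_ku_j-u_j\partial_ku_k)$ is right, and after integration the boundary form does reduce, modulo curvature terms quadratic in $u|_{\partial\Omega}$ with no derivatives, to the bilinear pairing $u_\tau\cdot\nabla_\tau(u\cdot n)-(u\cdot n)\operatorname{div}_\tau u_\tau$ once you substitute $\partial_n(u\cdot n)=\operatorname{div}u-\operatorname{div}_\tau u_\tau+O(|u|)$ on $\partial\Omega$. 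It is worth being explicit that this ``hard part'' is genuinely \emph{bilinear} in $(u\cdot n,\,u_\tau)$ rather than quadratic in one of them: you bound it by $|u\cdot n|_{H^{1/2}(\partial\Omega)}\,|u_\tau|_{H^{1/2}(\partial\Omega)}$ via the $H^{1/2}\times H^{-1/2}$ duality, then apply the trace inequality to whichever factor you do not want in the final estimate and absorb $\delta\|u\|_{H^1}^2$ by Young; this single computation yields both \eqref{21} and \eqref{22} symmetrically, exactly as you indicate. Two points in the inductive step deserve more care than the sketch gives them. First, recovering \emph{all} $m$-th order derivatives requires a double induction (on the total order and on the number of normal derivatives), since e.g.\ $\partial_n^2(u\cdot n)$ is obtained from $\partial_n\operatorname{div}u$ (controlled by $\|\operatorname{div}u\|_{H^{m-1}}$) together with mixed derivatives $\partial_nZ u$ already handled at the previous stage of the inner induction; a single appeal to ``the estimate at level $m-1$ applied to derivatives of $u$'' glosses over this. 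Second, the commutator $[\nabla\times,Z]u$ is of order one in $u$ (not zero), so it is controlled by $\|u\|_{H^{m-1}}$ only because that term sits on the right-hand side of \eqref{21}--\eqref{22}; this is fine but should be said. With those two clarifications the argument is complete and is, as far as one can tell, the same div--curl/Gaffney-type proof as in the cited sources.
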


In this paper, one repeatedly use the Gagliardo-Nirenberg-Morser type inequality,
whose proof can be find in \cite{Gues}. First, define the space
\begin{equation}
W^m(\Omega \times [0, T])=\{f(x,t)\in L^2(\Omega \times [0, T])
|\mathcal{Z}^\alpha f \in L^2(\Omega \times [0, T]), |\alpha|\le m\}.
\end{equation}
Then, the Gagliardo-Nirenberg-Morser type inequality  can be stated as follows:
\begin{prop}\label{prop2.2}
For $u, v \in L^\infty(\Omega \times [0, T])\cap \mathcal{W}^m(\Omega \times [0, T])$
with $m \in \mathbb{N}_+$ an integer, it holds that
\begin{equation}\label{23}
\int_0^t \|(\mathcal{Z}^\beta u \mathcal{Z}^\gamma v)(\tau)\|^2 d\tau
\lesssim \|u\|_{L^\infty_{t,x}}^2\int_0^t \|v(\tau)\|_{\mathcal{H}^m}^2 d\tau
+\|v\|_{L^\infty_{t,x}}^2\int_0^t \|u(\tau)\|_{\mathcal{H}^m}^2 d\tau,
~|\beta|+|\gamma|=m.
\end{equation}
\end{prop}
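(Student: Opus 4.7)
The plan is to reduce the claim to its spatial analog and then to a Gagliardo--Nirenberg--Moser interpolation in the conormal scale. The time integration is a triviality; indeed, once I prove the pointwise-in-$\tau$ statement
\begin{equation*}
\|(\mathcal{Z}^\beta u\,\mathcal{Z}^\gamma v)(\tau)\|_{L^2}^2
\lesssim \|u(\tau)\|_{L^\infty_x}^2\,\|v(\tau)\|_{\mathcal{H}^m}^2
+\|v(\tau)\|_{L^\infty_x}^2\,\|u(\tau)\|_{\mathcal{H}^m}^2,
\qquad |\beta|+|\gamma|=m,
\end{equation*}
the stated inequality follows by bounding $\|u(\tau)\|_{L^\infty_x}\le \|u\|_{L^\infty_{t,x}}$ (and similarly for $v$) and integrating from $0$ to $t$.

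To prove the spatial bound, I would first establish a conormal Gagliardo--Nirenberg inequality: for every multi-index with $|\beta|=k\le m$ and every $f\in L^\infty(\Omega)\cap \mathcal{H}^m(\Omega)$,
\begin{equation*}
\|\mathcal{Z}^\beta f\|_{L^{2m/k}(\Omega)}
\lesssim \|f\|_{L^\infty(\Omega)}^{1-k/m}\,\|f\|_{\mathcal{H}^m(\Omega)}^{k/m}.
\end{equation*}
Using the covering $\Omega\subset \Omega_0\cup\bigcup_{k=1}^n\Omega_k$ and a subordinate partition of unity, I may work chart by chart. In the boundary charts, switching to the coordinates $(y,z)=\Psi^{-1}(x)$ turns the generators $Z_1,Z_2,Z_3$ into smooth tangential fields plus $\varphi(z)\partial_z$; since $\varphi(0)=0$, integration by parts in $z$ produces no boundary contribution. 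An induction on $k$, combined with repeated integrations by parts of the form $\int|\mathcal{Z}^\beta f|^{2p}\,dx$ differentiated in $z$, yields the interpolation inequality; equivalently, one invokes the standard Gagliardo--Nirenberg inequality in the $(y,z)$-variables applied to derivatives of $f$ with respect to the smooth tangential frame.

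Given the interpolation, I apply H\"older with the conjugate pair $(2m/k,\,2m/(m-k))$ where $k=|\beta|$ (so $m-k=|\gamma|$):
\begin{equation*}
\|\mathcal{Z}^\beta u\,\mathcal{Z}^\gamma v\|_{L^2}
\le \|\mathcal{Z}^\beta u\|_{L^{2m/k}}\,\|\mathcal{Z}^\gamma v\|_{L^{2m/(m-k)}}
\lesssim \|u\|_{L^\infty}^{1-k/m}\|u\|_{\mathcal{H}^m}^{k/m}\,
       \|v\|_{L^\infty}^{k/m}\|v\|_{\mathcal{H}^m}^{1-k/m}.
\end{equation*}
The final step is Young's inequality with exponents $(m/(m-k),\,m/k)$ applied to the product
$\bigl(\|u\|_{L^\infty}\|v\|_{\mathcal{H}^m}\bigr)^{(m-k)/m}\bigl(\|v\|_{L^\infty}\|u\|_{\mathcal{H}^m}\bigr)^{k/m}$, which gives
\begin{equation*}
\|\mathcal{Z}^\beta u\,\mathcal{Z}^\gamma v\|_{L^2}
\lesssim \|u\|_{L^\infty}\|v\|_{\mathcal{H}^m}+\|v\|_{L^\infty}\|u\|_{\mathcal{H}^m}.
\end{equation*}
Squaring, integrating in $\tau$, and estimating the $L^\infty_x$ norms by $L^\infty_{t,x}$ norms produces \eqref{23}. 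The edge cases $k=0$ and $k=m$ are immediate from H\"older alone, so only intermediate $k$ need the interpolation.

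The main technical obstacle is the conormal Gagliardo--Nirenberg inequality itself: the generator $Z_3=\varphi(z)\partial_z$ degenerates at the boundary and commutes only up to lower order with $\partial_z$, so one has to track these commutators carefully when performing the integrations by parts that feed the inductive step. Once this interpolation is in hand the rest of the argument is routine H\"older--Young bookkeeping.
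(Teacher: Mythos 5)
The paper does not prove this proposition itself---it cites Gu\`es \cite{Gues}---so there is no in-paper argument to compare line by line; the standard proof is a Moser-type interpolation carried out on the \emph{space-time} domain $\Omega\times[0,t]$, treating $\partial_t$ as one more vector field tangential to $\partial\Omega\times[0,t]$. Your outline follows that template in its H\"older--Young bookkeeping, but your very first reduction is where the argument breaks.

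The gap is the claim that ``the time integration is a triviality'': the fields $\mathcal{Z}^\alpha=\partial_t^{\alpha_0}Z^{\alpha_1}$ contain time derivatives, and the pointwise-in-$\tau$ inequality you propose to prove first is false. Take $m=2$, $u=v=\phi(x)\sin(\lambda t)$ with $\phi$ smooth and nonzero, and $\mathcal{Z}^\beta=\mathcal{Z}^\gamma=\partial_t$. At a time $\tau_0$ with $\sin(\lambda\tau_0)=0$ the left-hand side of your slicewise inequality equals $\lambda^4\|\phi\|_{L^4}^4$, while $\|u(\tau_0)\|_{L^\infty_x}=\|v(\tau_0)\|_{L^\infty_x}=0$, so the right-hand side vanishes. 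Even replacing $\|u(\tau_0)\|_{L^\infty_x}$ by $\|u\|_{L^\infty_{t,x}}$ does not save it: the right-hand side is then $O(\lambda^2)$ as $\lambda\to\infty$ (the $O(\lambda^4)$ term $\|\partial_t^2u(\tau_0)\|_{L^2}^2$ in $\|u(\tau_0)\|_{\mathcal{H}^2}^2$ also vanishes at $\tau_0$), versus $O(\lambda^4)$ on the left. The underlying reason is structural: your conormal Gagliardo--Nirenberg inequality $\|\mathcal{Z}^\beta f\|_{L^{2m/k}(\Omega)}\lesssim\|f\|_{L^\infty(\Omega)}^{1-k/m}\|f\|_{\mathcal{H}^m(\Omega)}^{k/m}$ at a fixed time requires integrating by parts in every variable that $\mathcal{Z}^\beta$ differentiates in, and one cannot integrate by parts in $t$ on a single time slice. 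The repair is to run your entire scheme---interpolation, H\"older with exponents $\bigl(2m/k,\,2m/(m-k)\bigr)$, and Young---over $\Omega\times[0,t]$, with $L^p(\Omega\times[0,t])$ norms throughout and with the endpoint norms $\|f\|_{L^\infty_{t,x}}$ and $\bigl(\int_0^t\|f(\tau)\|_{\mathcal{H}^m}^2\,d\tau\bigr)^{1/2}$; the boundary terms produced by integrating by parts in $t$ are then handled (or avoided by an extension/cutoff in time), and the rest of your argument goes through essentially verbatim.
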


We also need the following anisotropic Sobolev embedding and trace theorems,
refer to \cite{{Wang},{Wang-Xin-Yong}}.
\begin{prop}\label{prop2.3}
Let $m_1 \ge 0, m_2 \ge 0$ be integers and $f\in H_{co}^{m_1}(\Omega)\cap H_{co}^{m_2}(\Omega)$
and $\nabla f\in H_{co}^{m_2}(\Omega)$.\\
$(1)$ The following anisotropic Sobolev embedding holds:
\begin{equation}\label{24}
\|f\|_{L^\infty}^2\le C(\|\nabla f\|_{H^{m_2}_{co}}+\|f\|_{H^{m_2}_{co}})
\cdot \|f\|_{H^{m_1}_{co}},
\end{equation}
provided $m_1+m_2 \ge 3$.\\
$(2)$The following trace estimate holds:\\
\begin{equation}\label{25}
|f|_{H^s(\partial \Omega)}^2 \le  C(\|\nabla f\|_{H^{m_2}_{co}}+\|f\|_{H^{m_2}_{co}})
\cdot \|f\|_{H^{m_1}_{co}},
\end{equation}
provided $m_1+m_2 \ge 2s \ge 0$.
\end{prop}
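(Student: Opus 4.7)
The plan is to prove Proposition \ref{prop2.3} by localizing with a partition of unity adapted to the covering $\Omega\subset\Omega_0\cup_{k=1}^{n}\Omega_k$, after which each localized piece is either supported in the interior (where the conormal Sobolev norms coincide with the classical Sobolev norms, and the claims follow from classical embedding and trace theorems) or supported in a boundary patch, where I would flatten via the chart $\Psi:(y,z)\mapsto(y,\psi_k(y)+z)$ to reduce to a compactly supported function $\tilde f$ on the half-space $\mathbb{R}^2\times\mathbb{R}_+$. In these coordinates the conormal fields become $Z_1=\partial_{y^1}$, $Z_2=\partial_{y^2}$ and $Z_3=\varphi(z)\partial_z$ with $\varphi(0)=0$, so the degeneracy of $Z_3$ at $\{z=0\}$ forces me to treat the genuine normal derivative $\partial_z$ separately; this is the structural reason the $\nabla f$ term appears on the right-hand side of both inequalities.

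For part (1), I would combine a one-dimensional Agmon-type estimate in the normal direction with a two-dimensional Sobolev embedding in the tangential direction. For a compactly supported $g:\mathbb{R}_+\to\mathbb{R}$, Agmon's inequality gives
\begin{equation*}
\|g\|_{L^\infty(\mathbb{R}_+)}^2 \le 2\|g\|_{L^2(\mathbb{R}_+)}\|g'\|_{L^2(\mathbb{R}_+)}.
\end{equation*}
Applied slicewise in $y$ to a tangential derivative $\partial_y^\beta\tilde f$, and then coupled with the two-dimensional Gagliardo-Nirenberg inequality $\|h\|_{L^\infty(\mathbb{R}^2)}^2\lesssim\|h\|_{L^2}\|h\|_{H^2}$ (or an equivalent $H^s$ embedding, $s>1$) applied to the resulting function of $y$, one is allowed to place up to two more tangential derivatives for free. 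Distributing the tangential derivatives asymmetrically, so that at most $m_1$ derivatives fall on one factor of the Agmon estimate and at most $m_2$ on the other, and exploiting $Z_1=\partial_{y^1}$, $Z_2=\partial_{y^2}$ to identify tangential and conormal derivatives, I recover
\begin{equation*}
\|\tilde f\|_{L^\infty}^2 \lesssim \|\tilde f\|_{H_{co}^{m_1}}\bigl(\|\tilde f\|_{H_{co}^{m_2}}+\|\partial_z\tilde f\|_{H_{co}^{m_2}}\bigr),
\end{equation*}
provided that the total number of derivatives $(m_1\text{ tangential})+(m_2\text{ tangential})+1\text{ (normal, from Agmon)}$ reaches the three-dimensional Sobolev threshold, i.e.\ $m_1+m_2\ge 3$.

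For part (2), I would proceed along the same lines but replace the Agmon estimate by the half-line trace bound $|g(0)|^2\le 2\int_0^\infty |g|\,|g'|\,dz$, which pins down the restriction to $\{z=0\}$. Applying this slicewise to tangential derivatives of $\tilde f$ and then invoking tangential Fourier/Plancherel on $\partial\Omega\cap\Omega_k$ to capture the $H^s$ norm on the boundary (using complex interpolation between integer orders when $s$ is not an integer), the $s$ tangential derivatives demanded by $|f|_{H^s(\partial\Omega)}$ are split between the two factors, consuming $2s$ conormal derivatives in total and matching exactly the hypothesis $m_1+m_2\ge 2s$.

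The main obstacle I anticipate is the careful bookkeeping of the mismatch between $\partial_z$ and $Z_3=\varphi(z)\partial_z$: because $\varphi$ vanishes at the boundary, the physical normal derivative is genuinely stronger than any conormal derivative near $\{z=0\}$, so $\nabla f$ must be kept as an independent input in the inequalities and one must verify that the slicewise Agmon and trace estimates are performed with the physical $\partial_z\tilde f$ and not its conormal surrogate. The secondary technicality of noninteger $s$ in part (2) can be handled by interpolating between the integer endpoints on each compact patch $\partial\Omega\cap\Omega_k$, and the patching back together is harmless since the partition of unity is smooth and commutes up to lower order with the conormal fields.
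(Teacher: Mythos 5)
The paper does not prove Proposition \ref{prop2.3} at all --- it is quoted from \cite{Wang} and \cite{Wang-Xin-Yong} --- so there is no internal proof to compare against. Your proposal reconstructs the standard argument from those references and is sound: localize, flatten, control tangential derivatives by the conormal fields $Z_1,Z_2$, and keep the physical normal derivative $\partial_z$ as a separate input precisely because $Z_3=\varphi(z)\partial_z$ degenerates at $z=0$; then part (1) follows from the one-dimensional bound $\sup_z|h|^2\le 2\|h\|_{L^2_z}\|\partial_z h\|_{L^2_z}$ combined with the asymmetric two-dimensional embedding $\|g\|_{L^\infty(\mathbb{R}^2)}^2\lesssim\|g\|_{H^{s_1}}\|g\|_{H^{s_2}}$ for $s_1+s_2>2$, and part (2) from the half-line trace identity together with tangential Plancherel. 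Two minor remarks. First, in part (2) the detour through complex interpolation for noninteger $s$ is unnecessary: writing $|f|_{H^s(\partial\Omega)}^2=\int\langle\xi\rangle^{2s}|\hat f(\xi,0)|^2\,d\xi$ and splitting the weight as $\langle\xi\rangle^{m_1}\cdot\langle\xi\rangle^{2s-m_1}$ with $2s-m_1\le m_2$ handles all real $s$ with $0\le 2s\le m_1+m_2$ in one stroke. Second, in part (1) the order of the two one-dimensional steps should be: freeze $z$, apply the two-dimensional embedding in $y$, and only then apply the Agmon-type bound in $z$ to the resulting products of tangential Sobolev norms (via the Fourier representation in $y$); performing Agmon first and the $y$-embedding second requires an extra Minkowski-type exchange that your sketch glosses over, though it can also be made rigorous. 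Neither point is a gap in substance.
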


\section{A priori estimates}\label{estimates}

\quad The aim of this section is to prove the following a priori estimates,
which are crucial to prove Theorem \ref{Theorem1.1}.
For notational convenience, we drop the superscript $\varepsilon$ throughout
this section.

\begin{theo}[a priori estimates]\label{Theoream3.1}
Let $m$ be an integer satisfying $m \ge 6$, $\Omega$ be a
$C^{m+2}$ domain, and $A\in \mathcal{C}^{m+1}(\partial \Omega)$.
For sufficiently smooth solutions defined on $[0, T]$ of
\eqref{eq1} and \eqref{bc1}, then the following a priori estimate holds
\begin{equation}
\begin{aligned}
&\underset{0\le \tau \le t}{\sup}N_m(\tau)
+\varepsilon \int_0^t( \|\nabla u\|_{\mathcal{H}^{m}}^2
+ \|\nabla^2 u\|_{\mathcal{H}^{m-1}}^2 )d\tau
+\int_0^t (\|\Delta d\|_{\mathcal{H}^{m}}^2+\|\nabla \Delta d\|_{\mathcal{H}^{m-1}}^2) d\tau\\
&
\le \widetilde{C}_2C_{m+2}\left\{N_m(0)+P(N_m(t))\int_0^t P(N_m(\tau)) d\tau\right\},
\quad \quad \forall t\in [0, T],
\end{aligned}
\end{equation}
where $\widetilde{C}_2$ depends only on $\frac{1}{c_0},$ $P(\cdot)$
is a polynomial, and
\begin{equation}\label{def1}
N_m(t)\triangleq
       \|u\|_{\mathcal{H}^{m}}^2
       +\|d\|_{L^2}^2
       +\|\nabla d\|_{\mathcal{H}^{m}}^2
       +\|\nabla u\|_{\mathcal{H}^{m-1}}^2
       +\|\Delta d\|_{\mathcal{H}^{m-1}}^2
       +\|\nabla u\|_{\mathcal{H}^{1,\infty}}^2.
\end{equation}
\end{theo}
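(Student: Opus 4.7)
The plan is to establish the inequality by combining six coupled energy estimates in the spirit of Masmoudi--Rousset, adapted to the nematic coupling. Throughout, $\mathcal{Z}^\alpha$ with $|\alpha|\le m$ denotes a generic conormal/temporal derivative, and all commutators $[\mathcal{Z}^\alpha,\cdot]$ will be controlled by the Gagliardo--Nirenberg--Moser inequality of Proposition \ref{prop2.2}. The cornerstone is a coupled conormal $\mathcal{H}^m$ estimate for $(u,\nabla d)$: apply $\mathcal{Z}^\alpha$ to $(\ref{eq1})_1$ and test with $\mathcal{Z}^\alpha u$; in parallel, apply $\mathcal{Z}^\alpha$ to $(\ref{eq1})_2$ and pair with $-\mathcal{Z}^\alpha\Delta d$. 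The highest-order parts of $\int \mathcal{Z}^\alpha(\nabla d\cdot\Delta d)\cdot \mathcal{Z}^\alpha u\,dx$ and $\int \mathcal{Z}^\alpha(u\cdot\nabla d)\cdot(-\mathcal{Z}^\alpha\Delta d)\,dx$ cancel after a single integration by parts, leaving only commutators. Boundary terms from the viscous integration by parts are absorbed via the vorticity form $n\times(\nabla\times u)=[Bu]_\tau$ of (\ref{bc2}); those from the director equation vanish because $\partial d/\partial n=0$. This produces control of $\|u\|_{\mathcal{H}^m}^2+\|\nabla d\|_{\mathcal{H}^m}^2$ together with the dissipations $\varepsilon\int_0^t\|\nabla u\|_{\mathcal{H}^m}^2\,d\tau$ and $\int_0^t\|\Delta d\|_{\mathcal{H}^m}^2\,d\tau$.

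\textbf{Normal derivatives and pressure.} To bound $\|\Delta d\|_{\mathcal{H}^{m-1}}$, I rewrite $(\ref{eq1})_2$ as $\Delta d=d_t+u\cdot\nabla d-|\nabla d|^2 d$ and apply $\mathcal{Z}^\alpha$ with $|\alpha|\le m-1$, using the inductive availability of $\partial_t d$ through the same equation. To bound $\|\nabla u\|_{\mathcal{H}^{m-1}}$, I introduce the rotated vorticity $\eta:=\omega\times n+(Bu)_\tau$ with $\omega=\nabla\times u$; by (\ref{bc2}) this quantity vanishes on $\partial\Omega$, the tangential part $(\partial_n u)_\tau$ is an algebraic expression in $\eta$ and $u$, and $\partial_n u\cdot n$ is recovered from ${\rm div}\,u=0$. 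Then $\eta$ solves a convection--diffusion equation $\eta_t+u\cdot\nabla\eta-\varepsilon\Delta\eta=F$ whose source $F$ contains $\nabla\times(\nabla d\cdot\Delta d)$ in addition to the usual commutators; a conormal energy estimate on $\eta$ with homogeneous Dirichlet data yields the desired bound. Finally, the pressure is split as $p=p_1+p_2$, where $p_1$ solves the Euler-type Poisson problem and $p_2$ captures the Navier correction; the extra $\nabla d\cdot\Delta d$ contribution to $-\Delta p$ is handled by moving one derivative onto $\nabla d$ and invoking the $\mathcal{H}^{m-1}$ bound on $\Delta d$ from the previous step.

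\textbf{Lipschitz bounds and main obstacle.} The $W^{1,\infty}$ control of $\Delta d$ follows from parabolic smoothing combined with the anisotropic embedding of Proposition \ref{prop2.3}. For $\|\nabla u\|_{\mathcal{H}^{1,\infty}}$, Proposition \ref{prop2.3} reduces matters to controlling $\|(\partial_n u)_\tau\|_{\mathcal{H}^{1,\infty}}$; I use an equivalent quantity based on $\eta$ that satisfies homogeneous Dirichlet data and solves a convection--diffusion equation at leading order, pass to the local coordinates $(y,z)$, and apply the $L^\infty$ maximum-principle estimate of Masmoudi--Rousset, treating the $\nabla d\cdot\Delta d$ source by the bounds already obtained. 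The genuinely new difficulty compared with pure Navier--Stokes lies in the nonlinear coupling $\nabla d\cdot\Delta d$: at the top order it produces terms with $m$ conormal derivatives on $\Delta d$ that cannot be closed by $\|\Delta d\|_{\mathcal{H}^{m-1}}$ alone and force the appearance of $\int_0^t\|\Delta d\|_{\mathcal{H}^m}^2\,d\tau$ and $\int_0^t\|\nabla\Delta d\|_{\mathcal{H}^{m-1}}^2\,d\tau$ on the left-hand side. Verifying that the Step 1 cancellation survives at every level $|\alpha|\le m$ up to commutators controllable by $P(N_m)$ is the main technical hurdle; once this is in place, summing the six estimates and applying Gronwall in integrated form yields the claimed bound.
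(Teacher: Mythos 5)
Your proposal is correct in architecture and matches the paper's proof almost step for step: the same conormal $\mathcal{H}^m$ energy for $(u,\nabla d)$, the $\mathcal{H}^{m-1}$ bound on $\Delta d$ read off from the equation, the quantity $\eta=w\times n+(Bu)_\tau$ with homogeneous Dirichlet data combined with ${\rm div}\,u=0$ for the normal derivatives, the splitting $p=p_1+p_2$ into Euler and Navier parts, and the Masmoudi--Rousset $L^\infty$ lemma in normal geodesic coordinates for $\|\nabla u\|_{\mathcal{H}^{1,\infty}}$. The one genuine divergence is your treatment of the coupling: you pair the director equation with $-\mathcal{Z}^\alpha\Delta d$ and exploit the algebraic cancellation of the top-order terms $-\int(\mathcal{Z}^\alpha u\cdot\nabla)d\cdot\mathcal{Z}^\alpha\Delta d$ and $+\int(\mathcal{Z}^\alpha u\cdot\nabla)d\cdot\mathcal{Z}^\alpha\Delta d$, whereas the paper applies $\mathcal{Z}^\alpha\nabla$ to $\eqref{eq1}_2$, tests with $\mathcal{Z}^\alpha\nabla d$, and makes no use of any cancellation: both nonlinearities $\mathcal{Z}^\alpha(\nabla d\cdot\Delta d)$ and $\mathcal{Z}^\alpha(u\cdot\nabla d)$ are estimated directly by Proposition \ref{prop2.2} and absorbed into the parabolic dissipation $\delta\int_0^t\|\Delta d\|_{\mathcal{H}^m}^2\,d\tau$, which is available because the director equation is uniformly parabolic (see \eqref{3225} and \eqref{3235}). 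Your cancellation is real and would be the more robust route if the relaxation constant $\theta$ were allowed to degenerate (cf.\ Remark 1.2), but here it buys nothing the dissipation does not already give. One caution: your assertion that the boundary terms from the director equation ``vanish because $\partial d/\partial n=0$'' is too quick --- $\mathcal{Z}^\alpha$ does not commute with $\partial_n$, so $n\cdot\nabla\mathcal{Z}^\alpha d$ need not vanish on $\partial\Omega$; these boundary integrals vanish only when $|\alpha_0|=m$ or $\alpha_{13}\ge1$ and otherwise must be estimated by integrating by parts along the boundary and invoking the trace inequality of Proposition \ref{prop2.3}, exactly as the paper does in \eqref{3232}--\eqref{3234} and \eqref{3264}--\eqref{3267}. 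This is a matter of added labor rather than a flaw in the approach.
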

Since the proof of Theorem \ref{Theoream3.1} is quite lengthy and involved,
we divide the proof into the following several subsections.

\subsection{Conormal Estimates for $u$ and $\nabla d$}

\quad For any smooth function $f$, notice that
\begin{equation*}
\Delta f=\nabla {\rm div}f-\nabla \times (\nabla \times f),
\end{equation*}
and then $\eqref{eq1}_1$ can be written as
\begin{equation}\label{eq1-1}
u_t+u\cdot \nabla u+\nabla p
=-\varepsilon \nabla \times (\nabla \times u)-\nabla d\cdot \Delta d.
\end{equation}

In this subsection, we first give the basic a priori $L^2$ energy estimate which holds
for \eqref{eq1} and \eqref{bc2}.
Let
\begin{equation}\label{def2}
Q(t)\triangleq \underset{0\le \tau \le t}{\sup}
\{\|(u, u_\tau)(\tau)\|_{L^{\infty}}^2
+\|d_\tau(\tau)\|_{W^{1,\infty}}^2
+\|\nabla d(\tau)\|_{W^{1,\infty}}^2
+\|\nabla \Delta d(\tau)\|_{L^\infty}^2
+\|\nabla u(\tau)\|_{\mathcal{H}^{1,\infty}}^2\}.
\end{equation}

\begin{lemm}\label{lemma3.1}
For a smooth solution to \eqref{eq1} and \eqref{bc2}, it holds that
for $\varepsilon \in (0, 1]$
\begin{equation}\label{311}
\begin{aligned}
&\underset{0\le \tau \le t}{\sup}\int(|u|^2+|d|^2+|\nabla d|^2)(\tau)dx
+\varepsilon \int_0^t \int |\nabla u|^2 dx d\tau
+\int_0^t \int |\Delta d|^2 dx d\tau\\
&\le \int(|u_0|^2+|\nabla d_0|^2)dx+C_2[1+Q(t))]\int_0^t N_0(\tau) d\tau.
\end{aligned}
\end{equation}
\end{lemm}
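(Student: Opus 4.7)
The plan is to perform three energy tests simultaneously and exploit a cancellation between the Ericksen stress $-\nabla d\cdot\Delta d$ in the momentum equation and the convection term $u\cdot\nabla d$ in the director equation. First I would rewrite \eqref{eq1}$_1$ in the vorticity form \eqref{eq1-1}, and then (a) test it against $u$, (b) test \eqref{eq1}$_2$ against $-\Delta d$, and (c) test \eqref{eq1}$_2$ against $d$, integrating over $\Omega$ in each case.

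Testing \eqref{eq1-1} with $u$, the pressure and convection terms vanish using ${\rm div}\,u=0$ together with $u\cdot n=0$; the viscous contribution $\varepsilon\int\nabla\times(\nabla\times u)\cdot u\,dx$ produces $\varepsilon\|\nabla\times u\|^2$ plus the boundary integral $-\varepsilon\int_{\partial\Omega}[Bu]_\tau\cdot u\,dS$ via the vorticity form of the Navier condition in \eqref{bc2}; and the coupling contribution $-\int(\nabla d\cdot\Delta d)\cdot u\,dx$ can, by the componentwise definition $(\nabla d\cdot\Delta d)_i=\sum_j\partial_i d_j\,\Delta d_j$ given below \eqref{eq1}, be rewritten as $-\int\Delta d\cdot(u\cdot\nabla d)\,dx$. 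Testing \eqref{eq1}$_2$ with $-\Delta d$ and using the Neumann condition $\partial d/\partial n=0$ to kill the boundary term produced by $\int d_t\cdot(-\Delta d)\,dx$, one obtains
\begin{equation*}
\tfrac{1}{2}\tfrac{d}{dt}\|\nabla d\|^2 + \|\Delta d\|^2 = \int(u\cdot\nabla d)\cdot\Delta d\,dx - \int|\nabla d|^2 d\cdot\Delta d\,dx.
\end{equation*}
Testing \eqref{eq1}$_2$ with $d$ gives $\tfrac{1}{2}\tfrac{d}{dt}\|d\|^2+\|\nabla d\|^2=\int|\nabla d|^2|d|^2\,dx$, the convection term vanishing by incompressibility and $u\cdot n=0$.

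Summing the three identities, the coupling terms $\mp\int\Delta d\cdot(u\cdot\nabla d)\,dx$ cancel exactly, leaving the viscous boundary integral and two nonlinear remainders on the right. The boundary term is handled by the trace estimate \eqref{25} with $s=0$, $m_1=m_2=0$, giving $\varepsilon\bigl|\int_{\partial\Omega}Bu\cdot u\,dS\bigr|\le C_2\varepsilon(\|u\|^2+\|\nabla u\|^2)$, already dominated by $C_2 N_0$. The cubic remainder is controlled by H\"older and Cauchy--Schwarz as $\|\nabla d\|_{L^\infty}\|\nabla d\|\,\|d\|_{L^\infty}\|\Delta d\|\le \tfrac14\|\Delta d\|^2+C\,Q(t)\|\nabla d\|^2$, so its $\|\Delta d\|^2$ part is absorbed into the dissipation on the left. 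The quartic remainder satisfies $\int|\nabla d|^2|d|^2\,dx\le Q(t)\|d\|^2$. The factor $\|d\|_{L^\infty}$ is controlled either through the pointwise constraint $|d|=1$ preserved by the flow for smooth solutions or via $\|d(t)\|_{L^\infty}\le \|d_0\|_{L^\infty}+\int_0^t\|d_\tau\|_{L^\infty}\,d\tau$, where $\|d_\tau\|_{L^\infty}$ is part of $Q(t)$. Integrating in $\tau$ then yields \eqref{311}.

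The main obstacle is the precise cancellation of the coupling terms, which rests entirely on rewriting $\int(\nabla d\cdot\Delta d)\cdot u\,dx$ as $\int\Delta d\cdot(u\cdot\nabla d)\,dx$ via the explicit componentwise formula; without this identification the $\|\Delta d\|^2$ dissipation would be forced to absorb an uncontrolled extra copy of $\int\Delta d\cdot(u\cdot\nabla d)\,dx$, and the closed energy structure would be lost. A minor additional subtlety is keeping every constant independent of $\varepsilon$: the dissipation $\varepsilon\|\nabla\times u\|^2$ is kept on the left rather than used to absorb the boundary term, while the claimed $\varepsilon\int_0^t\|\nabla u\|^2\,d\tau$ on the left of \eqref{311} follows from $\varepsilon\le 1$ together with the elliptic identity $\|\nabla u\|^2\le C(\|\nabla\times u\|^2+\|u\|^2)$ from Proposition \ref{prop2.1} applied with ${\rm div}\,u=0$ and $u\cdot n=0$.
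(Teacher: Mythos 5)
Your proposal is correct and follows essentially the same route as the paper: the same three energy tests (against $u$, $-\Delta d$, and $d$), the same exact cancellation of $\int(u\cdot\nabla)d\cdot\Delta d\,dx$, the trace inequality for the Navier boundary term, and Proposition \ref{prop2.1} to convert $\|\nabla\times u\|^2$ into $\|\nabla u\|^2$. The only cosmetic difference is that the paper uses the $d$-test together with $|d_0|=1$ and Gr\"onwall to conclude $|d|\equiv 1$, whence $d\cdot\Delta d=-|\nabla d|^2$ turns the cubic term into $\int|\nabla d|^4\,dx$ exactly, whereas you absorb it via Cauchy--Schwarz into the $\|\Delta d\|^2$ dissipation; both are valid.
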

\begin{proof}
Multiplying \eqref{eq1}$_2$ by $d$ and integrating over $\Omega$, one arrives at
\begin{equation*}
\frac{d}{dt}\frac{1}{2}\int (|d|^2-1)dx
+\int u\cdot \nabla (|d|^2-1)dx
=\int \Delta d \cdot d ~dx+\int |\nabla d|^2 |d|^2 dx,
\end{equation*}
which, integrating by part and applying the boundary condition \eqref{bc2}, yields that
\begin{equation}\label{312-1}
\frac{d}{dt}\int(|d|^2-1)dx+2\int (|d|^2-1)|\nabla d|^2 dx=0.
\end{equation}
In view of the Gr\"{o}nwall inequality, one deduce from the identity \eqref{312-1} that
\begin{equation}\label{unit}
|d|=1\quad {\rm in}~\overline{\Omega}.
\end{equation}
Multiplying \eqref{eq1-1} by $u$, integrating by parts
and applying the boundary condition \eqref{bc2}, we find
\begin{equation}\label{312}
\frac{1}{2} \frac{d}{dt}\int |u|^2 dx
+\varepsilon \int \nabla \times (\nabla \times u)\cdot u ~dx
=-\int (u \cdot \nabla) d \cdot \Delta d ~dx.
\end{equation}
Integrating by part and applying the boundary condition \eqref{bc2}, we get
\begin{equation*}
\begin{aligned}
\int \nabla \times (\nabla \times u)u ~dx
&=\int_{\partial \Omega}n\times (\nabla \times u)u~ d\sigma
+\int |\nabla \times u|^2 dx \\
&=\int_{\partial \Omega}[Bu]_\tau \cdot  u_\tau~ d\sigma
+\int |\nabla \times u|^2 dx,
\end{aligned}
\end{equation*}
which, together with \eqref{312}, gives directly
\begin{equation}\label{313}
\frac{1}{2} \frac{d}{dt}\int |u|^2 dx
+\varepsilon \int |\nabla \times u|^2 dx
=-\varepsilon\int_{\partial \Omega}[Bu]_\tau \cdot  u_\tau~ d\sigma
-\int (u \cdot \nabla) d \cdot \Delta d ~dx.
\end{equation}
Multiplying \eqref{eq1}$_3$ by $\Delta d$ and integrating over $\Omega$, one arrives at
\begin{equation}\label{314}
\int(d_t+u \cdot \nabla d)\cdot \Delta d ~dx
=\int |\Delta d|^2 dx+\int |\nabla d|^2 d \cdot \Delta d ~dx.
\end{equation}
Integration by part and application of boundary condition \eqref{bc2} yield directly
\begin{equation}\label{315}
\int d_t \cdot \Delta d ~ dx
=\int_{\partial \Omega}d_t \cdot \frac{\partial d}{\partial n}d\sigma
-\frac{1}{2}\frac{d}{dt}\int |\nabla d|^2 dx
=-\frac{1}{2}\frac{d}{dt}\int |\nabla d|^2 dx.
\end{equation}
By virtue of the basic fact $|d|=1$, we find $\Delta d \cdot d=-|\nabla d|^2$.
Then, the combination of \eqref{314} and \eqref{315} gives
\begin{equation*}
\frac{1}{2}\frac{d}{dt}\int |\nabla d|^2 dx+\int |\Delta d|^2 dx
=\int (u \cdot \nabla) d\cdot \Delta d ~dx+\int |\nabla d|^4 dx,
\end{equation*}
which, together with \eqref{313}, yields directly
\begin{equation}\label{316}
\frac{1}{2}\frac{d}{dt}\int (|u|^2+|\nabla d|^2) dx
+\varepsilon \int |\nabla \times u|^2 dx
+ \int |\Delta d|^2 dx
\le \int |\nabla d|^4 dx
+ \varepsilon C_2 \int_{\partial \Omega}|u|^2 d\sigma.
\end{equation}
The trace theorem in Proposition \ref{prop2.3} implies
\begin{equation}\label{317}
|u|_{L^2(\partial \Omega)}^2\le \delta \|\nabla u\|^2+C_\delta \|u\|^2.
\end{equation}
The application of Proposition \ref{prop2.1} gives immediately
\begin{equation}\label{318}
\|\nabla \times u\|_{L^2}^2\ge c_1 \|\nabla u\|_{L^2}^2-c_2 \|u\|_{L^2}^2.
\end{equation}
Substituting \eqref{317} and \eqref{318} into \eqref{316}
and choosing $\delta $ small enough, one arrives at
\begin{equation*}
\frac{d}{dt}\int (|u|^2+|\nabla d|^2) dx
+\varepsilon \int |\nabla  u|^2 dx
+\int |\Delta d|^2 dx
\le \int |\nabla d|^4 dx
+C_2 \int  |u|^2 dx,
\end{equation*}
which, integrating over $[0, t]$, yields
\begin{equation*}
\begin{aligned}
&\underset{0\le \tau \le t}{\sup}\int (|u|^2+|\nabla d|^2)(\tau) dx
+\varepsilon \int_0^t \int |\nabla  u|^2 dx d\tau
+\int_0^t \int |\Delta d|^2 dx d\tau\\
&\le \int (|u_0|^2+|\nabla d_0|^2) dx
+\|\nabla d\|_{L^\infty_{x, t}}^2\int_0^t \int |\nabla d|^2 dx d\tau
+C_2 \int_0^t \int  |u|^2 dx d\tau.
\end{aligned}
\end{equation*}
Therefore, we complete the proof of the lemma \ref{lemma3.1}.
\end{proof}

However, the above basic energy estimation is insufficient to get the
vanishing viscosity limit. Some conormal derivative estimates are needed.

\begin{lemm}\label{lemma3.2}
For a smooth solution to \eqref{eq1} and \eqref{bc2}, it holds that
for every $m\in \mathbb{N}_+$ and $\varepsilon \in (0, 1]$
\begin{equation}\label{321}
\begin{aligned}
&\underset{0\le \tau \le t}{\sup}\|(u, \nabla d)\|_{\mathcal{H}^{m}}^2
+\varepsilon \int_0^t \|\nabla u\|_{\mathcal{H}^{m}}^2 d\tau
+\int_0^t \|\Delta d\|_{\mathcal{H}^{m}}^2 d\tau\\
&\le C_{m+2}\left\{\|(u_0, \nabla d_0)\|_{\mathcal{H}^{m}}^2
     +\delta \int_0^t \|\nabla \Delta d\|_{\mathcal{H}^{m-1}}^2d\tau
     +\delta \varepsilon^2 \int\|\nabla^2 u\|_{\mathcal{H}^{m-1}}^2 d\tau\right.\\
&\quad \quad \quad \quad
\left. +\int_0^t\!\!(\|\nabla^2 p_1\|_{\mathcal{H}^{m-1}}\|u\|_{\mathcal{H}^{m}}
+\varepsilon^{-1}\|\nabla p_2\|_{\mathcal{H}^{m-1}}^2)d\tau
+C_\delta[1+P(Q(t))]\!\int_0^t\! N_m(\tau)d\tau\right\},
\end{aligned}
\end{equation}
where the pressure $p$ is split as $p=p_1+p_2$, where $p_1$ and $p_2$
are defined by \eqref{3215} and \eqref{3216} respectively.
\end{lemm}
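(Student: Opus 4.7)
The plan is to apply the conormal differential operator $\mathcal{Z}^\alpha$ with $|\alpha|\le m$ to the vorticity-form momentum equation \eqref{eq1-1} and to the director equation \eqref{eq1}$_2$, and then perform $L^2$-type energy identities. For the velocity I would take the $L^2$ inner product with $\mathcal{Z}^\alpha u$: the time derivative yields $\frac{1}{2}\frac{d}{dt}\|u\|_{\mathcal{H}^{m}}^2$, and integrating the $\varepsilon \nabla\times(\nabla\times u)$ term by parts produces the dissipation $\varepsilon\|\nabla u\|_{\mathcal{H}^{m}}^2$ at leading order, up to commutators and the boundary contribution controlled by $[Bu]_\tau$ as in Lemma \ref{lemma3.1}. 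For the director I would test the differentiated equation against $-\Delta \mathcal{Z}^\alpha d$: the time derivative produces $\frac{1}{2}\frac{d}{dt}\|\nabla d\|_{\mathcal{H}^{m}}^2$ after an integration by parts using $\partial_n\mathcal{Z}^\alpha d|_{\partial\Omega}=0$ at leading order (with commutator errors to be absorbed), while the Laplacian on the right produces the favourable dissipation $\|\Delta d\|_{\mathcal{H}^{m}}^2$.

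The remaining work is to estimate the transport, forcing, pressure and commutator terms. I would split $\mathcal{Z}^\alpha(u\cdot\nabla u)$ and $\mathcal{Z}^\alpha(u\cdot\nabla d)$ as $u\cdot\nabla\mathcal{Z}^\alpha(\cdot)+[\mathcal{Z}^\alpha,u\cdot\nabla](\cdot)$; the leading piece is antisymmetric after testing, since $u\cdot n=0$ kills the boundary contribution, while the commutator is handled via Proposition \ref{prop2.2}, splitting by how many $\mathcal{Z}$'s land on $u$ versus on the unknown and absorbing $L^\infty$ factors into $Q(t)$. The nematic coupling $\nabla d\cdot\Delta d$ and the harmonic-map nonlinearity $|\nabla d|^2 d$ are estimated similarly, using that $Q(t)$ controls $\|\nabla d\|_{W^{1,\infty}}$ and $\|\nabla\Delta d\|_{L^\infty}$; borderline interactions that require a full $\nabla$ rather than a $\mathcal{Z}$ on the top-order factor generate the pieces $\delta\|\nabla\Delta d\|_{\mathcal{H}^{m-1}}^2$ and $\delta\varepsilon^2\|\nabla^2 u\|_{\mathcal{H}^{m-1}}^2$ absorbed via Young's inequality, which explains these terms on the right-hand side of \eqref{321}.

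For the pressure, because $\mathcal{Z}^\alpha$ does not commute with $\nabla$, one cannot directly exploit ${\rm div}\,u=0$ to cancel its contribution. Following \cite{Masmoudi-Rousset}, I would use the splitting $p=p_1+p_2$ introduced later in the proof, where $p_1$ carries the regularity of the Euler pressure and $p_2$ is the Navier correction: the quantity $\int \mathcal{Z}^\alpha\nabla p\cdot \mathcal{Z}^\alpha u\,dx$ is rewritten through $[\mathcal{Z}^\alpha,\nabla]p$ together with the divergence-free condition, yielding a bound by $\|\nabla^2 p_1\|_{\mathcal{H}^{m-1}}\|u\|_{\mathcal{H}^{m}}$ plus an $\varepsilon^{-1}\|\nabla p_2\|_{\mathcal{H}^{m-1}}^2$ piece that can be absorbed against the $\varepsilon$-dissipation. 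The boundary contribution $\int_{\partial\Omega}[B\mathcal{Z}^\alpha u]_\tau\cdot (\mathcal{Z}^\alpha u)_\tau\, d\sigma$ coming from the viscous integration by parts is controlled by the trace estimate of Proposition \ref{prop2.3} combined with a small-$\delta$ absorption into $\varepsilon\|\nabla u\|_{\mathcal{H}^{m}}^2$.

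The main obstacle, I expect, will be the interplay between the normal-type vector field $Z_3=\varphi(z)\partial_z$ and the normal derivative hidden in $\Delta$: commutators $[\mathcal{Z}^\alpha,\Delta]$ generate normal-derivative errors that do not sit inside any conormal norm, and have to be grouped so that either they vanish at the boundary thanks to $\varphi(0)=0$ (allowing an integration by parts without boundary contribution), or they can be absorbed into the dissipative quantities $\varepsilon\|\nabla u\|_{\mathcal{H}^{m}}^2$, $\|\Delta d\|_{\mathcal{H}^{m}}^2$, $\|\nabla\Delta d\|_{\mathcal{H}^{m-1}}^2$ with a small constant. Once this commutator bookkeeping is settled, summing over $|\alpha|\le m$ and integrating in time yields \eqref{321}.
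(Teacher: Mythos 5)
Your proposal follows essentially the same route as the paper's proof: a conormal energy estimate on the curl form \eqref{eq1-1} of the momentum equation (so that only first-order commutators $[\mathcal{Z}^\alpha,\nabla\times]$ appear rather than $[\mathcal{Z}^\alpha,\Delta]$), the Masmoudi--Rousset splitting $p=p_1+p_2$ with an integration by parts along the boundary for the $p_2$ contribution, Gagliardo--Nirenberg--Moser commutator estimates with the $L^\infty$ factors absorbed into $Q(t)$, and trace estimates plus Young's inequality to absorb the boundary and commutator contributions into $\delta\varepsilon\|\nabla u\|_{\mathcal{H}^{m}}^2$, $\delta\varepsilon^2\|\nabla^2 u\|_{\mathcal{H}^{m-1}}^2$ and $\delta\|\nabla\Delta d\|_{\mathcal{H}^{m-1}}^2$. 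The only real deviation is in the director part, where you test the $\mathcal{Z}^\alpha$-differentiated equation against $-\Delta\mathcal{Z}^\alpha d$, whereas the paper applies $\mathcal{Z}^\alpha\nabla$ to \eqref{eq1}$_2$ and tests against $\mathcal{Z}^\alpha\nabla d$ (so the time term requires no integration by parts and the dissipation emerges as $\|{\rm div}(\mathcal{Z}^\alpha\nabla d)\|^2$, converted to $\|\Delta d\|_{\mathcal{H}^{m}}^2$ by elliptic regularity); both variants work, and your two small misattributions --- the term $\delta\varepsilon^2\|\nabla^2 u\|_{\mathcal{H}^{m-1}}^2$ in fact originates from the viscous boundary term and the commutator $[\mathcal{Z}^\alpha,\nabla\times]w$, and $\delta\|\nabla\Delta d\|_{\mathcal{H}^{m-1}}^2$ from $[\mathcal{Z}^\alpha,\nabla]\Delta d$ and the associated boundary term, rather than from the nematic nonlinearities --- do not affect the validity of the argument.
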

\begin{proof}
The case for $m=0$ is already proved in Lemma \ref{lemma3.1}. Assume that
\eqref{321} is proved for $k=m-1$. We shall prove that holds for $k=m \ge 1$.
Applying the operator $\mathcal{Z}^\alpha(|\alpha_0|+|\alpha_1|=m)$
to the equation \eqref{eq1-1}, we find
\begin{equation}\label{322}
\mathcal{Z}^\alpha u_t+u\cdot \nabla \mathcal{Z}^\alpha u+\mathcal{Z}^\alpha \nabla p
=-\varepsilon \mathcal{Z}^\alpha \nabla \times (\nabla \times u)
-\mathcal{Z}^\alpha(\nabla d \cdot \Delta d)+\mathcal{C}_1^\alpha,
\end{equation}
where
\begin{equation*}
\mathcal{C}_1^\alpha=-[\mathcal{Z}^\alpha, u\cdot \nabla]u.
\end{equation*}
Multiplying \eqref{322} by $\mathcal{Z}^\alpha u$, it is easy to deduce that
\begin{equation}\label{323}
\begin{aligned}
&\frac{1}{2}\frac{d}{dt}\int |\mathcal{Z}^\alpha u|^2 dx
+\int \mathcal{Z}^\alpha \nabla p \cdot \mathcal{Z}^\alpha u ~dx\\
&=-\varepsilon \int \mathcal{Z}^\alpha \nabla \times(\nabla \times u)\cdot \mathcal{Z}^\alpha u ~dx
-\int \mathcal{Z}^\alpha(\nabla d \cdot \Delta d)\cdot \mathcal{Z}^\alpha u~dx
+\int \mathcal{C}_1^\alpha \cdot \mathcal{Z}^\alpha u ~dx.
\end{aligned}
\end{equation}
Integrating by part and applying the boundary condition \eqref{bc2}, one arrives at
\begin{equation}\label{324}
\begin{aligned}
&-\varepsilon \int \mathcal{Z}^\alpha \nabla \times w \cdot \mathcal{Z}^\alpha u~dx\\
&=-\varepsilon \int (\nabla \times (\mathcal{Z}^\alpha w)+[\mathcal{Z}^\alpha, \nabla \times]w)
   \cdot \mathcal{Z}^\alpha u ~dx\\
&=-\varepsilon \int_{\partial \Omega} n \times \mathcal{Z}^\alpha w \cdot \mathcal{Z}^\alpha ud\sigma
-\varepsilon \int \mathcal{Z}^\alpha w \cdot \nabla \times \mathcal{Z}^\alpha u ~dx
-\varepsilon \int [\mathcal{Z}^\alpha, \nabla \times]w\cdot \mathcal{Z}^\alpha u ~dx\\
&=-\varepsilon \int |\nabla \times \mathcal{Z}^\alpha u|^2 dx
-\varepsilon \int [\mathcal{Z}^\alpha, \nabla \times]u \cdot \nabla \times \mathcal{Z}^\alpha u ~dx
-\varepsilon \int [\mathcal{Z}^\alpha, \nabla \times]w \cdot \mathcal{Z}^\alpha u ~dx\\
&\quad
-\varepsilon \int_{\partial \Omega}n \times \mathcal{Z}^\alpha w \cdot \mathcal{Z}^\alpha u ~d\sigma.
\end{aligned}
\end{equation}
Applying the Cauchy inequality, it is easy to deduce that
\begin{equation}\label{325}
\begin{aligned}
&-\varepsilon \int [\mathcal{Z}^\alpha, \nabla \times]u \cdot \nabla \times \mathcal{Z}^\alpha u ~dx
\le \frac{\varepsilon}{4}\|\nabla \times \mathcal{Z}^\alpha u\|^2
     +C\|\nabla u\|_{\mathcal{H}^{m-1}}^2,\\
&-\varepsilon \int [\mathcal{Z}^\alpha, \nabla \times]w \cdot \mathcal{Z}^\alpha u ~dx
\le \delta \varepsilon^2 \|\nabla^2 u\|_{\mathcal{H}^{m-1}}^2
    +C_\delta \|u\|_{\mathcal{H}^{m}}^2.
\end{aligned}
\end{equation}
To deal with the boundary terms involving $\mathcal{Z}^\alpha u$ with $\alpha_{13}=0$.
(If $\alpha_{13}\neq0, \mathcal{Z}^\alpha u=0$ on the boundary be definition).
With the help of trace theorem in Proposition \ref{prop2.3},
one has for $|\alpha_0|+|\alpha_1|=m$
\begin{equation*}
\begin{aligned}
&|n \times \mathcal{Z}^\alpha w|_{L^2(\partial \Omega)}\\
&\le C_{m+2}\left(|\partial_t^{\alpha_0} w|_{H^{|\alpha_1|-1}(\partial \Omega)}
             +|\partial_t^{\alpha_0} u|_{H^{|\alpha_1|}(\partial \Omega)}\right)\\
&\le C_{m+2} \|\nabla \partial_t^{\alpha_0}w\|_{|\alpha_1|-1}^{\frac{1}{2}}
             \|\partial_t^{\alpha_0}w\|_{|\alpha_1|-1}^{\frac{1}{2}}
     +C_{m+2} \|\nabla \partial_t^{\alpha_0}u\|_{|\alpha_1|}^{\frac{1}{2}}
             \|\partial_t^{\alpha_0}u\|_{|\alpha_1|}^{\frac{1}{2}}\\
&\quad +C_{m+2}\left(\|\nabla u\|_{\mathcal{H}^{m-1}}+\|u\|_{\mathcal{H}^{m}}\right)\\
&\le C_{m+2}(\|\nabla^2 u\|_{\mathcal{H}^{m-1}}^{\frac{1}{2}}
           \|\nabla u\|_{\mathcal{H}^{m-1}}^{\frac{1}{2}}
          +\|\nabla u\|_{\mathcal{H}^{m}}^{\frac{1}{2}}
           \|u\|_{\mathcal{H}^{m}}^{\frac{1}{2}}
          +\|\nabla u\|_{\mathcal{H}^{m-1}}+\|u\|_{\mathcal{H}^{m}})\\
&\le C_{m+2}(\|\nabla^2 u\|_{\mathcal{H}^{m-1}}
          +\|\nabla u\|_{\mathcal{H}^{m-1}}
          +\|\nabla u\|_{\mathcal{H}^{m}}
          +\|u\|_{\mathcal{H}^{m}}).
\end{aligned}
\end{equation*}
Note that we use the convention that $\|\cdot\|_{H^k}=0$, for $k<0$
here and below. Then
the term $|\partial_t^{\alpha_0} w|_{H^{|\alpha_1|-1}(\partial \Omega)}$
does not show up when $|\alpha_0|=m$.
Hence, it is easy to deduce that
\begin{equation}\label{326}
\begin{aligned}
&-\varepsilon \int_{\partial \Omega}n \times \mathcal{Z}^\alpha w \cdot \mathcal{Z}^\alpha u ~d\sigma\\
&\le \varepsilon |n\times \mathcal{Z}^\alpha w|_{L^2(\partial \Omega)}
         |\mathcal{Z}^\alpha u|_{L^2(\partial \Omega)}\\
&\le C_{m+2}\varepsilon
          (\|\nabla^2 u\|_{\mathcal{H}^{m-1}}
          +\|\nabla u\|_{\mathcal{H}^{m-1}}
          +\|\nabla u\|_{\mathcal{H}^{m}}
          +\|u\|_{\mathcal{H}^{m}})\\
&\quad \quad \times (\|\nabla u\|_{\mathcal{H}^{m}}^\frac{1}{2}+
         \|u\|_{\mathcal{H}^{m}}^\frac{1}{2})\|u\|_{\mathcal{H}^{m}}^\frac{1}{2}\\
&\le \delta \varepsilon \|\nabla u\|_{\mathcal{H}^{m}}^2
     +\delta \varepsilon^2 \|\nabla^2 u\|_{\mathcal{H}^{m-1}}^2
     +C_\delta C_{m+2}(\|\nabla u\|_{\mathcal{H}^{m-1}}^2+\|u\|_{\mathcal{H}^{m}}^2).
\end{aligned}
\end{equation}
Substituting \eqref{325} and \eqref{326} into \eqref{324}, we find
\begin{equation}\label{327}
\begin{aligned}
&-\varepsilon \int \mathcal{Z}^\alpha \nabla \times (\nabla \times u)\cdot \mathcal{Z}^\alpha u~ dx\\
&\le -\frac{3\varepsilon}{4}\int |\nabla \times \mathcal{Z}^\alpha u|^2 dx
      +\delta_1 \varepsilon \|\nabla u\|_{\mathcal{H}^{m}}^2
      +\delta_1 \varepsilon^2 \|\nabla^2 u\|_{\mathcal{H}^{m-1}}^2\\
&\quad  +C_{\delta_1} C_{m+2}(\|\nabla u\|_{\mathcal{H}^{m-1}}^2
        +\|u\|_{\mathcal{H}^{m}}^2).
\end{aligned}
\end{equation}
On the other hand, it follows from Proposition \ref{prop2.1} that
\begin{equation}\label{328}
\begin{aligned}
\|\nabla \mathcal{Z}^\alpha u\|^2
&\le  C(\|\nabla \times \mathcal{Z}^\alpha u\|^2+\|{\rm div} \mathcal{Z}^\alpha u\|^2
        +\|\mathcal{Z}^\alpha u\|^2
        +|\mathcal{Z}^\alpha u \cdot n|_{H^{\frac{1}{2}}(\partial \Omega)})\\
&\le  \|\nabla \times \mathcal{Z}^\alpha u\|^2+\|{\rm div} \mathcal{Z}^\alpha u\|^2
        +C_{m+2}(\|u\|_{\mathcal{H}^{m}}^2+\|\nabla u\|_{\mathcal{H}^{m-1}}^2),
\end{aligned}
\end{equation}
where we have used the fact that
\begin{equation*}
\begin{aligned}
&|\mathcal{Z}^\alpha u \cdot n|_{H^{\frac{1}{2}}(\partial \Omega)}
\le C_{m+2}|\partial^{\alpha_0}_t u |_{H^{m-\alpha_0-\frac{1}{2}}(\partial \Omega)}\\
&\le C_{m+2}(\|\nabla\partial^{\alpha_0}_t u\|_{m-\alpha_0-1}
       +\|\partial^{\alpha_0}_t u\|_{m-\alpha_0})\\
&\le C_{m+2}(\|\nabla u\|_{\mathcal{H}^{m-1}}+\|u\|_{\mathcal{H}^{m}}),
\end{aligned}
\end{equation*}
which is a consequence of boundary condition \eqref{bc2} and \eqref{21}.
The combination of \eqref{327} and \eqref{328} gives directly
\begin{equation}\label{329}
\begin{aligned}
&-\varepsilon \int \mathcal{Z}^\alpha \nabla \times (\nabla \times u)\cdot \mathcal{Z}^\alpha u dx\\
&\le -\frac{3\varepsilon}{4}\int |\nabla \mathcal{Z}^\alpha u|^2 dx
      +\delta_1 \varepsilon \|\nabla u\|_{\mathcal{H}^{m}}^2
      +\delta_1 \varepsilon^2 \|\nabla^2 u\|_{\mathcal{H}^{m-1}}^2\\
&\quad  +C_{\delta_1} C_{m+2}(\|\nabla u\|_{\mathcal{H}^{m-1}}^2
        +\|u\|_{\mathcal{H}^{m}}^2).
\end{aligned}
\end{equation}
Here we have used the divergence free condition of the velocity.
Integrating \eqref{323} over $[0, t]$ and substituting \eqref{329} into the resulting
identity, one arrives at
\begin{equation}\label{3210}
\begin{aligned}
&\frac{1}{2}\int |\mathcal{Z}^\alpha u|^2 dx
+\frac{3\varepsilon}{4}\int_0^t \int |\nabla \mathcal{Z}^\alpha u|^2 dx d\tau\\
&\le \frac{1}{2}\int |\mathcal{Z}^\alpha u_0|^2 dx
     +\delta_1 \varepsilon \int_0^t\|\nabla u\|_{\mathcal{H}^{m}}^2 d\tau
      +\delta_1 \varepsilon^2 \int_0^t\|\nabla^2 u\|_{\mathcal{H}^{m-1}}^2d\tau\\
&\quad   +C_{\delta_1} C_{m+2}\int_0^t(\|\nabla u\|_{\mathcal{H}^{m-1}}^2
        +\|u\|_{\mathcal{H}^{m}}^2)d\tau
     +\int_0^t \int \mathcal{C}_1^\alpha \cdot \mathcal{Z}^\alpha u ~dxd\tau\\
&\quad
 -\int_0^t \int \mathcal{Z}^\alpha \nabla p \cdot \mathcal{Z}^\alpha u ~dx d\tau
-\int_0^t \int \mathcal{Z}^\alpha(\nabla d \cdot \Delta d)\cdot \mathcal{Z}^\alpha u~dxd\tau.
\end{aligned}
\end{equation}
To deal with the term $\int_0^t \int C_1^\alpha \cdot \mathcal{Z}^\alpha u ~dxd\tau$.
Indeed, it is easy to deduce
\begin{equation}\label{3211}
\begin{aligned}
\int_0^t \int C_1^\alpha \cdot \mathcal{Z}^\alpha u dxd\tau
&=-\int_0^t \int \left([\mathcal{Z}^\alpha, u]\cdot \nabla u
         +u \cdot [\mathcal{Z}^\alpha, \nabla]u\right)\cdot\mathcal{Z}^\alpha udxd\tau\\
&\le \int_0^t\left(\|[\mathcal{Z}^\alpha, u]\cdot \nabla u\|
            +\|u \cdot [\mathcal{Z}^\alpha, \nabla]u\| \right) \|\mathcal{Z}^\alpha u\|d\tau.
\end{aligned}
\end{equation}
The help of Proposition \ref{prop2.2} yields directly
\begin{equation}\label{3212}
\begin{aligned}
&\int_0^t \|[\mathcal{Z}^\alpha, u]\cdot \nabla u\|^2 d\tau\\
&\le \sum_{|\beta|+|\gamma|=m,|\beta|\ge1}C_{\beta, \gamma}
     \int_0^t \|\mathcal{Z}^\beta u\cdot \mathcal{Z}^\gamma \nabla u\|^2 d\tau\\
&\le  \|\mathcal{Z}u\|_{L^\infty_{x,t}}^2\int_0^t \|\nabla u\|_{\mathcal{H}^{m-1}}^2 d\tau
+C\|\nabla u\|_{L^\infty_{x,t}}^2\int_0^t \|\mathcal{Z} u\|_{\mathcal{H}^{m-1}}^2 d\tau\\
&\le C_1 Q(t)\int_0^t (\|\nabla u\|_{\mathcal{H}^{m-1}}^2+\|u\|_{\mathcal{H}^{m}}^2) d\tau
\end{aligned}
\end{equation}
and
\begin{equation}\label{3213}
\begin{aligned}
\int_0^t \|u \cdot [\mathcal{Z}^\alpha, \nabla]u\|^2 d\tau
\le \sum_{|\beta|\le m-1}\|u\|_{L^\infty_{x,t}}^2
     \int_0^t \|\mathcal{Z}^\beta \nabla u\|^2 d\tau
\le C \| u\|_{L^\infty_{x,t}}^2\int_0^t \|\nabla u\|_{\mathcal{H}^{m-1}}^2 d\tau.
\end{aligned}
\end{equation}
The combination of \eqref{3211}-\eqref{3213} yields that
\begin{equation}\label{3214}
\int_0^t \int C_1^\alpha \cdot \mathcal{Z}^\alpha u~ dxd\tau
\le C_1 Q(t)\int_0^t(\|\nabla u\|_{\mathcal{H}^{m-1}}^2+\|u\|_{\mathcal{H}^{m}}^2)d\tau.
\end{equation}
To deal with the pressure term
$-\int_0^t \int \mathcal{Z}^\alpha \nabla p \cdot \mathcal{Z}^\alpha u ~dx d\tau$.
Since $\mathcal{Z}^\alpha u\cdot n$ does not vanish on the boundary, we follow the idea as
Masmoudi and Rousset \cite{Masmoudi-Rousset}
to split the pressure $p$ into the Euler part and Navier-Stokes part.
More precisely, the pressure $p$ is split as $p=p_1+p_2$, where $p_1$ is the
"Euler" part of the pressure which solves
\begin{equation*}
\Delta p_1=-{\rm div}(u\cdot \nabla u+\nabla d \cdot \Delta d),~x\in \Omega,~
\partial_n p_1=-(u\cdot \nabla u+\nabla d \cdot \Delta d)\cdot n,~x\in \partial \Omega,
\end{equation*}
and $p_2$ is the "Navier-Stokes part" which solves
\begin{equation*}
\Delta p_2=0,~x\in \Omega,
~\partial_n p_2=(\varepsilon \Delta u-u_t)\cdot n,~x\in \Omega,
\end{equation*}
which, together with the boundary condition \eqref{bc2}, gives directly
\begin{equation}\label{3215}
\Delta p_1=-{\rm div}(u\cdot \nabla u+\nabla d \cdot \Delta d),~x\in \Omega,~
\partial_n p_1=-u\cdot \nabla u\cdot n,~x\in \partial \Omega,
\end{equation}
and
\begin{equation}\label{3216}
\Delta p_2=0,~x\in \Omega,
~\partial_n p_2=\varepsilon \Delta u\cdot n,~x\in \Omega.
\end{equation}
Then, it is easy to deduce that
\begin{equation}\label{3217}
-\int_0^t \int \mathcal{Z}^\alpha \nabla p \cdot \mathcal{Z}^\alpha u dxd\tau
\le \int_0^t\|\nabla^2 p_1\|_{\mathcal{H}^{m-1}}\|u\|_{\mathcal{H}^{m}} d\tau
   +\left|\int_0^t \int \mathcal{Z}^\alpha \nabla p_2\cdot \mathcal{Z}^\alpha u dxd\tau\right|.
\end{equation}
For the last term, we have
\begin{equation}\label{3218}
\begin{aligned}
&\left|\int_0^t \int \mathcal{Z}^\alpha \nabla p_2\cdot \mathcal{Z}^\alpha u dxd\tau\right|\\
&=\left|\int_0^t\int \nabla \mathcal{Z}^\alpha p_2 \cdot \mathcal{Z}^\alpha u dxd\tau
+\int_0^t\int [\mathcal{Z}^\alpha, \nabla]p_2 \cdot \mathcal{Z}^\alpha u dxd\tau\right|\\
&\le \left|\int_0^t\int \nabla \mathcal{Z}^\alpha p_2 \cdot \mathcal{Z}^\alpha u dxd\tau\right|
+C_{m+1}\int_0^t\|\nabla p_2\|_{\mathcal{H}^{m-1}} \|u\|_{\mathcal{H}^{m}} d\tau.
\end{aligned}
\end{equation}
On the other hand, integrating by part and applying the divergence free of
velocity, we find
\begin{equation}\label{3219}
\begin{aligned}
&\left|\int_0^t \int \nabla \mathcal{Z}^\alpha p_2 \cdot \mathcal{Z}^\alpha u dxd\tau\right|\\
&=\left|\int_0^t\int_{\partial \Omega}\mathcal{Z}^\alpha p_2 \mathcal{Z}^\alpha u \cdot nd\sigma d\tau
+\int_0^t \int \mathcal{Z}^\alpha p_2 [\mathcal{Z}^\alpha, \nabla \cdot]u dx\tau\right|\\
&\le \left|\int_0^t\int_{\partial \Omega}\mathcal{Z}^\alpha p_2 \mathcal{Z}^\alpha u \cdot nd\sigma d\tau\right|
+\int_0^t\|\nabla p_2\|_{\mathcal{H}^{m-1}}\|\nabla u\|_{\mathcal{H}^{m-1}}d\tau.
\end{aligned}
\end{equation}
First, we deal with the boundary term when $\alpha_{13}=0$
(for $\alpha_{13}\neq 0, \mathcal{Z}^\alpha u=0$ on the boundary)
on the right hand side of \eqref{3219}.
If $|\alpha_0|=|\alpha|$, it is easy to deduce that
\begin{equation}\label{3220}
\int_0^t\int_{\partial \Omega}\mathcal{Z}^\alpha p_2
\mathcal{Z}^\alpha u \cdot nd\sigma d\tau=0.
\end{equation}
If $|\alpha_1|\ge 1$,
we integrating by parts along the boundary to obtain
\begin{equation}\label{3220-1}
\left|\int_0^t\int_{\partial \Omega}\mathcal{Z}^\alpha p_2 \mathcal{Z}^\alpha u \cdot nd\sigma d\tau\right|
\le C_2 \int_0^t |\partial_t^{\alpha_0}Z_y^{\beta} p_2|_{L^2(\partial \Omega)}
|\mathcal{Z}^\alpha u \cdot n|_{H^1(\partial \Omega)}d\tau,
\end{equation}
where $|\beta|= m-1-\alpha_0$. Applying the trace Theorem in Proposition \ref{prop2.3}, we find
\begin{equation}\label{3221}
|\partial_t^{\alpha_0}Z_y^{\beta} p_2|_{L^2(\partial \Omega)}
\le C \|\nabla \partial_t^{\alpha_0}Z_y^{\beta} p_2\|
      +\|\partial_t^{\alpha_0}Z_y^{\beta} p_2\|_1
\le C_1 \|\nabla p_2\|_{\mathcal{H}^{m-1}},
\end{equation}
and
\begin{equation}\label{3222}
|\mathcal{Z}^\alpha u \cdot n|_{H^1(\partial \Omega)}
\le C_{m+2}(\|\nabla u\|_{\mathcal{H}^{m}}+\|u\|_{\mathcal{H}^{m}}).
\end{equation}
Substituting \eqref{3220}-\eqref{3222} into \eqref{3219}, we find
\begin{equation*}
\left|\int_0^t \int \mathcal{Z}^\alpha \nabla p_2 \cdot \mathcal{Z}^\alpha u dxd\tau \right|
\le C_{m+2}\int_0^t\|\nabla p_2\|_{\mathcal{H}^{m-1}}
      (\|\nabla u\|_{\mathcal{H}^{m}}+\|u\|_{\mathcal{H}^{m}})d\tau,
\end{equation*}
which, together with \eqref{3217}, reads
\begin{equation}\label{3223}
\begin{aligned}
&\left|\int_0^t\int \mathcal{Z}^\alpha \nabla p \cdot \mathcal{Z}^\alpha u dxd\tau\right|\\
&\le C_{m+2}\!\!\int_0^t\!\! \|\nabla^2 p_1\|_{\mathcal{H}^{m-1}}\|u\|_{\mathcal{H}^{m}} d\tau
+C_{m+2}\!\!\int_0^t\!\!\|\nabla p_2\|_{\mathcal{H}^{m-1}}
      (\|\nabla u\|_{\mathcal{H}^{m}}+\|u\|_{\mathcal{H}^{m}})d\tau.
\end{aligned}
\end{equation}
We apply the Proposition \ref{prop2.2} to deduce that
\begin{equation*}\label{3225}
\begin{aligned}
\int_0^t \|\mathcal{Z}^\alpha(\nabla d\cdot \Delta d)\|^2 d\tau
\le \|\nabla d\|_{L^\infty_{x,t}}^2\int_0^t \|\Delta d\|_{\mathcal{H}^{m}}^2d\tau
    +\|\Delta d\|_{L^\infty_{x,t}}^2\int_0^t \|\nabla d\|_{\mathcal{H}^{m}}^2d\tau,
\end{aligned}
\end{equation*}
which implies that
\begin{equation}\label{3225}
\begin{aligned}
&\left|-\int_0^t \int \mathcal{Z}^\alpha(\nabla d \cdot \Delta d)\cdot \mathcal{Z}^\alpha u~dxd\tau\right|\\
&\le \delta_1 \int_0^t \|\Delta d\|_{\mathcal{H}^{m}}^2d\tau
+C_{\delta_1}(\|\nabla d\|_{L^\infty_{x,t}}^2+\|\Delta d\|_{L^\infty_{x,t}}^2)
\int_0^t(\|u\|_{\mathcal{H}^{m}}^2+\|\nabla d\|_{\mathcal{H}^{m}}^2)d\tau.
\end{aligned}
\end{equation}
Substituting \eqref{3214}, \eqref{3223} and \eqref{3225} into \eqref{3210}, it is easy to deduce
\begin{equation}\label{3227}
\begin{aligned}
&\frac{1}{2}\int |\mathcal{Z}^\alpha u|^2 dx
+\frac{3\varepsilon}{4}\int_0^t \int |\nabla \mathcal{Z}^\alpha u|^2 dx d\tau\\
&\le \frac{1}{2}\int |\mathcal{Z}^\alpha u_0|^2 dx
     +\delta_1 \varepsilon \int_0^t\|\nabla u\|_{\mathcal{H}^{m}}^2 d\tau
      +\delta_1 \int_0^t\|\Delta d\|_{\mathcal{H}^{m}}^2d\tau
      +\delta \varepsilon^2 \int_0^t\|\nabla^2 u\|_{\mathcal{H}^{m-1}}^2d\tau\\
&\quad
+C_\delta(1+Q(t))\int_0^t N_m (\tau)d\tau
+C_{m+2}C_\delta\int_0^t (\|\nabla^2 p_1\|_{\mathcal{H}^{m-1}}
      \|u\|_{\mathcal{H}^{m}} +\varepsilon^{-1}\|\nabla p_2\|_{\mathcal{H}^{m-1}}^2)d\tau.
\end{aligned}
\end{equation}
Applying the operator $\mathcal{Z}^\alpha \nabla(|\alpha_0|+|\alpha_1|=m$)
to the equation \eqref{eq1}$_2$, we find
\begin{equation}\label{3228}
\mathcal{Z}^\alpha \nabla d_t -\mathcal{Z}^\alpha \nabla \Delta d
=-\mathcal{Z}^\alpha \nabla(u\cdot \nabla d)
 +\mathcal{Z}^\alpha \nabla(|\nabla d|^2 d).
\end{equation}
Multiplying \eqref{3228} by $\mathcal{Z}^\alpha \nabla d$,
it is easy to deduce that
\begin{equation}\label{3229}
\begin{aligned}
&\frac{1}{2}\frac{d}{dt}\int |\mathcal{Z}^\alpha \nabla d|^2 dx
-\int \mathcal{Z}^\alpha \nabla \Delta d \cdot \mathcal{Z}^\alpha \nabla d~dx\\
&=-\int \mathcal{Z}^\alpha \nabla(u\cdot \nabla d)\cdot \mathcal{Z}^\alpha \nabla d~ dx
+\int \mathcal{Z}^\alpha \nabla(|\nabla d|^2 d)\cdot \mathcal{Z}^\alpha \nabla d ~dx.
\end{aligned}
\end{equation}
Integrating by part, we find
\begin{equation*}
\begin{aligned}
&-\int \mathcal{Z}^\alpha \nabla \Delta d \cdot \mathcal{Z}^\alpha \nabla d~dx\\
&=-\int \nabla \mathcal{Z}^\alpha  \Delta d \cdot \mathcal{Z}^\alpha \nabla d~dx
  -\int [\mathcal{Z}^\alpha, \nabla] \Delta d \cdot \mathcal{Z}^\alpha \nabla d~dx\\
&=-\int_{\partial \Omega} \mathcal{Z}^\alpha  \Delta d \cdot \mathcal{Z}^\alpha \nabla d
    \cdot n~d\sigma
  +\int \mathcal{Z}^\alpha  \Delta d \cdot {\rm div}(\mathcal{Z}^\alpha \nabla d)~dx
  -\int [\mathcal{Z}^\alpha, \nabla] \Delta d \cdot \mathcal{Z}^\alpha \nabla d~dx\\
&=-\int_{\partial \Omega} \mathcal{Z}^\alpha  \Delta d \cdot \mathcal{Z}^\alpha \nabla d
    \cdot n~d\sigma
  +\int |{\rm div}(\mathcal{Z}^\alpha \nabla d)|^2dx
+\int [\mathcal{Z}^\alpha, {\rm div}]\nabla d \cdot {\rm div}(\mathcal{Z}^\alpha \nabla d)~dx\\
&\quad -\int [\mathcal{Z}^\alpha, \nabla] \Delta d \cdot \mathcal{Z}^\alpha \nabla d~dx.
\end{aligned}
\end{equation*}
This, together with \eqref{3229}, reads
\begin{equation}\label{3230}
\begin{aligned}
&\frac{1}{2}\int |\mathcal{Z}^\alpha \nabla d(t)|^2 dx
+\int_0^t \int |{\rm div}(\mathcal{Z}^\alpha \nabla d)|^2dxd\tau\\
&=\frac{1}{2}\int |\mathcal{Z}^\alpha \nabla d_0|^2 dx
-\int_0^t \!\!\int \mathcal{Z}^\alpha \nabla(u\cdot \nabla d)\cdot \mathcal{Z}^\alpha \nabla d~ dxd\tau\\
&\quad
+\int_0^t\!\!\int \mathcal{Z}^\alpha \nabla(|\nabla d|^2 d)\cdot \mathcal{Z}^\alpha \nabla d ~dxd\tau
-\int_0^t\int [\mathcal{Z}^\alpha, {\rm div}]\nabla d \cdot {\rm div}(\mathcal{Z}^\alpha \nabla d)~dxd\tau\\
&\quad
+\int_0^t\int [\mathcal{Z}^\alpha, \nabla] \Delta d \cdot \mathcal{Z}^\alpha \nabla d~dxd\tau
+\int_0^t\int_{\partial \Omega} \mathcal{Z}^\alpha  \Delta d \cdot \mathcal{Z}^\alpha \nabla d\cdot n~d\sigma d\tau.
\end{aligned}
\end{equation}
Integrating by part, it is easy to deduce
\begin{equation}\label{3231}
\begin{aligned}
&-\int_0^t \int \mathcal{Z}^\alpha \nabla(u\cdot \nabla d)\cdot \mathcal{Z}^\alpha \nabla d~ dx d\tau\\
&=-\int_0^t\int  \nabla \mathcal{Z}^\alpha(u\cdot \nabla d)\cdot \mathcal{Z}^\alpha \nabla d~ dxd\tau
  -\int_0^t\int  [\mathcal{Z}^\alpha, \nabla](u\cdot \nabla d)\cdot \mathcal{Z}^\alpha \nabla d~ dxd\tau\\
&=-\int_0^t\int_{\partial \Omega } \mathcal{Z}^\alpha(u\cdot \nabla d)\cdot
    \mathcal{Z}^\alpha \nabla d\cdot n~ d\sigma d\tau
-\int_0^t\int \mathcal{Z}^\alpha(u\cdot \nabla d)\cdot{\rm div} (\mathcal{Z}^\alpha \nabla d)~ dxd\tau\\
&\quad -\int_0^t\int[\mathcal{Z}^\alpha, \nabla](u\cdot \nabla d)\cdot \mathcal{Z}^\alpha \nabla d~ dxd\tau.
\end{aligned}
\end{equation}
To estimate the boundary term on the right hand side of \eqref{3231}.
If $|\alpha_0|=m$ or $|\alpha_{13}|\ge 1$, we obtain
$$
-\int_0^t\int_{\partial \Omega } \mathcal{Z}^\alpha(u\cdot \nabla d)\cdot
    \mathcal{Z}^\alpha \nabla d\cdot n~ d\sigma d\tau=0.
$$
For $|\beta|=m-1-\alpha_0(|\alpha_0|\le m-1)$,
we integrating by part along the boundary to deduce that
\begin{equation}\label{3232}
-\int_0^t\int_{\partial \Omega } \mathcal{Z}^\alpha(u\cdot \nabla d)\cdot
    \mathcal{Z}^\alpha \nabla d\cdot n~ d\sigma d\tau
\le \int_0^t |\partial_t^{\alpha_0}Z_y^\beta (u\cdot \nabla d)|_{L^2{(\partial \Omega)}}
      |\mathcal{Z}^\alpha \nabla d\cdot n|_{H^1(\partial \Omega)}d\tau.
\end{equation}
Applying the trace theorem in Proposition \ref{prop2.3} and Proposition \ref{prop2.2},
one arrives at
\begin{equation}\label{3233}
\begin{aligned}
&\int_0^t |\partial_t^{\alpha_0}Z_y^\beta (u\cdot \nabla d)|_{L^2{(\partial \Omega)}}^2d\tau\\
&\le C\int_0^t (\|\nabla \partial_t^{\alpha_0} (u\cdot \nabla d)\|_{m-1-\alpha_0}^2
              +\| \partial_t^{\alpha_0} (u\cdot \nabla d)\|_{m-1-\alpha_0}^2)d\tau\\
&\le C\int_0^t(\|\nabla(u\cdot \nabla d)\|_{\mathcal{H}^{m-1}}^2
              +\|u\cdot \nabla d\|_{\mathcal{H}^{m-1}}^2)d\tau\\
&\le CQ(t)\int_0^t(\|(u,\nabla d)\|_{\mathcal{H}^{m-1}}^2
     +\|\nabla(u,\nabla d)\|_{\mathcal{H}^{m-1}}^2)d\tau.
\end{aligned}
\end{equation}
With the help of boundary condition \eqref{bc2}
and trace theorem in Proposition \ref{prop2.3}, we find
\begin{equation}\label{3234}
\int_0^t |\mathcal{Z}^\alpha \nabla d\cdot n|_{H^1(\partial \Omega)}^2d\tau
\le C_{m+2}\int_0^t (\|\nabla^2 d\|_{\mathcal{H}^{m}}^2
+\|\nabla d\|_{\mathcal{H}^{m}}^2)d\tau.
\end{equation}
The combination of \eqref{3231}-\eqref{3234} and Young inequality, it is easy to deduce that
\begin{equation}\label{3234}
\begin{aligned}
&-\int_0^t\int_{\partial \Omega } \mathcal{Z}^\alpha(u\cdot \nabla d)\cdot
    \mathcal{Z}^\alpha \nabla d\cdot n~ d\sigma d\tau\\
&\le \delta_1  \!\int_0^t \! \|\nabla^2 d\|_{\mathcal{H}^{m}}^2 d\tau
     +C_{\delta_1}C_{m+2}(1+Q(t))
     \!\ \! \int_0^t \! \!(\|(u,\nabla d)\|_{\mathcal{H}^{m}}^2+\|\nabla(u,\nabla d)\|_{\mathcal{H}^{m-1}}^2)d\tau.
\end{aligned}
\end{equation}
Applying the Young inequality and the Proposition \ref{prop2.2},
one arrives at
\begin{equation}\label{3235}
\begin{aligned}
&-\int_0^t\int \mathcal{Z}^\alpha(u\cdot \nabla d)\cdot{\rm div} (\mathcal{Z}^\alpha \nabla d)~ dxd\tau\\
&\le \delta_1\int_0^t\|{\rm div} (\mathcal{Z}^\alpha \nabla d)\|^2d\tau
     +C_{\delta_1} \|u\|_{L^\infty_{x,t}}^2\int_0^t \|\nabla d\|_{\mathcal{H}^{m}}^2 d\tau
+C_{\delta_1} \|\nabla d\|_{L^\infty_{x,t}}^2\int_0^t \|u\|_{\mathcal{H}^{m}}^2 d\tau\\
&\le \delta_1\int_0^t\|{\rm div} (\mathcal{Z}^\alpha \nabla d)\|^2d\tau
     +C_{\delta_1} C_1 Q(t)
     \int_0^t (\|u\|_{\mathcal{H}^{m}}^2
      +\|\nabla d\|_{\mathcal{H}^{m}}^2) d\tau\\
\end{aligned}
\end{equation}
and
\begin{equation}\label{3236}
\begin{aligned}
&-\int_0^t\int[\mathcal{Z}^\alpha, \nabla](u\cdot \nabla d)\cdot \mathcal{Z}^\alpha \nabla d~ dxd\tau\\
&\le  \sum_{|\beta|\le m-1}\int_0^t \|\mathcal{Z}^\beta(\nabla u\cdot \nabla d+
        u\cdot \nabla^2 d)\| \|\mathcal{Z}^\alpha \nabla d\|dx\\
&\le C\int_0^t \|\nabla d\|_{\mathcal{H}^{m}}^2 d\tau
      +C(\|\nabla u\|_{L^\infty_{x,t}}^2+\|\nabla d\|_{L^\infty_{x,t}}^2)
      \int_0^t (\|\nabla u\|_{\mathcal{H}^{m-1}}^2+\|\nabla d\|_{\mathcal{H}^{m-1}}^2)d\tau\\
&\quad  +C(\|u\|_{L^\infty_{x,t}}^2+\|\nabla^2 d\|_{L^\infty_{x,t}}^2)
      \int_0^t (\|u\|_{\mathcal{H}^{m-1}}^2+\|\nabla^2 d\|_{\mathcal{H}^{m-1}}^2)d\tau\\
&\le C(1+Q(t))
      \int_0^t (\|u\|_{\mathcal{H}^{m-1}}^2+\|\nabla u\|_{\mathcal{H}^{m-1}}^2
       +\|\nabla d\|_{\mathcal{H}^{m}}^2
      +\|\nabla^2 d\|_{\mathcal{H}^{m-1}}^2)d\tau.
\end{aligned}
\end{equation}
Substituting \eqref{3234}-\eqref{3236} into \eqref{3231}, we obtain
\begin{equation}\label{3237}
\begin{aligned}
&-\int_0^t \int \mathcal{Z}^\alpha \nabla(u\cdot \nabla d)\cdot \mathcal{Z}^\alpha \nabla d~ dx d\tau\\
&\le \delta_1 \int_0^t \|\nabla^2 d\|_{\mathcal{H}^{m}}^2 d\tau
     +C_{\delta_1}C_1(1+Q(t))
     \int_0^t(\|(u,\nabla d)\|_{\mathcal{H}^{m}}^2+\|\nabla(u,\nabla d)\|_{\mathcal{H}^{m-1}}^2)d\tau.
\end{aligned}
\end{equation}
On the other hand, it is complicated to deal with the term
$\int_0^t\int \mathcal{Z}^\alpha \nabla(|\nabla d|^2 d)\cdot \mathcal{Z}^\alpha \nabla d ~dxd\tau$. Indeed, by integrating by part, one arrives at
\begin{equation}\label{3238}
\begin{aligned}
&\int_0^t\int \mathcal{Z}^\alpha \nabla(|\nabla d|^2 d)\cdot \mathcal{Z}^\alpha \nabla d ~dxd\tau\\
&=\int_0^t\int \nabla \mathcal{Z}^\alpha (|\nabla d|^2 d)\cdot \mathcal{Z}^\alpha \nabla d ~dxd\tau
+\int_0^t\int [\mathcal{Z}^\alpha, \nabla ](|\nabla d|^2 d)\cdot \mathcal{Z}^\alpha \nabla d ~dxd\tau\\
&=
-\int_0^t\int \mathcal{Z}^\alpha (|\nabla d|^2 d)\cdot {\rm div}(\mathcal{Z}^\alpha \nabla d) ~dxd\tau
+\int_0^t\int [\mathcal{Z}^\alpha, \nabla ](|\nabla d|^2 d)\cdot \mathcal{Z}^\alpha \nabla d ~dxd\tau\\
&\quad
+\int_0^t\int_{\partial \Omega} \mathcal{Z}^\alpha (|\nabla d|^2 d)
  \cdot \mathcal{Z}^\alpha \nabla d \cdot n~d\sigma d\tau.
\end{aligned}
\end{equation}
It is easy to deduce that
\begin{equation}\label{3239}
\begin{aligned}
&-\int_0^t\int \mathcal{Z}^\alpha (|\nabla d|^2 d)\cdot {\rm div}(\mathcal{Z}^\alpha \nabla d) ~dxd\tau\\
&=-\underset{|\beta| \ge 1}{\sum}\int_0^t\int\mathcal{Z}^\gamma(|\nabla d|^2)\mathcal{Z}^\beta d\cdot {\rm div}(\mathcal{Z}^\alpha \nabla d) ~dxd\tau
-\int_0^t\int \mathcal{Z}^\alpha(|\nabla d|^2)d \cdot {\rm div}(\mathcal{Z}^\alpha \nabla d) ~dxd\tau.
\end{aligned}
\end{equation}
By virtue of the Proposition \ref{prop2.2}, we obtain
\begin{equation}\label{3240}
\begin{aligned}
&\underset{|\beta| \ge 1}{\sum}\int_0^t
\|\mathcal{Z}^\gamma(|\nabla d|^2)\mathcal{Z}^\beta d\|^2d\tau\\
&\le  \|\mathcal{Z}d\|_{L^\infty_{x,t}}^2\int_0^t \||\nabla d|^2\|_{\mathcal{H}^{m-1}}^2 d\tau
      +\||\nabla d|^2\|_{L^\infty_{x,t}}^2
      \int_0^t \| \mathcal{Z} d\|_{\mathcal{H}^{m-1}}^2 d\tau\\
&\le \|\mathcal{Z}d\|_{L^\infty_{x,t}}^2\|\nabla d\|_{L^\infty_{x,t}}^2
     \int_0^t \|\nabla d\|_{\mathcal{H}^{m-1}}^2 d\tau
      +\|\nabla d\|_{L^\infty_{x,t}}^4 \int_0^t \|\mathcal{Z}d\|_{\mathcal{H}^{m-1}}^2d\tau\\
&\le \|\nabla d\|_{L^\infty_{x,t}}^4 \int_0^t \|\partial_td\|_{\mathcal{H}^{m-1}}^2d\tau
     +C_1(1+P(Q(t)))\int_0^t N_m(\tau)d\tau.
\end{aligned}
\end{equation}
By virtue of equation \eqref{eq1}$_2$, we find
\begin{equation}\label{3242}
\begin{aligned}
\int_0^t \|\partial_td\|_{\mathcal{H}^{m-1}}^2d\tau
&\le \|u\|_{L^\infty_{x,t}}^2\int_0^t \|\nabla d\|_{\mathcal{H}^{m-1}}^2d\tau
     +\|\nabla d\|_{L^\infty_{x,t}}^2\int_0^t \|u\|_{\mathcal{H}^{m-1}}^2d\tau\\
&\quad +\int_0^t \|\Delta d\|_{\mathcal{H}^{m-1}}^2d\tau
       +\int_0^t \||\nabla d|^2 d\|_{\mathcal{H}^{m-1}}^2 d\tau.
\end{aligned}
\end{equation}
By virtue of the Proposition \ref{prop2.2}, we obtain
\begin{equation}\label{3243}
\begin{aligned}
\int_0^t \||\nabla d|^2 d\|_{\mathcal{H}^{m-1}}^2 d\tau
&\le \sum_{|\gamma|\ge 1,|\beta|+|\gamma|\le m-1}
     \int_0^t \|\mathcal{Z}^\beta(|\nabla d|^2)\mathcal{Z}^\gamma d\|^2 d\tau
     +\int_0^t \||\nabla d|^2\|_{\mathcal{H}^{m-1}}^2d\tau\\
&\le \|\mathcal{Z}d\|_{L^\infty_{x,t}}^2 \|\nabla d\|_{L^\infty_{x,t}}^2
     \int_0^t \|\nabla d\|_{\mathcal{H}^{m-2}}^2d\tau
     +\|\nabla d\|_{L^\infty_{x,t}}^4\int_0^t \|\nabla d\|_{\mathcal{H}^{m-2}}^2 d\tau\\
&\quad +\|\nabla d\|_{L^\infty_{x,t}}^4\int_0^t \|\partial_t d\|_{\mathcal{H}^{m-2}}^2 d\tau
       +\|\nabla d\|_{L^\infty_{x,t}}^2\int_0^t \|\nabla d\|_{\mathcal{H}^{m-1}}^2 d\tau.
\end{aligned}
\end{equation}
Substituting \eqref{3243} into \eqref{3242}, we obtain
\begin{equation}\label{3244}
\begin{aligned}
\int_0^t \|d_t\|_{\mathcal{H}^{m-1}}^2d\tau
&\le \|\nabla d\|_{L^\infty_{x,t}}^4\int_0^t \|\partial_t d\|_{\mathcal{H}^{m-2}}^2 d\tau
    +\int_0^t \|\Delta d\|_{\mathcal{H}^{m-1}}^2d\tau\\
&\quad    +C_1(1+P(Q(t)))\int_0^t (\|u\|_{\mathcal{H}^{m-1}}^2
    +\|\nabla d\|_{\mathcal{H}^{m-1}}^2)d\tau.
\end{aligned}
\end{equation}
On the other hand, it is easy to deduce that
\begin{equation}\label{3245}
\int_0^t \|d_t\|^2 d\tau
\le \int_0^t \|\Delta d\|^2 d\tau+(1+\|\nabla d\|_{L^\infty_{x,t}}^2)
    \int_0^t (\|u\|_{L^2}^2+\|\nabla d\|_{L^2}^2)d\tau.
\end{equation}
The combination  of  \eqref{3244} and \eqref{3245} yields directly
\begin{equation}\label{3246}
\begin{aligned}
\int_0^t  \|d_t\|_{\mathcal{H}^{m-1}}^2 d\tau
&\le C_1(1+P(Q(t)))\int_0^t\!\! \|\Delta d\|_{\mathcal{H}^{m-1}}^2 d\tau\\
&\quad +C_1(1+P(Q(t)))\int_0^t \!\!(\|u\|_{\mathcal{H}^{m-1}}^2
    +\|\nabla d\|_{\mathcal{H}^{m-1}}^2)d\tau,
\end{aligned}
\end{equation}
which, together with \eqref{3240}, gives directly
\begin{equation}\label{3247}
\begin{aligned}
&\left|-\underset{|\beta| \ge 1}{\sum}\int_0^t\int\mathcal{Z}^\gamma(|\nabla d|^2)\mathcal{Z}^\beta d\cdot {\rm div}(\mathcal{Z}^\alpha \nabla d) ~dxd\tau\right|\\
&\le \delta_1 \int_0^t\|{\rm div}(\mathcal{Z}^\alpha \nabla d)\|d\tau
+C_{\delta_1}C_1(1+P(Q(t)))\int_0^t N_m(\tau)d\tau.
\end{aligned}
\end{equation}
In view of the Proposition \ref{prop2.2} and Cauchy inequality, we obtain
\begin{equation}\label{3248}
\begin{aligned}
&\left|-\int_0^t\int \mathcal{Z}^\alpha(|\nabla d|^2)d \cdot {\rm div}(\mathcal{Z}^\alpha \nabla d) ~dxd\tau\right|\\
&\le \delta_1 \int_0^t\|{\rm div}(\mathcal{Z}^\alpha \nabla d)\|d\tau
      +C_{\delta_1}\|\nabla d\|_{L^\infty_{x,t}}^2\int_0^t \|\nabla d\|_{\mathcal{H}^{m}}^2d\tau.
\end{aligned}
\end{equation}
Then combination of  \eqref{3247} and \eqref{3248} yields immediately
\begin{equation}\label{3249}
\begin{aligned}
&-\int_0^t\int \mathcal{Z}^\alpha (|\nabla d|^2 d)\cdot {\rm div}(\mathcal{Z}^\alpha \nabla d) ~dxd\tau\\
&\le \delta_1 \int_0^t\|{\rm div}(\mathcal{Z}^\alpha \nabla d)\|d\tau
+C_1C_{\delta_1}(1+P(Q(t)))\int_0^t N_m(\tau)d\tau.
\end{aligned}
\end{equation}
On the other hand, we find that
\begin{equation}\label{3250}
\begin{aligned}
&\int_0^t \int [\mathcal{Z}^\alpha, \nabla ](|\nabla d|^2 d)\cdot \mathcal{Z}^\alpha \nabla d ~dxd\tau\\
&=\sum_{|\beta|\le m-1}\int_0^t \int
   d \cdot \mathcal{Z}^{\beta} (\nabla d\cdot \nabla^2 d)
   \cdot \mathcal{Z}^\alpha \nabla d ~dxd\tau\\
&\quad +\sum_{|\beta|+|\gamma|\le m-1}^{|\beta|\ge 1|}\int_0^t\int
   \mathcal{Z}^\beta d \cdot \mathcal{Z}^\gamma (\nabla d\cdot \nabla^2 d)
   \cdot \mathcal{Z}^\alpha \nabla d ~dxd\tau\\
&\quad  +\underset{|\beta|+|\gamma|\le m-1}{\sum} \int_0^t\int
   \mathcal{Z}^\beta (|\nabla d|^2)\mathcal{Z}^\gamma \nabla d
   \cdot \mathcal{Z}^\alpha \nabla d ~dxd\tau\\
&=I_1+I_2+I_3.
\end{aligned}
\end{equation}
In view of the Proposition \ref{prop2.2}, we find
\begin{equation}\label{3251}
I_1\le  \|\nabla d\|_{L^\infty_{x,t}}^2\int_0^t \|\nabla^2 d\|_{\mathcal{H}^{m-1}}^2d\tau
        +\|\nabla^2 d\|_{L^\infty_{x,t}}^2\int_0^t \|\nabla d\|_{\mathcal{H}^{m-1}}^2d\tau
        +\int_0^t \|\nabla d\|_{\mathcal{H}^{m}}^2 d\tau.
\end{equation}
and
\begin{equation}\label{3252}
\begin{aligned}
I_2
&\le  C\|\nabla d\|_{W^{1,\infty}_{x,t}}^4
       \int_0^t \|\mathcal{Z} d\|_{\mathcal{H}^{m-2}}^2d\tau
      +C\|\mathcal{Z} d\|_{L^\infty_{x,t}}^2\|\nabla d\|_{L^\infty_{x,t}}^2
        \int_0^t \|\nabla^2 d\|_{\mathcal{H}^{m-2}}^2d\tau\\
&\quad +C\|\mathcal{Z} d\|_{L^\infty_{x,t}}^2\|\nabla^2 d\|_{L^\infty}^2
        \int_0^t \|\nabla d\|_{\mathcal{H}^{m-2}}^2d\tau
       +C\int_0^t \|\nabla d\|_{\mathcal{H}^{m}}^2 d\tau\\
&\le C_1(1+P(Q(t)))\int_0^t N_m(\tau)d\tau,
\end{aligned}
\end{equation}
where we have used the estimate \eqref{3246}.
Similarly, it is easy to deduce that
\begin{equation}\label{3253}
I_3
\le C \|\nabla d\|_{L^\infty_{x,t}}^4 \int_0^t \|\nabla d\|_{\mathcal{H}^{m-1}}^2d\tau
+C\int_0^t \|\nabla d\|_{\mathcal{H}^{m}}^2 d\tau.
\end{equation}
Substituting \eqref{3251}-\eqref{3223} into \eqref{3250}, one arrives at
\begin{equation}\label{3254}
\int_0^t\int [\mathcal{Z}^\alpha, \nabla ](|\nabla d|^2 d)\cdot \mathcal{Z}^\alpha \nabla d ~dxd\tau
\le C_1(1+P(Q(t)))\int_0^t N_m(\tau)d\tau.
\end{equation}
Deal with the boundary term on the right hand side of \eqref{3238}.
If $|\alpha_0|=m$ or $|\alpha_{13}|\ge 1$, we obtain
\begin{equation}\label{3255-1}
\int_0^t\int_{\partial \Omega} \mathcal{Z}^\alpha (|\nabla d|^2 d)
\cdot \mathcal{Z}^\alpha \nabla d \cdot n~d\sigma d\tau
=0.
\end{equation}
Indeed,
it is easy to deduce that for $|\beta|=m-1-\alpha_0$
\begin{equation}\label{3255}
\int_0^t\int_{\partial \Omega} \mathcal{Z}^\alpha (|\nabla d|^2 d)
  \cdot \mathcal{Z}^\alpha \nabla d \cdot n~d\sigma d\tau
\le \int_0^t |\partial_t^{\alpha_0}Z_y^{\beta} (|\nabla d|^2 d)|_{L^2(\partial \Omega)}
    |\mathcal{Z}^\alpha \nabla d \cdot n|_{H^1(\partial \Omega)}d\tau.
\end{equation}
By virtue of the trace theorem in Proposition \ref{prop2.3},
we find for $|\beta|=m-1-\alpha_0$
\begin{equation}\label{3256}
\begin{aligned}
&|\partial_t^{\alpha_0}Z_y^{\beta} (|\nabla d|^2 d)|_{L^2(\partial \Omega)}^2\\
&\le C\|\nabla \partial_t^{\alpha_0}(|\nabla d|^2 d)\|_{m-1-\alpha_0}
       \|\partial_t^{\alpha_0}(|\nabla d|^2 d)\|_{m-1-\alpha_0}
     +C\|\partial_t^{\alpha_0}(|\nabla d|^2 d)\|_{m-1-\alpha_0}^2\\
&\le C\|\nabla(|\nabla d|^2 d)\|_{\mathcal{H}^{m-1}}
       \||\nabla d|^2 d\|_{\mathcal{H}^{m-1}}
    +C\||\nabla d|^2 d\|_{\mathcal{H}^{m-1}}^2,
\end{aligned}
\end{equation}
and
\begin{equation}\label{3257}
|\mathcal{Z}^\alpha \nabla d \cdot n|_{H^1(\partial \Omega)}^2
\le C_{m+2}(\|\nabla^2 d\|_{\mathcal{H}^{m}}^2+\|\nabla d\|_{\mathcal{H}^{m}}^2).
\end{equation}
On the other hand,  we obtain just following the idea as \eqref{3239}
and \eqref{3250} that
\begin{equation}\label{3258}
\int_0^t\||\nabla d|^2 d\|_{\mathcal{H}^{m-1}}^2 d\tau
\le C(1+P(Q(t)))\int_0^t N_m(\tau)d\tau
\end{equation}
and
\begin{equation}\label{3259}
\int_0^t\|\nabla(|\nabla d|^2 d)\|_{\mathcal{H}^{m-1}}^2 d\tau
\le C(1+P(Q(t)))\int_0^t N_m(\tau)d\tau.
\end{equation}
The combination of \eqref{3255}-\eqref{3259} gives directly
\begin{equation}\label{3260}
\begin{aligned}
&\int_0^t\int_{\partial \Omega} \mathcal{Z}^\alpha (|\nabla d|^2 d)
  \cdot \mathcal{Z}^\alpha \nabla d \cdot n~d\sigma d\tau\\
&\le \delta_1 \int_0^t\|\nabla^2 d\|_{\mathcal{H}^{m}}^2d\tau
     +C_{\delta_1}C_{m+2}(1+P(Q(t)))\int_0^t N_m(\tau)d\tau.
\end{aligned}
\end{equation}
Substituting \eqref{3247}, \eqref{3254} and \eqref{3260} into \eqref{3238}, we attains
\begin{equation}\label{3261}
\begin{aligned}
&\int_0^t\int \mathcal{Z}^\alpha \nabla(|\nabla d|^2 d)\cdot \mathcal{Z}^\alpha \nabla d ~dxd\tau\\
&\le \delta_1 \!\int_0^t \!\|{\rm div}(\mathcal{Z}^\alpha \nabla d)\|^2 d\tau
  +\delta_1 \!\int_0^t \!\|\nabla^2 d\|_{\mathcal{H}^{m}}^2d\tau
     +C_{\delta_1}C_{m+2}(1+P(Q(t)))\int_0^t N_m(\tau)d\tau.
\end{aligned}
\end{equation}
In view of the Cauchy inequality, it is easy to deduce that
\begin{equation}\label{3262}
-\int_0^t\int [\mathcal{Z}^\alpha, {\rm div}]\nabla d \cdot {\rm div}(\mathcal{Z}^\alpha \nabla d)~dxd\tau
\le \delta_1 \int_0^t\|{\rm div}(\mathcal{Z}^\alpha \nabla d)\|^2d\tau
     +C_{\delta_1} \int_0^t\|\nabla^2 d\|^2_{\mathcal{H}^{m-1}}d\tau,
\end{equation}
and
\begin{equation}\label{3263}
\int_0^t\int [\mathcal{Z}^\alpha, \nabla] \Delta d \cdot \mathcal{Z}^\alpha \nabla d~dx
\le \delta \int_0^t\|\nabla \Delta d\|_{\mathcal{H}^{m-1}}^2d\tau
    +C_\delta \int_0^t\|\nabla d\|_{\mathcal{H}^{m}}^2 d\tau.
\end{equation}
Finally, we deal with the boundary term on the right hand side of \eqref{3230}.
If $|\alpha_0|=m$ or $|\alpha_{13}|\ge 1$, we obtain
\begin{equation}\label{3264-1}
\int_0^t \int_{\partial \Omega}\mathcal{Z}^\alpha \Delta d
  \cdot \mathcal{Z}^\alpha \nabla d\cdot n d\sigma d\tau
=0.
\end{equation}
On the other hand, integrating by part along the boundary, we have for $|\beta|=m-1-\alpha_0$
\begin{equation}\label{3264}
\int_0^t \int_{\partial \Omega}\mathcal{Z}^\alpha \Delta d
  \cdot \mathcal{Z}^\alpha \nabla d\cdot n d\sigma d\tau
\le C\int_0^t |\partial_t^{\alpha_0}Z_y^\beta \Delta d|_{L^2(\partial \Omega)}
    |\mathcal{Z}^\alpha \nabla d\cdot n|_{H^1(\partial \Omega)}d\tau.
\end{equation}
By virtue of the trace theorem in Proposition \ref{prop2.3}, one arrives at
\begin{equation}\label{3265}
\begin{aligned}
|\partial_t^{\alpha_0}Z_y^\beta \Delta d|_{L^2(\partial \Omega)}
&\le C(\|\nabla \partial_t^{\alpha_0} Z_y^\beta \Delta d\|^{\frac{1}{2}}
      +\|\partial_t^{\alpha_0} Z_y^\beta \Delta d\|^{\frac{1}{2}})
      \| \partial_t^{\alpha_0} Z_y^\beta \Delta d\|^{\frac{1}{2}}\\
&\le C(\|\nabla \Delta d\|_{\mathcal{H}^{m-1}}^{\frac{1}{2}}
      +\|\Delta d\|_{\mathcal{H}^{m-1}}^{\frac{1}{2}})
      \|\Delta d\|_{\mathcal{H}^{m-1}}^{\frac{1}{2}}.
\end{aligned}
\end{equation}
Similarly, with the help of boundary condition
\eqref{bc2} and trace theorem, one arrives at
\begin{equation}\label{3266}
|\mathcal{Z}^\alpha \nabla d\cdot n|_{H^1(\partial \Omega)}
\le C_{m+2}(\|\nabla^2 d\|_{\mathcal{H}^{m}}^{\frac{1}{2}}
     +\|\nabla d\|_{\mathcal{H}^{m}}^{\frac{1}{2}})
     \|\nabla d\|_{\mathcal{H}^{m}}^{\frac{1}{2}}.
\end{equation}
Substituting  \eqref{3265} and \eqref{3266}  into \eqref{3264}
and applying the Young inequality, we find
\begin{equation}\label{3267}
\begin{aligned}
&\int_0^t \int_{\partial \Omega}\mathcal{Z}^\alpha \Delta d
  \cdot \mathcal{Z}^\alpha \nabla d\cdot n d\sigma d\tau\\
&\le \!\delta \!\!\int_0^t \!\! \|\nabla \Delta d\|_{\mathcal{H}^{m-1}}^2 d\tau
     +\delta_1 \!\! \int_0^t\!\!  \|\nabla^2 d\|_{\mathcal{H}^{m}}^2 d\tau
     +C_{\delta, \delta_1}C_{m+2}
     \!\!\int_0^t\!\!(\|\nabla d\|_{\mathcal{H}^{m}}^2
      +\|\Delta d\|_{\mathcal{H}^{m-1}}^2)d\tau.
\end{aligned}
\end{equation}
Substituting \eqref{3234}, \eqref{3237}, \eqref{3261}-\eqref{3263}
and \eqref{3259} into \eqref{3230}
and choosing $\delta_1$ small enough, one arrives at
\begin{equation}\label{3268}
\begin{aligned}
&\frac{1}{2}\int |\mathcal{Z}^\alpha \nabla d(t)|^2 dx
+\frac{3}{4}\int_0^t \int |\mathcal{Z}^\alpha \Delta d|^2dxd\tau\\
&\le \frac{1}{2}\int |\mathcal{Z}^\alpha \nabla d_0|^2 dx
  +\delta_2 \!\int_0^t \!\|\nabla^2 d\|_{\mathcal{H}^{m}}^2d\tau
  +\delta \int_0^t \|\nabla \Delta d\|_{\mathcal{H}^{m-1}}^2 d\tau\\
&\quad +C_{\delta, \delta_2}C_{m+2}(1+P(Q(t)))\int_0^t N_m(\tau)d\tau.
\end{aligned}
\end{equation}
In view of the standard elliptic regularity results with Neumann boundary condition,
we get that
\begin{equation}\label{3269}
\begin{aligned}
&\|\nabla^2 d\|_{\mathcal{H}^{m}}^2
=\|\nabla^2 \partial_t^{\alpha_0}d\|_{m-\alpha_0}^2\\
&\le C_{m+2}(\|\nabla \partial_t^{\alpha_0}d\|^2
+\|\Delta\partial_t^{\alpha_0}d\|_{m-\alpha_0}^2)\\
&\le C_{m+2}(\|\nabla d\|_{\mathcal{H}^{m}}^2+\|\Delta d\|_{\mathcal{H}^{m}}^2).
\end{aligned}
\end{equation}
Hence, the combination of \eqref{3227}, \eqref{3268} and \eqref{3269} yields directly
\begin{equation*}
\begin{aligned}
&\underset{0\le \tau \le t}{\sup}\|(u, \nabla d)\|_{\mathcal{H}^{m}}^2
+\varepsilon \int_0^t \|\nabla u\|_{\mathcal{H}^{m}}^2 d\tau
+\int_0^t \|\Delta d\|_{\mathcal{H}^{m}}^2 d\tau\\
&\le C_{m+2}\left\{\|(u_0, \nabla d_0)\|_{\mathcal{H}^{m}}^2
     +\delta \int_0^t \|\nabla \Delta d\|_{\mathcal{H}^{m-1}}^2d\tau
     +\delta \varepsilon^2 \int\|\nabla^2 u\|_{\mathcal{H}^{m-1}}^2 d\tau\right.\\
&\quad \quad    \left. +\int_0^t(\|\nabla^2 p_1\|_{\mathcal{H}^{m-1}}
\|u\|_{\mathcal{H}^{m}}
+\varepsilon^{-1}\|\nabla p_2\|_{\mathcal{H}^{m-1}}^2)d\tau
+C_{\delta}(1+P(Q(t)))\int_0^t N_m(\tau)d\tau\right\}.
\end{aligned}
\end{equation*}
Therefore, we complete the proof of lemma \ref{lemma3.2}.
\end{proof}

\subsection{Conormal Estimates for $\Delta d$}
\quad
In this subsection, we shall get some uniform estimates for $\Delta d$
in conormal Sobolev space.

\begin{lemm}\label{lemma3.3-1}
For a smooth solution to \eqref{eq1} and \eqref{bc2}, it holds that
for $\varepsilon \in (0, 1]$
\begin{equation}\label{331-1}
\underset{0\le \tau \le t}{\sup}\|\Delta d(\tau)\|^2
+\int_0^t\|\nabla \Delta d\|^2 d\tau
\le \|\Delta d_0\|^2
+C(1+Q(t)^2)\int_0^t N_1(\tau)d\tau.
\end{equation}
\end{lemm}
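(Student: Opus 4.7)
The strategy is to treat \eqref{eq1}$_2$ as a heat equation for $d$ and perform an $H^2$-type energy estimate. First, I would apply $\nabla$ to the equation $d_t+u\cdot\nabla d=\Delta d+|\nabla d|^2 d$ to obtain
\begin{equation*}
\nabla d_t-\nabla\Delta d=-\nabla(u\cdot\nabla d)+\nabla(|\nabla d|^2 d),
\end{equation*}
and then take the $L^2$ inner product of this identity against $-\nabla\Delta d$ over $\Omega$.

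The decisive step is the integration by parts
\begin{equation*}
-\int_\Omega\nabla d_t\cdot\nabla\Delta d\,dx=\int_\Omega\Delta d_t\cdot\Delta d\,dx-\int_{\partial\Omega}\partial_n d_t\cdot\Delta d\,d\sigma,
\end{equation*}
in which the boundary integral vanishes: since $n$ is $t$-independent, differentiating the Neumann condition $\partial_n d|_{\partial\Omega}=0$ in $t$ yields $\partial_n d_t|_{\partial\Omega}=0$. The remaining bulk integral equals $\tfrac12\tfrac{d}{dt}\|\Delta d\|^2$, so the test produces the differential identity
\begin{equation*}
\tfrac12\tfrac{d}{dt}\|\Delta d\|^2+\|\nabla\Delta d\|^2=\int_\Omega\nabla(u\cdot\nabla d)\cdot\nabla\Delta d\,dx-\int_\Omega\nabla(|\nabla d|^2 d)\cdot\nabla\Delta d\,dx.
\end{equation*}

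For the right-hand side I would apply Young's inequality to absorb $\tfrac12\|\nabla\Delta d\|^2$ into the left, reducing the task to bounding $\|\nabla(u\cdot\nabla d)\|^2$ and $\|\nabla(|\nabla d|^2 d)\|^2$. After Leibniz expansion, the first is dominated by $\|\nabla u\|_{L^\infty}^2\|\nabla d\|^2+\|u\|_{L^\infty}^2\|\nabla^2 d\|^2$, and the second, after invoking the pointwise identity $|d|=1$ from Lemma \ref{lemma3.1} to eliminate $\|d\|_{L^\infty}$, by $\|\nabla d\|_{L^\infty}^2\|\nabla^2 d\|^2+\|\nabla d\|_{L^\infty}^4\|\nabla d\|^2$. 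All $L^\infty$-type factors are swallowed by $Q(t)$ (see \eqref{def2}), and standard elliptic regularity with Neumann data gives $\|\nabla^2 d\|^2\le C(\|\Delta d\|^2+\|\nabla d\|^2)$, so the $L^2$-type factors are controlled by $N_1(\tau)$. This delivers the differential inequality
\begin{equation*}
\tfrac{d}{dt}\|\Delta d\|^2+\|\nabla\Delta d\|^2\le C\bigl(1+Q(t)^2\bigr)N_1(\tau),
\end{equation*}
which upon integration on $[0,t]$ yields the claim.

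The only delicate point is the vanishing of the boundary term in the first integration by parts, which is precisely where the Neumann condition on $d$ is essential; the rest of the argument is a straightforward combination of Leibniz expansion, Young's inequality, and elliptic regularity for the Neumann problem.
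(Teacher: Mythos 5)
Your proposal is correct and follows essentially the same route as the paper: apply $\nabla$ to \eqref{eq1}$_2$, test against $-\nabla\Delta d$, kill the boundary term via the time-differentiated Neumann condition, absorb $\|\nabla\Delta d\|^2$ by Young's inequality, and control the nonlinear terms through $Q(t)$ together with elliptic regularity for $\|\nabla^2 d\|$. Your version is if anything slightly more explicit than the paper's in spelling out the reduction of $\|\nabla^2 d\|^2$ to $N_1(\tau)$ via the Neumann elliptic estimate.
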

\begin{proof}
Taking $\nabla$ operator to the equation \eqref{eq1}$_2$, one arrives at
\begin{equation}\label{338}
\nabla d_t-\nabla \Delta d=-\nabla(u\cdot \nabla d)+\nabla(|\nabla d|^2 d).
\end{equation}
Multiplying \eqref{338} by $-\nabla \Delta d$ and integrating over $\Omega$, we find
\begin{equation}\label{339}
\begin{aligned}
&-\int \nabla d_t\cdot \nabla \Delta d ~dx +\int |\nabla \Delta d|^2 dx\\
&=\int\nabla(u\cdot \nabla d)\cdot \nabla \Delta d~dx
-\int \nabla(|\nabla d|^2 d)\cdot \nabla \Delta d~dx.
\end{aligned}
\end{equation}
By integrating by parts and applying the Neumann boundary condition \eqref{bc2}, we get
\begin{equation}\label{3310}
\begin{aligned}
-\int \nabla d_t \cdot \nabla \Delta d ~dx
=-\int_{\partial \Omega}n\cdot \nabla d_t \cdot \Delta d~d\sigma
 +\int \Delta d_t \cdot \Delta d ~dx
=\frac{1}{2}\frac{d}{dt}\int |\Delta d|^2 dx.
\end{aligned}
\end{equation}
In view of the Cauchy inequality, we obtain
\begin{equation}\label{3311}
\begin{aligned}
&\int\nabla(u\cdot \nabla d)\cdot \nabla \Delta d~dx
\le \delta \|\nabla \Delta d\|^2
+ C_\delta \|u\|_{W^{1,\infty}}^2(\|\nabla d\|^2+\|\nabla^2 d\|^2),\\
&-\int \nabla(|\nabla d|^2 d)\cdot \nabla \Delta d~dx
\le \delta \|\nabla \Delta d\|^2
+\|\nabla d\|_{L^\infty}^4 \|\nabla d\|^2
+\|\nabla d\|^2 \|\nabla^2 d\|^2.
\end{aligned}
\end{equation}
Substituting \eqref{3311} into \eqref{3310}, choosing $\delta$ small enough
and integrating over $[0, t]$, one attains
\begin{equation*}
\begin{aligned}
&\frac{1}{2}\int |\Delta d|^2(t) dx
+\frac{3}{4}\int |\nabla \Delta d|^2 dx\\
&\le \int |\Delta d_0|^2 dx
+C(\|u\|_{W^{1,\infty}}^2+\|\nabla d\|_{L^\infty}^4)
\int_0^t(\|\nabla d\|^2+\|\nabla^2 d\|^2)d\tau.
\end{aligned}
\end{equation*}
Therefore, we complete the proof of lemma \ref{lemma3.3-1}.
\end{proof}

Next, we can establish the following conormal estimates for the quantity $\Delta d$.

\begin{lemm}\label{lemma3.4-1}
For $m \ge  1$ and a smooth solution to \eqref{eq1} and \eqref{bc2}, it holds that
for $\varepsilon \in (0, 1]$
\begin{equation}\label{341-1}
\begin{aligned}
&\underset{0\le \tau \le t}{\sup}
\|\Delta d(\tau)\|_{\mathcal{H}^{m-1}}^2
+\int_0^t \|\nabla \Delta d\|_{\mathcal{H}^{m-1}}^2 d\tau\\
&\le\!\! C_{m+2}\left\{
      \|\Delta d_0\|_{\mathcal{H}^{m-1}}^2
      +\delta \!\!\int_0^t \!\!\| \Delta d\|_{\mathcal{H}^{m}}^2 d\tau
      +C_\delta\left(1+P(Q(t))\right)\int_0^t N_m(t)d\tau\right\}.
\end{aligned}
\end{equation}
\end{lemm}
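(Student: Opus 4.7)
The plan is to follow the scheme of Lemma \ref{lemma3.3-1}, which is precisely the $m=1$ instance of \eqref{341-1}, and upgrade it by applying an additional conormal derivative $\mathcal{Z}^\alpha$ with $|\alpha|\le m-1$ on top. For each such $\alpha$, I apply $\mathcal{Z}^\alpha$ to $\nabla$ of the $d$-equation $\eqref{eq1}_2$ to obtain
\begin{equation*}
\mathcal{Z}^\alpha \nabla d_t - \mathcal{Z}^\alpha \nabla \Delta d
= -\mathcal{Z}^\alpha \nabla(u\cdot\nabla d) + \mathcal{Z}^\alpha \nabla(|\nabla d|^2 d),
\end{equation*}
take the $L^2(\Omega)$ inner product with $-\mathcal{Z}^\alpha\nabla\Delta d$, integrate in $t$, and sum over $|\alpha|\le m-1$.

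The crucial step is to produce the energy identity
\begin{equation*}
-\int\mathcal{Z}^\alpha \nabla d_t\cdot\mathcal{Z}^\alpha\nabla\Delta d\,dx
= \tfrac{1}{2}\tfrac{d}{dt}\|\mathcal{Z}^\alpha\Delta d\|^2
+ R^\alpha_{\mathrm{com}} + R^\alpha_{\partial\Omega},
\end{equation*}
where $R^\alpha_{\mathrm{com}}$ collects commutators of $\mathcal{Z}^\alpha$ with $\nabla$ and $\Delta$, and $R^\alpha_{\partial\Omega}$ is the resulting boundary integral. This is obtained by swapping $\nabla$ past $\mathcal{Z}^\alpha$ on the second factor, performing one integration by parts, and using $\mathcal{Z}^\alpha \Delta d_t=\partial_t\mathcal{Z}^\alpha \Delta d$; the Neumann condition $\partial_n d|_{\partial\Omega}=0$, exactly as in Lemma \ref{lemma3.3-1}, is what makes the leading piece of $R^\alpha_{\partial\Omega}$ vanish. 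The residual commutators in $R^\alpha_{\mathrm{com}}$ have conormal order at most $m$ applied to $\Delta d$ (equivalently $m+1$ applied to $d$), so they will be absorbed either by $N_m(t)$ or by the slack term $\delta\int_0^t\|\Delta d\|_{\mathcal{H}^m}^2\,d\tau$ standing on the right of \eqref{341-1}, which is ultimately closed against the LHS of Lemma \ref{lemma3.2}.

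The boundary integral $R^\alpha_{\partial\Omega}$ splits into two cases. When $\alpha_{13}\ge 1$, it vanishes identically because $Z_3=\varphi(z)\partial_z$ carries $\varphi(0)=0$. When $\alpha_{13}=0$, $\mathcal{Z}^\alpha$ is composed only of $\partial_t$ and the tangential fields $Z_1,Z_2$, and $\partial_n\mathcal{Z}^\alpha d_t|_{\partial\Omega}$ reduces to a commutator $[\partial_n,\mathcal{Z}^\alpha]d_t$ of strictly lower order; the resulting surface integral is controlled via the trace estimate of Proposition \ref{prop2.3} and absorbed into the $\delta$- and $N_m$-terms. The two nonlinear right-hand sides are treated by Cauchy--Schwarz, peeling off $\tfrac{1}{4}\|\mathcal{Z}^\alpha\nabla\Delta d\|^2$ for absorption into the LHS dissipation, while the Moser-type inequality of Proposition \ref{prop2.2} bounds $\|\mathcal{Z}^\alpha\nabla(u\cdot\nabla d)\|^2$ and $\|\mathcal{Z}^\alpha\nabla(|\nabla d|^2 d)\|^2$ by $(1+P(Q(t)))N_m(t)$; for the cubic nonlinearity, the previously established bound \eqref{3246} on $\int_0^t\|d_t\|_{\mathcal{H}^{m-1}}^2\,d\tau$ is invoked to handle the pieces where conormal derivatives fall on $d$ itself.

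The main obstacle I anticipate is the careful bookkeeping of the top-order commutator terms in $R^\alpha_{\mathrm{com}}$: their conormal order narrowly exceeds what $N_m$ controls, so only the combination with the diffusive LHS of Lemma \ref{lemma3.2} (via the $\delta$ slack) closes the estimate. Similarly, the boundary terms and the cubic nonlinearity demand that the top-order contributions be routed into the $\delta$-term rather than the $N_m$-term. Once this routing is done, selecting the Cauchy parameters sufficiently small, summing over $|\alpha|\le m-1$, and integrating over $[0,t]$ yields \eqref{341-1}.
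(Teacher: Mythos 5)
Your overall strategy coincides with the paper's: apply $\mathcal{Z}^\alpha$ ($|\alpha|\le m-1$) to the gradient of $\eqref{eq1}_2$, test against $-\nabla\mathcal{Z}^\alpha\Delta d$, extract $\tfrac12\tfrac{d}{dt}\|\mathcal{Z}^\alpha\Delta d\|^2$ plus the dissipation $\|\nabla\mathcal{Z}^\alpha\Delta d\|^2$, kill the leading boundary contribution with the Neumann condition and estimate the remainder by the trace inequality, and treat the convection and cubic terms with Proposition \ref{prop2.2} together with the bound \eqref{3246} on $\int_0^t\|d_t\|_{\mathcal{H}^{m-1}}^2\,d\tau$. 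All of that matches the paper's computation ($II_2$, $II_5$, $II_6$).

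There is, however, one concrete gap in how you propose to close the estimate. The commutator coming from exchanging $\mathcal{Z}^\alpha$ with the gradient in the dissipative term (the paper's $II_4=-\int_0^t\int[\mathcal{Z}^\alpha,\nabla]\Delta d\cdot\nabla\mathcal{Z}^\alpha\Delta d$) is controlled, after Cauchy--Schwarz, by $\delta_1\int_0^t\|\nabla\mathcal{Z}^\alpha\Delta d\|^2\,d\tau+C_{\delta_1}\int_0^t\|\nabla\Delta d\|_{\mathcal{H}^{m-2}}^2\,d\tau$. The second quantity contains a genuine \emph{normal} derivative of $\Delta d$ at conormal order $m-2$: it is not bounded by $N_m(t)$ (which only contains $\|\Delta d\|_{\mathcal{H}^{m-1}}$, a purely conormal norm) nor by the slack term $\delta\int_0^t\|\Delta d\|_{\mathcal{H}^m}^2\,d\tau$, so your proposed routing of "top-order commutators" into those two receptacles fails for precisely this term. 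Moreover, because it carries the large constant $C_{\delta_1}$, it cannot be absorbed into the summed left-hand side $\int_0^t\|\nabla\Delta d\|_{\mathcal{H}^{m-1}}^2\,d\tau$ either. The paper resolves this by running an induction on $m$: the induction hypothesis (the lemma at level $m-1$) already bounds $\int_0^t\|\nabla\Delta d\|_{\mathcal{H}^{m-2}}^2\,d\tau$ by admissible quantities, and this is the step your proposal is missing. (An alternative fix would be to re-express $\nabla\Delta d=\nabla(d_t+u\cdot\nabla d-|\nabla d|^2 d)$ through the equation and estimate the result in $\mathcal{H}^{m-2}$, but you would need to say so explicitly; a direct summation over $|\alpha|\le m-1$ does not close.)
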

\begin{proof}
The case for $m=1$ is already proved in Lemma \ref{lemma3.3-1}. Assume that
\eqref{341-1} is proved for $k=m-2$. We shall prove that is holds for $k=m-1 \ge 1$.
For $|\alpha|=m-1$,
multiplying \eqref{338} by $-\nabla \mathcal{Z}^\alpha \Delta d$, we find
\begin{equation}\label{3412}
\begin{aligned}
&-\int \mathcal{Z}^\alpha \nabla d_t \cdot \nabla \mathcal{Z}^\alpha \Delta d ~dx
+\int \mathcal{Z}^\alpha \nabla \Delta d\cdot \nabla \mathcal{Z}^\alpha \Delta d ~dx\\
&=\int \mathcal{Z}^\alpha \nabla(u\cdot \nabla d)\cdot
  \nabla \mathcal{Z}^\alpha \Delta d ~dx
  +\int \mathcal{Z}^\alpha \nabla(|\nabla d|^2 d)\cdot
  \nabla \mathcal{Z}^\alpha \Delta d ~dx.
\end{aligned}
\end{equation}
Integrating by part, it is easy to deduce that
\begin{equation}\label{3413}
\begin{aligned}
&-\int \mathcal{Z}^\alpha \nabla d_t \cdot \nabla \mathcal{Z}^\alpha \Delta d ~dx\\
&=-\int_{\partial \Omega}n\cdot\mathcal{Z}^\alpha \nabla d_t\cdot \mathcal{Z}^\alpha \Delta d~d\sigma
+\int {\nabla \cdot}(\mathcal{Z}^\alpha \nabla d_t)\cdot \mathcal{Z}^\alpha \Delta d~dx\\
&=-\int_{\partial \Omega}n\cdot\mathcal{Z}^\alpha \nabla d_t\cdot \mathcal{Z}^\alpha \Delta d~d\sigma
+\frac{1}{2}\frac{d}{dt}\int |\mathcal{Z}^\alpha \Delta d|^2dx
-\int [\mathcal{Z}^\alpha, {\nabla \cdot}] \nabla d_t\cdot \mathcal{Z}^\alpha \Delta d~dx.
\end{aligned}
\end{equation}
It is easy to deduce that
\begin{equation}\label{3414}
\int \mathcal{Z}^\alpha \nabla \Delta d\cdot \nabla \mathcal{Z}^\alpha \Delta d ~dx
=\int |\nabla \mathcal{Z}^\alpha \Delta d|^2dx
+\int [\mathcal{Z}^\alpha, \nabla] \Delta d\cdot \nabla \mathcal{Z}^\alpha \Delta d ~dx.
\end{equation}
Substituting \eqref{3413} and \eqref{3414} into \eqref{3412} and
integrating over $[0, t]$, one attains that
\begin{equation}\label{3415}
\begin{aligned}
&\frac{1}{2}\int |\mathcal{Z}^\alpha \Delta d(t)|^2dx
+\int_0^t \int |\nabla \mathcal{Z}^\alpha \Delta d|^2dx d\tau\\
&=
\frac{1}{2}\int |\mathcal{Z}^\alpha \Delta d_0|^2dx
+\int_0^t \int_{\partial \Omega}n\cdot\mathcal{Z}^\alpha \nabla d_t\cdot \mathcal{Z}^\alpha \Delta d~d\sigma d\tau
+\int_0^t \int [\mathcal{Z}^\alpha, {\nabla \cdot}] \nabla d_t\cdot \mathcal{Z}^\alpha \Delta d~dxd\tau\\
&\quad -\int_0^t \int [\mathcal{Z}^\alpha, \nabla] \Delta d\cdot \nabla \mathcal{Z}^\alpha \Delta d ~dxd\tau
+\int_0^t\int \mathcal{Z}^\alpha \nabla(u\cdot \nabla d)\cdot
  \nabla \mathcal{Z}^\alpha \Delta d ~dxd\tau\\
&\quad +\int_0^t \int \mathcal{Z}^\alpha \nabla(|\nabla d|^2 d)\cdot
  \nabla \mathcal{Z}^\alpha \Delta d ~dxd\tau\\
&:=II_1+II_2+II_3+II_4+II_5+II_6.
\end{aligned}
\end{equation}
To deal with the boundary term on the right hand side of \eqref{3415}.
If $|\alpha_0|=m-1$, then we have
\begin{equation}\label{3416-1}
\int_0^t \int_{\partial \Omega}n\cdot\mathcal{Z}^\alpha \nabla d_t\cdot \mathcal{Z}^\alpha \Delta d~d\sigma d\tau=0.
\end{equation}
On the other hand, it is easy to deduce that for $|\alpha_0|\le m-2$
\begin{equation}\label{3416}
\int_0^t \int_{\partial \Omega}n\cdot\mathcal{Z}^\alpha \nabla d_t\cdot \mathcal{Z}^\alpha \Delta d~d\sigma d\tau
\le \int_0^t|n\cdot\mathcal{Z}^\alpha \nabla d_t|_{L^2(\partial \Omega)}
    |\mathcal{Z}^{{\alpha}}\Delta d|_{L^2(\partial \Omega)}d\tau,
\end{equation}
The application of trace inequality in Proposition \ref{prop2.3}
and the boundary condition \eqref{bc2} implies
\begin{equation}\label{3417}
\begin{aligned}
&|\mathcal{Z}^{{{\alpha}}}\Delta d|_{L^2(\partial \Omega)}
=|\partial_t^{\alpha_0} \Delta d|_{H^{m-1-|\alpha_0|}(\partial \Omega)}\\
&\le C\|\nabla \partial_t^{\alpha_0} \Delta d\|_{m-1-|\alpha_0|}
     +C\|\partial_t^{\alpha_0} \Delta d\|_{m-|\alpha_0|}\\
&\le C\|\nabla \Delta d\|_{\mathcal{H}^{m-1}}
     + C\|\Delta d\|_{\mathcal{H}^{m}},
\end{aligned}
\end{equation}
and
\begin{equation}\label{3418}
\begin{aligned}
&|n\cdot\mathcal{Z}^\alpha \nabla d_t|_{L^2(\partial \Omega)}
\le C_{m}|\partial_t^{\alpha_0}\nabla d_t|_{H^{m-2-|\alpha_0|}(\partial \Omega)}\\
&\le C_{m}\|\partial_t^{\alpha_0}\nabla^2 d_t\|_{m-2-|\alpha_0|}
     +C_{m}\|\partial_t^{\alpha_0}\nabla d_t\|_{m-1-|\alpha_0|}\\
&\le C_{m}\|\nabla^2 d\|_{\mathcal{H}^{m-1}}
     +C_{m}\|\nabla d\|_{\mathcal{H}^{m}}.
\end{aligned}
\end{equation}
Substituting \eqref{3417} and \eqref{3418} into \eqref{3416} and applying
the Cauchy inequality, one attains
\begin{equation}\label{3419}
\begin{aligned}
II_2
&\le \delta_1 \int_0^t \|\nabla \Delta d\|_{\mathcal{H}^{m-1}}^2 d\tau
     +\delta \int_0^t \| \Delta d\|_{\mathcal{H}^{m}}^2 d\tau\\
&\quad +C_m\left\{C_{\delta,\delta_1} \int_0^t \|\nabla^2 d\|_{\mathcal{H}^{m-1}}^2d\tau
     +C_{\delta,\delta_1} \int_0^t\|\nabla d\|_{\mathcal{H}^{m}}^2d\tau\right\}.
\end{aligned}
\end{equation}
By virtue of the Cauchy inequality, one arrives at
\begin{equation}\label{3420}
\begin{aligned}
&II_3
\le C\int_0^t \|\Delta d_t\|_{\mathcal{H}^{m-2}}\|\Delta d\|_{\mathcal{H}^{m-1}}d\tau
\le C\int_0^t \|\Delta d\|_{\mathcal{H}^{m-1}}^2d\tau,\\
&II_4
\le \delta_1 \int_0^t\|\nabla \mathcal{Z}^\alpha \Delta d\|^2 d\tau
+C_{\delta_1} \int_0^t \|\nabla \Delta d\|_{\mathcal{H}^{m-2}}^2 d\tau.
\end{aligned}
\end{equation}
The application of Proposition \ref{prop2.2} yields directly
\begin{equation}\label{3421}
\begin{aligned}
II_5
&=\int_0^t\int \mathcal{Z}^\alpha (\nabla u\cdot \nabla d)\cdot
  \nabla \mathcal{Z}^\alpha \Delta d ~dxd\tau
  +\int_0^t\int \mathcal{Z}^\alpha (u\cdot \nabla^2 d)\cdot
  \nabla \mathcal{Z}^\alpha \Delta d ~dxd\tau\\
&\le \delta_1 \int_0^t\|\nabla \mathcal{Z}^\alpha \Delta d\|^2 d\tau
     +C_\delta\|\nabla u\|_{L^\infty_{x,t}}^2
     \int_0^t\|\nabla d\|_{\mathcal{H}^{m-1}}^2 d\tau
     +C_\delta\|\nabla d\|_{L^\infty_{x,t}}^2
     \int_0^t\|\nabla u\|_{\mathcal{H}^{m-1}}^2 d\tau\\
&\quad  +C_\delta\|u\|_{L^\infty_{x,t}}^2
     \int_0^t\|\nabla^2 d\|_{\mathcal{H}^{m-1}}^2 d\tau
     +C_\delta\|\nabla^2 d\|_{L^\infty_{x,t}}^2
     \int_0^t\|u\|_{\mathcal{H}^{m-1}}^2 d\tau\\
&\le \delta_1 \int_0^t\|\nabla \mathcal{Z}^\alpha \Delta d\|^2 d\tau
     +C_{\delta_1}(1+P(Q(t)))\int_0^t N_m(\tau)d\tau.
\end{aligned}
\end{equation}
It is easy to deduce that
\begin{equation}\label{3422}
\begin{aligned}
II_6
&=\sum_{|\beta|\ge 1}\int_0^t \int
   \mathcal{Z}^\gamma (\nabla d\cdot \nabla^2 d)\cdot\mathcal{Z}^\beta d
    \cdot \nabla \mathcal{Z}^\alpha \Delta d ~dxd\tau\\
&\quad  +\int_0^t \int\mathcal{Z}^\alpha (\nabla d\cdot \nabla^2 d)\cdot d \cdot
     \nabla \mathcal{Z}^\alpha \Delta d ~dxd\tau\\
&\quad +\sum_{|\beta|+|\gamma|=m-1}
     \int_0^t\int \mathcal{Z}^\gamma (|\nabla d|^2)\mathcal{Z}^\beta \nabla d
         \cdot \nabla \mathcal{Z}^\alpha \Delta d~dxd\tau\\
&=II_{61}+II_{62}+II_{63}.
\end{aligned}
\end{equation}
By virtue of the Proposition \ref{prop2.2} and Cauchy inequality, one arrives at
\begin{equation}\label{3423}
\begin{aligned}
II_{61}
&\le \delta \int_0^t \|\nabla \mathcal{Z}^\alpha \Delta d\|^2 d\tau
     +C_\delta\|\mathcal{Z}d\|_{L^{\infty}_{x,t}}^2
     \int_0^t \|\nabla d\cdot \nabla^2 d\|_{\mathcal{H}^{m-2}}^2 d\tau\\
&\quad +C_\delta\|\nabla d\cdot \nabla^2 d \|_{L^{\infty}_{x,t}}^2
     \int_0^t \|\mathcal{Z}d\|_{\mathcal{H}^{m-2}}^2 d\tau\\
&\le \delta_1 \int_0^t \|\nabla \mathcal{Z}^\alpha \Delta d\|^2 d\tau
     +C_1C_{\delta_1}(1+P(Q(t)))\int_0^t N_m(\tau)d\tau.
\end{aligned}
\end{equation}
Similarly, it is easy to deduce that
\begin{equation}\label{3424}
\begin{aligned}
&II_{62}\le \delta \int_0^t \|\nabla \mathcal{Z}^\alpha \Delta d\|^2 d\tau
     +C_\delta \|\nabla d\|_{W^{1,\infty}_{x,t}}^2
     \int_0^t (\|\nabla^2 d\|_{\mathcal{H}^{m-1}}^2+\|\nabla d\|_{\mathcal{H}^{m-1}}^2) d\tau,\\
&II_{63}\le \delta \int_0^t \|\nabla \mathcal{Z}^\alpha \Delta d\|^2 d\tau
     +C_\delta \|\nabla d\|_{L^{\infty}_{x,t}}^4
     \int_0^t (\|\nabla^2 d\|_{\mathcal{H}^{m-1}}^2+\|\nabla d\|_{\mathcal{H}^{m-1}}^2) d\tau.
\end{aligned}
\end{equation}
Substituting \eqref{3423} and \eqref{3424} into \eqref{3422}, we obtain
\begin{equation}\label{3425}
II_6
\le \delta \int_0^t \|\nabla \mathcal{Z}^\alpha \Delta d\|^2 d\tau
  +C_1C_\delta(1+P(Q(t)))\int_0^t N_m(\tau)d\tau.
\end{equation}
Substituting \eqref{3419}-\eqref{3421} and \eqref{3425} into \eqref{3415}
and choosing $\delta_1$ small enough, we find
\begin{equation*}
\begin{aligned}
&\frac{1}{2}\int |\mathcal{Z}^\alpha \Delta d(t)|^2dx
+\int_0^t \int |\nabla \mathcal{Z}^\alpha \Delta d|^2dx d\tau\\
&\le C_m\left\{\frac{1}{2}\int |\mathcal{Z}^\alpha \Delta d_0|^2dx
     +\delta \int_0^t \| \Delta d\|_{\mathcal{H}^{m}}^2 d\tau
     +C\int_0^t \|\nabla \Delta d\|_{\mathcal{H}^{m-2}}^2 d\tau\right\}\\
&\quad   +C_m C_\delta [1+Q(t)^2]\int_0^t N_m(\tau)d\tau.
\end{aligned}
\end{equation*}
By the induction assumption, one can eliminate the term
$\int_0^t \|\nabla \Delta d\|_{\mathcal{H}^{m-2}}^2d\tau$.
Therefore, we complete the proof of the Lemma \ref{lemma3.4-1}.
\end{proof}

\subsection{Normal Derivatives Estimates}

\quad We shall now provide an estimate for $\|\nabla u\|_{\mathcal{H}^{m-1}}$.
Of course, the only difficulty is to estimate $\|\chi \partial_n u\|_{\mathcal{H}^{m-1}}$,
where $\chi$ is compactly supported in one of the $\Omega_i$ and with value
one in a vicinity of the boundary.
Indeed, we have by definition of the norm that
$\|\chi \partial_{y_i} u\|_{\mathcal{H}^{m-1}}\le C \|u\|_{\mathcal{H}^{m}}, ~i=1,2$.
We shall thus use the local coordinates.

At first, thanks to the divergence free condition, which reads
\begin{equation}\label{301}
{\rm div}u=\partial_n u \cdot n+(\Pi \partial_{y_1}u)^1
+(\Pi \partial_{y_2}u)^2=0,
\end{equation}
to get that
\begin{equation}\label{302}
\|\chi \partial_n u \cdot n\|_{\mathcal{H}^{m-1}}\le C_m \|u\|_{\mathcal{H}^{m}}.
\end{equation}
Then, it remains to estimate $\|\chi \Pi(\partial_n u)\|_{\mathcal{H}^{m-1}}$.
We extend the smooth symmetric matrix $A$ in \eqref{bc1} to be
$
A(y, z)=A(y).
$
Let us set
\begin{equation}\label{303}
\eta \triangleq \chi\{w\times n-\Pi(Bu)\}.
\end{equation}
In view of the Navier-slip type boundary condition \eqref{bc2}, we obviously have that
$\eta$ satisfies a homogeneous Dirichlet boundary condition
on the boundary
\begin{equation}\label{304}
\left.\eta\right|_{\partial \Omega}=0.
\end{equation}
By virtue of $w\times n=(\nabla u-(\nabla u)^t)\cdot n$, then
$\eta$ can be written as
\begin{equation}\label{305}
\eta=\chi\Pi\{\partial_n u-\nabla(u\cdot n)+(\nabla n)^t u+Bu\},
\end{equation}
which yields
\begin{equation}\label{306}
\|\chi \Pi (\partial_n u)\|_{\mathcal{H}^{m-1}}
\le C_{m+1}(\|\eta\|_{\mathcal{H}^{m-1}}+\|u\|_{\mathcal{H}^{m}}).
\end{equation}
Then, we find
\begin{equation}\label{307}
\|\nabla u\|_{\mathcal{H}^{m-1}}
\le C_{m+1}(\|\eta\|_{\mathcal{H}^{m-1}}+\|u\|_{\mathcal{H}^{m}}).
\end{equation}
On the other hand, it is easy to deduce that
\begin{equation}\label{308}
\|\eta\|_{\mathcal{H}^{m-1}} \le
C_{m+1}(\|\nabla u\|_{\mathcal{H}^{m-1}}+\|u\|_{\mathcal{H}^{m}}).
\end{equation}

Now we shall establish the estimates for the equivalent quantity $\eta$.
\begin{lemm}\label{lemma3.3}
For a smooth solution to \eqref{eq1} and \eqref{bc2}, it holds that
for $\varepsilon \in (0, 1]$
\begin{equation}\label{331}
\begin{aligned}
&\underset{0\le \tau \le t}{\sup} \|\eta(\tau)\|^2
+\varepsilon \int_0^t \|\nabla \eta\|^2 d\tau\\
&\le \int |\eta_0|^2 dx
+\int_0^t\|\nabla p\|\|\eta\|d\tau
+\delta \varepsilon^2 \int_0^t\|\nabla^2 u\|^2d\tau
+\delta \int_0^t\|\nabla \Delta d\|^2d\tau\\
&\quad +C_3 C_\delta(1+Q(t))
\int_0^t N_1(\tau)d\tau.
\end{aligned}
\end{equation}
\end{lemm}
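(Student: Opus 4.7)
The plan is to derive a convection--diffusion equation for $\eta$ with homogeneous Dirichlet boundary data and then perform a standard $L^2$ energy estimate. The key structural point is that although the normal derivative $\chi(\partial_n u)_\tau$ does not vanish on $\partial\Omega$, the Navier--slip companion $\eta=\chi\{w\times n-\Pi(Bu)\}$ does (by \eqref{304}), which allows us to integrate $-\varepsilon\Delta\eta$ by parts with no boundary contribution.

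First, I would apply $\nabla\times$ to \eqref{eq1-1} and use $\mathrm{div}\,u=0$ to obtain the vorticity equation
\begin{equation*}
w_t+u\cdot\nabla w-w\cdot\nabla u=\varepsilon\Delta w-\nabla\times(\nabla d\cdot\Delta d).
\end{equation*}
Taking the cross product with $n$, multiplying by $\chi$, and combining with the time-differentiated identity $-\chi\Pi(Bu)_t=-\chi\Pi(B u_t)$ (where $u_t$ is rewritten via \eqref{eq1-1}), I would obtain an equation of the form
\begin{equation*}
\eta_t+u\cdot\nabla\eta-\varepsilon\Delta\eta=F_p+F_\varepsilon+F_d+F_{\text{l.o.t.}},
\end{equation*}
where $F_p:=-\chi\Pi(B\nabla p)$ is the pressure contribution, $F_\varepsilon$ gathers the viscous commutators arising from $\varepsilon\chi(\Delta w)\times n-\varepsilon\Delta(\chi w\times n)$ together with $\varepsilon\chi\Pi(B\Delta u)$, $F_d$ collects $-\chi[\nabla\times(\nabla d\cdot\Delta d)]\times n-\chi\Pi(B\,\nabla d\cdot\Delta d)$, and $F_{\text{l.o.t.}}$ contains the remaining convection/stretching pieces $-\chi(u\cdot\nabla w)\times n+\chi(w\cdot\nabla u)\times n+\chi\Pi(B\,u\cdot\nabla u)$ as well as commutators produced by moving $\chi$, $n$ and $\Pi$ through derivatives.

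Next, I take the $L^2$ inner product with $\eta$. The convection term vanishes thanks to $\mathrm{div}\,u=0$ and $u\cdot n|_{\partial\Omega}=0$, and the diffusion term integrates by parts cleanly because $\eta|_{\partial\Omega}=0$, yielding exactly $\varepsilon\|\nabla\eta\|^2$. The pressure contribution is estimated crudely by Cauchy--Schwarz as $\|\nabla p\|\|\eta\|$, producing the $\int_0^t\|\nabla p\|\|\eta\|\,d\tau$ term in \eqref{331}. For $F_\varepsilon$, each commutator has at worst the form $\varepsilon(\nabla\chi,\nabla n,\ldots)\cdot\nabla^2 u$, so Young's inequality gives $\delta\varepsilon^2\|\nabla^2 u\|^2+C_\delta\|\eta\|^2$. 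For $F_d$, expanding $\nabla\times(\nabla d\cdot\Delta d)$ produces terms of type $\nabla^2 d\cdot\Delta d$ and $\nabla d\cdot\nabla\Delta d$; the second one is the critical one and is handled by $\|\nabla d\|_{L^\infty}\|\nabla\Delta d\|\|\eta\|\leq \delta\|\nabla\Delta d\|^2+C_\delta Q(t)\|\eta\|^2$, while the first is absorbed into $(1+Q(t))N_1$ using $\|\nabla^2 d\|_{L^\infty}\leq\sqrt{Q(t)}$. Finally, $F_{\text{l.o.t.}}$ gives products of the form $\|u\|_{L^\infty}\|\nabla w\|\|\eta\|$ and $\|w\|_{L^\infty}\|\nabla u\|\|\eta\|$, both of which fit into $C(1+Q(t))N_1$ after using \eqref{307}--\eqref{308} to trade $\|\nabla u\|$ for $\|\eta\|+\|u\|_{\mathcal{H}^1}$.

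The main obstacle is the viscous commutator $\varepsilon\chi(\Delta w)\times n-\varepsilon\Delta\eta$. Since neither $\chi$ nor $n$ is constant and $\Pi$, $B$ must be moved across $\Delta$, this generates many first- and second-order derivatives of $u$ with $C_{m+1}$-dependent coefficients. The point is that every such commutator is either of the form (coefficient)$\cdot\varepsilon\nabla^2 u$, which after Young produces $\delta\varepsilon^2\|\nabla^2 u\|^2$ and a harmless $\|\eta\|^2$ (not re-introducing the uncontrolled normal derivative we are trying to bound), or it hits $\nabla\eta$ and yields $\varepsilon\|\nabla\eta\|\cdot(\text{lower})$, which is absorbed into the principal dissipation. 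Once this bookkeeping is carried out, integration in time and application of Gr\"{o}nwall (or simple absorption for small $\delta$) yield \eqref{331}.
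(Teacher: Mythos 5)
Your overall strategy coincides with the paper's: derive the vorticity equation from \eqref{eq1-1}, form a convection--diffusion equation for $\eta=\chi\{w\times n-\Pi(Bu)\}$, use the homogeneous Dirichlet condition \eqref{304} so that $-\varepsilon\Delta\eta$ integrates by parts without boundary terms, leave the pressure as $\|\nabla p\|\,\|\eta\|$, put the $\varepsilon$-weighted second-derivative commutators into $\delta\varepsilon^{2}\|\nabla^{2}u\|^{2}$, and put the critical director term $\nabla d\cdot\nabla\Delta d$ into $\delta\|\nabla\Delta d\|^{2}+C_{\delta}Q(t)\|\eta\|^{2}$. This is exactly the $F_{1},\dots,F_{4}$ decomposition of \eqref{333} and the estimates \eqref{335}--\eqref{336-1}.

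There is, however, one concrete error in your treatment of the convection term. You list $-\chi(u\cdot\nabla w)\times n$ among the sources in $F_{\mathrm{l.o.t.}}$ and assert that the resulting product $\|u\|_{L^{\infty}}\|\nabla w\|\,\|\eta\|$ ``fits into $C(1+Q(t))N_{1}$.'' It does not: $\|\nabla w\|\sim\|\nabla^{2}u\|$ carries no factor of $\varepsilon$ at that point and is controlled neither by $N_{1}(\tau)$ nor by $Q(t)$, so if this term genuinely appeared as a source the estimate would not close. The correct accounting --- and the one the paper performs --- is that $\chi(u\cdot\nabla w)\times n$ is not a source at all: it is absorbed into the transport term $u\cdot\nabla\eta$ on the left-hand side, leaving only the commutators $(u\cdot\nabla\chi)(w\times n)$, $\chi\,w\times(u\cdot\nabla n)$ and $u\cdot\nabla(\Pi B)\,u$, which involve $w$ and $u$ but never $\nabla w$, and which are bounded by $C_{3}(1+\|u\|_{L^{\infty}})(\|u\|_{\mathcal{H}^{1}}+\|\nabla u\|)\|\eta\|$ exactly as in the paper's $F_{3}$ and $F_{4}$, then traded for $N_{1}$ via \eqref{307}--\eqref{308}. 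Since your own equation already carries $u\cdot\nabla\eta$ on the left, listing this term again on the right is an internal inconsistency; once it is removed, the rest of your argument goes through and reproduces \eqref{331}.
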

\begin{proof}
For $w\triangleq \nabla \times u$, notice that
\begin{equation*}
\nabla \times((u\cdot \nabla)u)
=(u\cdot \nabla)w-(w\cdot \nabla)u+w{\rm div}u,
\end{equation*}
which, together with \eqref{eq1-1}, reads
\begin{equation}\label{332}
w_t+(u\cdot \nabla)w-\varepsilon \Delta w=
w\cdot \nabla u-\nabla \times(\nabla d\cdot \Delta d)
\end{equation}
Consequently, the system for $\eta$ is
\begin{equation}\label{333}
\eta_t+u\cdot \nabla \eta-\varepsilon \Delta \eta
=\chi[F_1 \times n+\Pi(BF_2)]+\chi F_3+F_4+\varepsilon \Delta(\Pi B)\cdot u,
\end{equation}
where
\begin{equation*}
\begin{aligned}
F_1&=w\cdot \nabla u-\nabla \times (\nabla d\cdot \Delta d),\\
F_2&=-\nabla p-\nabla d\cdot \Delta d,\\
F_3&=-2\varepsilon \sum_{i=1}^3\partial_i w \times \partial_i n
      -\varepsilon w\times \Delta n
     +\sum_{i=1}^3 u_i w\times \partial_i n\\
     &\quad +u\cdot \nabla(\Pi B)u
     -2\varepsilon \sum_{i=1}^3 \partial_i(\Pi B)\partial_i u,\\
F_4&=(u\cdot \nabla \chi)[w\times n+\Pi(Bu)]
      -\varepsilon \Delta \chi[w\times n+\Pi(Bu)]\\
      &\quad -2 \varepsilon \sum_{i=1}^3\partial_i \chi \partial_i[w\times n+\Pi(Bu)].
\end{aligned}
\end{equation*}
Multiplying \eqref{333} by $\eta$, it is easy to deduce that
\begin{equation}\label{334}
\frac{1}{2}\frac{d}{dt}\int |\eta|^2 dx+\varepsilon \int |\nabla \eta|^2 dx
=\int F\cdot \eta dx+\varepsilon \int \Delta(\Pi B)\cdot u \cdot \eta dx.
\end{equation}
It is easy to deduce that
\begin{equation}\label{335}
\begin{aligned}
&\|\chi[F_1 \times n+\Pi(BF_2)]\|
\le C_2(\|\nabla u\|_{L^\infty}\|\nabla u\|
    +\|\nabla d\|_{L^\infty}(\|\nabla \Delta d\|+\|\Delta d\|)+\|\nabla p\|),\\
&\|\chi F_3\|
\le \varepsilon \|\nabla^2 u\|+C_3(1+\|u\|_{L^\infty})(\|u\|+\|\nabla u\|).
\end{aligned}
\end{equation}
Notice that the term $F_4$ are supported away from the boundary, we can
control all the derivatives by the $\|\cdot\|_{\mathcal{H}^m}$. Hence, we find
\begin{equation}\label{336}
\|F_4\|\le \varepsilon \|\nabla^2 u\|+C_3(1+\|u\|_{L^\infty})\|u\|_{\mathcal{H}^{1}}.
\end{equation}
Integrating by parts, it is easy to deduce that
\begin{equation}\label{336-1}
\varepsilon \int \Delta(\Pi B)\cdot u \cdot \eta dx
\le \delta\varepsilon \int |\nabla \eta|^2  dx
+C_{\delta}C_{3}(\|\nabla u\|^2+\|u\|_{\mathcal{H}^{1}}^2).
\end{equation}
Substituting \eqref{335}-\eqref{336-1} into \eqref{334}
and integrating the resulting inequality over $[0, t]$, we have
\begin{equation}\label{337}
\begin{aligned}
&\frac{1}{2}\int |\eta|^2(t) dx+\varepsilon \int_0^t \int |\nabla \eta|^2 dxd\tau\\
&\le \frac{1}{2}\int |\eta_0|^2 dx
+ \int_0^t \|\nabla p\|\|\eta\|d\tau
+\delta \varepsilon^2 \int_0^t \|\nabla^2 u\|^2d\tau
+\delta \int_0^t\|\nabla \Delta d\|^2d\tau\\
&\quad  +C(1+Q(t))\int_0^t(\|u\|_{\mathcal{H}^{1}}^2+\|\nabla u\|^2)d\tau.
\end{aligned}
\end{equation}
Therefore, we complete the proof of lemma \ref{lemma3.3}.
\end{proof}

Next, we can establish the following conormal estimates for the equivalent quantity $\eta$.
\begin{lemm}\label{lemma3.4}
For $m \ge  1$ and a smooth solution to \eqref{eq1} and \eqref{bc2}, it holds that
for $\varepsilon \in (0, 1]$
\begin{equation}\label{341}
\begin{aligned}
&\underset{0\le \tau \le t}{\sup}
\|\eta(\tau)\|_{\mathcal{H}^{m-1}}^2
+\varepsilon \int_0^t \|\nabla \eta\|_{\mathcal{H}^{m-1}}^2 d\tau\\
&\le  C_{m+2}\left\{(\|u_0\|_{\mathcal{H}^{m}}^2
      +\|\nabla u_0\|_{\mathcal{H}^{m-1}}^2)
     +\delta \varepsilon^2 \int_0^t \|\nabla^2 u\|_{\mathcal{H}^{m-1}}^2 d\tau
     +\delta \int_0^t \|\nabla \Delta d\|_{\mathcal{H}^{m-1}}^2 d\tau\right\}\\
&\quad +C_{m+2}\left\{\int_0^t \|\nabla p\|_{\mathcal{H}^{m-1}}\|\eta\|_{\mathcal{H}^{m-1}} d\tau+C_\delta(1+P(Q(t))\int_0^t N_m(t)d\tau\right\}.
\end{aligned}
\end{equation}
\end{lemm}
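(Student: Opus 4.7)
The plan is to proceed by induction on $m$, with Lemma \ref{lemma3.3} serving as the base case $m=1$. Suppose the estimate holds up to order $m-2$. For multi-indices $\alpha$ with $|\alpha|=m-1$, I would apply $\mathcal{Z}^\alpha$ to the convection-diffusion equation \eqref{333} satisfied by $\eta$, obtaining
\[
\partial_t\mathcal{Z}^\alpha\eta+u\cdot\nabla\mathcal{Z}^\alpha\eta-\varepsilon\Delta\mathcal{Z}^\alpha\eta
=\mathcal{Z}^\alpha\bigl(\chi[F_1\times n+\Pi(BF_2)]+\chi F_3+F_4+\varepsilon\Delta(\Pi B)\cdot u\bigr)+\mathcal{C}^\alpha,
\]
where $\mathcal{C}^\alpha=-[\mathcal{Z}^\alpha,u\cdot\nabla]\eta+\varepsilon[\mathcal{Z}^\alpha,\Delta]\eta$. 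Multiplying by $\mathcal{Z}^\alpha\eta$ and integrating over $\Omega$, the crucial simplification is that the homogeneous Dirichlet condition \eqref{304} propagates: since $\eta|_{\partial\Omega}=0$ and each $Z_i$ is either tangential or carries the vanishing factor $\varphi(z)$, we have $\mathcal{Z}^\alpha\eta|_{\partial\Omega}=0$, so integration by parts against $-\varepsilon\Delta\mathcal{Z}^\alpha\eta$ produces the clean dissipative term $\varepsilon\|\nabla\mathcal{Z}^\alpha\eta\|^2$ with no boundary contribution, and the convection term $u\cdot\nabla\mathcal{Z}^\alpha\eta\cdot\mathcal{Z}^\alpha\eta$ integrates to zero modulo lower-order commutators from $\nabla\cdot u=0$ and the non-commutation of $Z_3$ with $\partial_z$.

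Next I would estimate the right-hand side in $L^2$-in-space, integrated in time. The convection commutator $[\mathcal{Z}^\alpha,u\cdot\nabla]\eta$ is controlled exactly as in \eqref{3211}--\eqref{3214} via Proposition \ref{prop2.2}, producing $Q(t)$ times $N_m(\tau)$. The diffusion commutator $\varepsilon[\mathcal{Z}^\alpha,\Delta]\eta$ contains at worst two derivatives of $\eta$ hitting smooth coefficients; using Cauchy--Schwarz and the equivalence \eqref{307}--\eqref{308}, it is absorbed by $\delta\varepsilon\|\nabla\eta\|_{\mathcal{H}^{m-1}}^2+\delta\varepsilon^2\|\nabla^2u\|_{\mathcal{H}^{m-1}}^2$ plus lower-order conormal norms. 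The forcing terms are handled term by term: the $w\cdot\nabla u$ and $\nabla\times(\nabla d\cdot\Delta d)$ pieces of $F_1$ yield products estimated by Proposition \ref{prop2.2}, trading one factor to $L^\infty$ via $Q(t)$ and keeping the other in $\mathcal{H}^{m-1}$; the $F_2$ contribution produces exactly the pressure term $\int_0^t\|\nabla p\|_{\mathcal{H}^{m-1}}\|\eta\|_{\mathcal{H}^{m-1}}d\tau$ in the statement; $\chi F_3$ contributes an $\varepsilon\|\nabla^2u\|_{\mathcal{H}^{m-1}}$ part that is again absorbed into the $\delta\varepsilon^2$ term by Cauchy--Schwarz, together with lower-order terms; $F_4$ is compactly supported away from $\partial\Omega$ so all derivatives are conormally controlled directly by $N_m$. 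For $\varepsilon\mathcal{Z}^\alpha[\Delta(\Pi B)\cdot u]$, I integrate by parts one derivative back onto $\mathcal{Z}^\alpha\eta$, converting it to an $\varepsilon$-weighted $\nabla\eta$ term that is absorbed.

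The main technical obstacle is the careful bookkeeping of the diffusion commutator $\varepsilon[\mathcal{Z}^\alpha,\Delta]\eta$ together with the $\varepsilon\mathcal{Z}^\alpha F_3$ contributions: these are the only sources of the $\delta\varepsilon^2\int\|\nabla^2u\|_{\mathcal{H}^{m-1}}^2$ term on the right-hand side, and getting the powers of $\varepsilon$ right (so that nothing is left over that cannot be absorbed by the dissipation $\varepsilon\|\nabla\eta\|_{\mathcal{H}^{m-1}}^2$ or by the $\delta\varepsilon^2\|\nabla^2u\|_{\mathcal{H}^{m-1}}^2$ term) requires distinguishing between tangential and normal derivatives inside the commutator. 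A secondary difficulty is that several lower-order conormal norms appearing from commutators are of order $m-2$ in $\eta$; these are handled by the inductive hypothesis, which closes the argument after summing over $|\alpha|\le m-1$ and choosing $\delta_1$ small.
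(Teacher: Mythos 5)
Your overall architecture (induction on $m$ with Lemma \ref{lemma3.3} as the base case, applying $\mathcal{Z}^\alpha$ to \eqref{333}, exploiting the homogeneous Dirichlet condition \eqref{304} to integrate the dissipation by parts without boundary terms, and estimating the forcings $F_1,\dots,F_4$ and the $\varepsilon\Delta(\Pi B)\cdot u$ term as you describe) matches the paper's proof. However, there is a genuine gap in your treatment of the convection commutator $\mathcal{C}_2^\alpha=-[\mathcal{Z}^\alpha,u\cdot\nabla]\eta$, which you dismiss as being ``controlled exactly as in \eqref{3211}--\eqref{3214}.'' That argument does not transfer. In \eqref{3211}--\eqref{3214} the commutator acts on $u$, and the resulting factor $\|\nabla u\|_{\mathcal{H}^{m-1}}$ is part of $N_m$. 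Here the commutator acts on $\eta$, and the analogous factor would be $\int_0^t\|\nabla\eta\|_{\mathcal{H}^{m-2}}^2\,d\tau$ \emph{without} any power of $\varepsilon$; since $\nabla\eta$ contains $\partial_z\eta\sim\partial_z\partial_n u$, this is a genuine second normal derivative of $u$ that is only controlled with a prefactor $\varepsilon$ (or $\varepsilon^2$) and cannot be absorbed. The tangential pieces $\mathcal{Z}^\gamma\partial_{y_i}\eta=\mathcal{Z}^\gamma Z_i\eta$ are harmless, but the normal piece is not, and the paper flags this explicitly: ``it is desired to obtain an estimate independent of $\partial_z\eta$.''

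The missing idea is the following structural use of the boundary condition: writing $u\cdot\nabla=u^1\partial_{y_1}+u^2\partial_{y_2}+(u\cdot N)\partial_z$ in local coordinates, every dangerous term in $\mathcal{C}_2^\alpha$ carries a coefficient built from $u\cdot N$, which vanishes on $\partial\Omega$. One then factors $(u\cdot N)\partial_z=\frac{u\cdot N}{\varphi(z)}\,Z_3$ (and similarly $\mathcal{Z}^{\beta}(u\cdot N)\mathcal{Z}^{\gamma}\partial_z\eta=\mathcal{Z}^{\widetilde\beta}\bigl(\frac{u\cdot N}{\varphi(z)}\bigr)\mathcal{Z}^{\widetilde\gamma}(Z_3\eta)$ modulo smooth bounded functions of $\varphi$), and controls $\bigl\|\frac{u\cdot N}{\varphi}\bigr\|_{L^\infty}\le C\|u\|_{W^{1,\infty}}$ and $\sum_{|\beta|\le1}\bigl\|\mathcal{Z}^\beta\bigl(\frac{u\cdot N}{\varphi}\bigr)\bigr\|_{\mathcal{H}^{m-2}}\le C\|\partial_z(u\cdot N)\|_{\mathcal{H}^{m-1}}$ by the Hardy inequality, using also ${\rm div}\,u=0$. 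This converts every occurrence of $\partial_z\eta$ into the conormal derivative $Z_3\eta$, so that only $\|\eta\|_{\mathcal{H}^{m-1}}$ and $\|u\|_{\mathcal{H}^m}$ appear, as in \eqref{3410-1}--\eqref{3411}. Without this step your bound on $\mathcal{C}_2^\alpha$ does not close. (A minor secondary point: the $\delta\varepsilon^2\int\|\nabla^2u\|_{\mathcal{H}^{m-1}}^2$ term on the right of \eqref{341} originates from $F_3$ and $F_4$; the diffusion commutator instead produces $C\varepsilon\int\|\nabla\eta\|_{\mathcal{H}^{m-2}}^2$, which is removed by the induction hypothesis rather than by absorption.)
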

\begin{proof}
The case for $m=1$ is already proved in Lemma \ref{lemma3.3}. Assume that
\eqref{341} is proved for $k=m-2$. We shall prove that is holds for $k=m-1 \ge 1$.
For $|\alpha|=m-1$, applying the operator $\mathcal{Z}^\alpha$ to the
equation \eqref{333} , we find
\begin{equation}\label{342}
\partial_t \mathcal{Z}^\alpha \eta+u\cdot \nabla \mathcal{Z}^\alpha \eta
-\varepsilon \mathcal{Z}^\alpha \Delta \eta=\mathcal{Z}^\alpha F
+\varepsilon \mathcal{Z}^\alpha (\Delta(\Pi B)\cdot u)+\mathcal{C}_2^{\alpha},
\end{equation}
where
\begin{equation*}
\begin{aligned}
&\mathcal{C}_2^{\alpha}=-[\mathcal{Z}^\alpha, u\cdot \nabla ]\eta,\\
&F=\chi[F_1 \times n+\Pi(BF_2)]+\chi F_3+F_4.
\end{aligned}
\end{equation*}
Multiplying \eqref{342} by $\mathcal{Z}^\alpha \eta$, it is easy to deduce that
\begin{equation}\label{343}
\begin{aligned}
&\frac{1}{2}\int |\mathcal{Z}^\alpha \eta(t)|^2 dx
-\frac{1}{2}\int |\mathcal{Z}^\alpha \eta_0|^2 dx\\
&=\varepsilon \int_0^t \int \mathcal{Z}^\alpha \Delta \eta \cdot \mathcal{Z}^\alpha \eta~ dxd\tau
+\int_0^t \int \mathcal{Z}^\alpha F \cdot \mathcal{Z}^\alpha \eta ~dxd\tau\\
&\quad +\varepsilon\int_0^t \int  \mathcal{Z}^\alpha (\Delta(\Pi B)\cdot u)
\cdot \mathcal{Z}^\alpha \eta ~dxd\tau
+\int_0^t \int \mathcal{C}_2^\alpha \cdot \mathcal{Z}^\alpha \eta ~dxd\tau.
\end{aligned}
\end{equation}
In the local basis, it holds that
\begin{equation*}
\partial_j=\beta_j^1 \partial_{y_1}+\beta_j^1 \partial_{y_1}+\beta_j^1 \partial_{z},
~~j=1,2,3,
\end{equation*}
for harmless functions $\beta_j^i, ~i,j=1,2,3$ depending on the boundary regularity and
weight function $\phi(z)$. Therefore, the following commutation expansion holds:
\begin{equation*}
\mathcal{Z}^\alpha \Delta \eta
=\Delta \mathcal{Z}^\alpha \eta
+\sum_{|\beta|\le m-2} C_{1\beta}\partial_{zz}\mathcal{Z}^\beta \eta
+\sum_{|\beta|\le m-1} (C_{2\beta}\partial_{z}\mathcal{Z}^\beta \eta
                        +C_{3\beta}Z_y\mathcal{Z}^\beta \eta).
\end{equation*}
Then, integrating by par and applying the Cauchy inequality, we obtain
\begin{equation}\label{344}
\begin{aligned}
&\varepsilon \int_0^t \int \mathcal{Z}^\alpha \Delta \eta \cdot \mathcal{Z}^\alpha \eta dxd\tau\\
&=\varepsilon\int_0^t \int \Delta \mathcal{Z}^\alpha \eta \cdot \mathcal{Z}^\alpha \eta dxd\tau
  +\sum_{|\beta|\le m-2}\varepsilon \int_0^t \int C_{1\beta}\partial_{zz} \mathcal{Z}^\beta \eta\cdot \mathcal{Z}^\alpha \eta dxd\tau\\
&\quad +\sum_{|\beta|\le m-1}\varepsilon \int_0^t
   \int(C_{2\beta}\partial_{z}\mathcal{Z}^\beta \eta
  +C_{3\beta}Z_y\mathcal{Z}^\beta \eta)\cdot \mathcal{Z}^\alpha \eta dxd\tau\\
&\le -\frac{3}{4}\varepsilon \int_0^t \|\nabla \mathcal{Z}^\alpha \eta\|^2 d\tau
     +C\varepsilon \int_0^t \|\nabla \eta\|_{\mathcal{H}^{m-2}}^2 d\tau
     +C_{m+2}\varepsilon\int_0^t  \|\eta\|_{\mathcal{H}^{m-1}}^2 d\tau.
\end{aligned}
\end{equation}
Note that there is no boundary term in the integrating by parts since $\mathcal{Z}^\alpha \eta$
vanishes one the boundary.
Substituting \eqref{344} into \eqref{343}, we find
\begin{equation}\label{345}
\begin{aligned}
&\frac{1}{2}\int |\mathcal{Z}^\alpha \eta(t)|^2 dx
+\frac{3}{4}\varepsilon \int_0^t \|\nabla \mathcal{Z}^\alpha \eta\|^2 d\tau\\
&\le\frac{1}{2}\int |\mathcal{Z}^\alpha \eta_0|^2 dx
+C\varepsilon \int_0^t \|\nabla \eta\|_{\mathcal{H}^{m-2}}^2 d\tau
+C_{m+2}\varepsilon\int_0^t  \|\eta\|_{\mathcal{H}^{m-1}}^2 d\tau\\
&\quad+\int_0^t \int \mathcal{Z}^\alpha F \cdot \mathcal{Z}^\alpha \eta dxd\tau
+\varepsilon\int_0^t \int  \mathcal{Z}^\alpha (\Delta(\Pi B)\cdot u)
\cdot \mathcal{Z}^\alpha \eta ~dxd\tau\\
&\quad  +\int_0^t \int \mathcal{C}_2^{\alpha}\cdot \mathcal{Z}^\alpha \eta dxd\tau.
\end{aligned}
\end{equation}
Similar to \eqref{335}-\eqref{336}, we apply the Proposition \ref{prop2.2} to deduce that
\begin{equation}\label{346}
\begin{aligned}
&\int_0^t \int \mathcal{Z}^\alpha(\chi F_1 \times n)\cdot \mathcal{Z}^\alpha \eta dxd\tau
\le  C_m\left\{ \delta \! \!\int_0^t  \! \!\|\nabla \Delta d\|_{\mathcal{H}^{m-1}}^2 d\tau
    +C_\delta(1+Q(t)) \! \!\int_0^t \! \! N_m(\tau)d\tau\right\},\\
&\int_0^t \int \mathcal{Z}^\alpha(\chi \Pi(BF_2))\cdot \mathcal{Z}^\alpha \eta dxd\tau\\
&\le C_{m+1}\left\{\int_0^t \|\nabla p\|_{\mathcal{H}^{m-1}}\|\eta\|_{\mathcal{H}^{m-1}} d\tau
+(1+Q(t))\int_0^t N_m (\tau)d\tau\right\},\\
&\int_0^t \int \mathcal{Z}^\alpha(\chi F_3)\cdot \mathcal{Z}^\alpha \eta dxd\tau
\le C_{m+2}\left\{\delta \varepsilon^2 \int \|\nabla^2 u\|_{\mathcal{H}^{m-1}}^2 d\tau
  +C_\delta(1+Q(t))\int_0^t N_m(\tau)d\tau\right\},
\end{aligned}
\end{equation}
and
\begin{equation}\label{347}
\int_0^t \int \mathcal{Z}^\alpha F_4 \cdot \mathcal{Z}^\alpha \eta dxd\tau
\le C_{m+1}\left\{\delta \varepsilon^2 \int_0^t \|\nabla^2 u\|_{\mathcal{H}^{m-1}}^2 d\tau
     +C_\delta(1+Q(t))\int_0^t N_m(\tau)d\tau\right\}.
\end{equation}
Then, the combination of \eqref{346}-\eqref{347} gives directly
\begin{equation}\label{348}
\begin{aligned}
&\int_0^t \int \mathcal{Z}^\alpha F \cdot \mathcal{Z}^\alpha \eta dxd\tau\\
&\le C_{m+2}\left\{ \delta \int_0^t \|\nabla \Delta d\|_{\mathcal{H}^{m-1}}^2 d\tau
+\delta \varepsilon^2 \int \|\nabla^2 u\|_{\mathcal{H}^{m-1}}^2 d\tau\right\}\\
&\quad +C_{m+2}\left\{
\int_0^t \|\nabla p\|_{\mathcal{H}^{m-1}}\|\eta\|_{\mathcal{H}^{m-1}} d\tau
     +C_\delta (1+Q(t))\int_0^tN_m(\tau)d\tau\right\}.
\end{aligned}
\end{equation}
Integrating by parts, one arrives at directly
\begin{equation}\label{349}
\varepsilon\int_0^t \int  \mathcal{Z}^\alpha (\Delta(\Pi B)\cdot u)
\cdot \mathcal{Z}^\alpha \eta ~dxd\tau
\le \delta \varepsilon^2 \int_0^t \|\nabla \eta\|_{\mathcal{H}^{m-1}}^2 d\tau
+C_\delta C_{m+2}\int_0^t N_m(\tau)d\tau.
\end{equation}
The remaining term are more involved, since it is desired to obtain an estimate
independent of $\partial_z \eta$.
First, it is easy to deduce that
\begin{equation}\label{3410-1}
\begin{aligned}
\mathcal{C}_2^{\alpha}
&=-\sum_{|\beta|\ge 1, \beta+\gamma=\alpha}\sum_{i=1}^2 C_{\alpha, \beta}
   \mathcal{Z}^{\beta}u_i \mathcal{Z}^{\gamma}\partial_{y_i}\eta
  -\sum_{|\beta|\ge 1, \beta+\gamma=\alpha}C_{\alpha, \beta}
   \mathcal{Z}^{\beta}(u\cdot N)\mathcal{Z}^{\gamma}\partial_{z}\eta\\
&\quad -\sum_{|\beta|\le m-2}C(\alpha, \beta, z)(u\cdot N)\partial_z \mathcal{Z}^{\beta}\eta,
\end{aligned}
\end{equation}
where $C(\alpha, \beta, z)$ are smooth functions depending on $\alpha, \beta$
and $\varphi(z)$.
By virtue of the Proposition \ref{prop2.2}, we obtain
\begin{equation}\label{3410-2}
\begin{aligned}
&\sum_{|\beta|\ge 1, \beta+\gamma=\alpha}\sum_{i=1}^2 C_{\alpha, \beta}
  \int_0^t \|\mathcal{Z}^{\beta}u_i \mathcal{Z}^{\gamma}\partial_{y_i}\eta\|^2 d\tau\\
&\le C\|\mathcal{Z}u\|_{L^\infty_{x,t}}^2\int_0^t \|Z_y \eta\|_{\mathcal{H}^{m-2}}^2d\tau
     +C\|Z_y \eta\|_{L^\infty_{x,t}}^2\int_0^t \|\mathcal{Z}u\|_{\mathcal{H}^{m-2}}^2d\tau
\end{aligned}
\end{equation}
and
\begin{equation}\label{3410-3}
\begin{aligned}
&\sum_{|\beta|\le m-2}C(\alpha, \beta, z)
\int_0^t \|(u\cdot N)\partial_z \mathcal{Z}^{\beta}\eta\|^2 d\tau\\
&\le C\sum_{|\beta|\le m-2}\left\|\frac{u\cdot N}{\varphi(z)}\right\|_{L^\infty_{x,t}}^2
     \int_0^t \|Z_3 \mathcal{Z}^{\beta}\eta\|^2 d\tau\\
&\le C\|u\|_{W^{1,\infty}_{x,t}}^2
     \int_0^t \|\eta\|_{\mathcal{H}^{m-1}}^2 d\tau,
\end{aligned}
\end{equation}
where we have used the Hardy inequality
$\|\frac{u\cdot N}{\varphi}\|_{L^\infty}\le C\|u\|_{W^{1,\infty}}$
due to the boundary condition $u\cdot n|_{\partial \Omega}=0$.
On the other hand, it is easy to deduce that
\begin{equation*}
\begin{aligned}
&-\sum_{|\beta|\ge 1, \beta+\gamma=\alpha}C_{\alpha, \beta}
   \mathcal{Z}^{\beta}(u\cdot N)\mathcal{Z}^{\gamma}\partial_{z}\eta\\
&=-\sum_{|\beta|\ge 1, \beta+\gamma=\alpha}C_{\alpha, \beta}
   \frac{1}{\varphi(z)}\mathcal{Z}^{\beta}
   (u\cdot N)\varphi(z)\mathcal{Z}^{\gamma}\partial_{z}\eta\\
&=\sum_{\widetilde{\beta}\le \beta, \widetilde{\gamma} \le \gamma}
  \mathcal{Z}^{\widetilde{\beta}} \left(\frac{u\cdot N}{\varphi(z)}\right)
   \mathcal{Z}^{\widetilde{\gamma}}(Z_3 \eta),
\end{aligned}
\end{equation*}
where $|\widetilde{\beta}|+|\widetilde{\gamma}|\le m-1, |\widetilde{\gamma}|\le m-2$,
and $C_{\alpha, \widetilde{\beta}, \widetilde{\gamma}}$ are some smooth bounded
functions depending on $\varphi(z)$.
If $\widetilde{\beta}=0$, and hence $|\widetilde{\gamma}|\le m-2$,
it holds that
\begin{equation}\label{3410-4}
\begin{aligned}
\int_0^t \|\mathcal{Z}^{\widetilde{\beta}}\left(\frac{u\cdot N}{\varphi(z)}\right)
          \mathcal{Z}^{\widetilde{\gamma}}(Z_3 \eta)\|^2 d\tau
\le C\|u\|_{W^{1,\infty}_{x,t}}^2
     \int_0^t \|\eta\|_{\mathcal{H}^{m-1}}^2 d\tau.
\end{aligned}
\end{equation}
If $\widetilde{\beta} \neq 0$, one attains that
\begin{equation}\label{3410-5}
\begin{aligned}
&\int_0^t \|\mathcal{Z}^{\widetilde{\beta}}\left(\frac{u\cdot N}{\varphi(z)}\right)
          \mathcal{Z}^{\widetilde{\gamma}}(Z_3 \eta)\|^2 d\tau\\
&\le \left\|\mathcal{Z}\left(\frac{u \cdot N}{\varphi(z)}\right)\right\|_{L^\infty_{x,t}}^2
     \int_0^t \|Z_3 \eta\|_{\mathcal{H}^{m-2}}^2 d\tau
     +\|Z_3 \eta\|_{L^\infty_{x,t}}^2\int\|\mathcal{Z}\left(\frac{u \cdot N}{\varphi(z)}\right)\|_{\mathcal{H}^{m-2}}^2 d\tau\\
&\le \|\partial_z(u\cdot N)\|_{\mathcal{H}^{1,\infty}}^2
     \int_0^t \|\eta\|_{\mathcal{H}^{m-1}}^2 d\tau
     +\|Z_3 \eta\|_{L^\infty_{x,t}}^2\int_0^t \|\partial_z(u\cdot N)\|_{\mathcal{H}^{m-1}}^2d\tau\\
&\le C(\|Z_y u\|_{\mathcal{H}^{1,\infty}}^2+\|Z_3 \eta\|_{L^\infty_{x,t}}^2)
     \int_0^t(\|\eta\|_{\mathcal{H}^{m-1}}^2+\|u\|_{\mathcal{H}^{m}}^2)d\tau,
\end{aligned}
\end{equation}
where we have used the divergence free condition for the velocity and
the Hardy inequality:
\begin{equation*}
\sum_{|\beta|\le 1}\|\mathcal{Z}^\beta\left(\frac{u\cdot N}{\varphi(z)}\right)\|_{\mathcal{H}^{m-2}}
\le C\|\partial_z(u\cdot N)\|_{\mathcal{H}^{m-1}}^2.
\end{equation*}
The combination of \eqref{3410-1}-\eqref{3410-5} yields directly
\begin{equation}\label{3411}
\int_0^t \int \mathcal{C}_2^{\alpha}\cdot \mathcal{Z}^\alpha \eta dxd\tau
\le C_m(1+P(Q(t)))\int_0^t N_m(t)d\tau.
\end{equation}
Substituting \eqref{348}, \eqref{349} and \eqref{3411} into \eqref{345}, we find
\begin{equation*}
\begin{aligned}
&\frac{1}{2}\int |\mathcal{Z}^\alpha \eta(t)|^2 dx
+\frac{3\varepsilon}{4} \int_0^t \|\nabla \mathcal{Z}^\alpha \eta\|^2 d\tau\\
&\le C_{m+2}\left\{\frac{1}{2}\int |\mathcal{Z}^\alpha \eta_0|^2 dx
+C\varepsilon \int_0^t \|\nabla \eta\|_{\mathcal{H}^{m-2}}^2 d\tau
+\delta \int_0^t \|\nabla \Delta d\|_{\mathcal{H}^{m-1}}^2d\tau\right\}\\
&\quad +C_{m+2}\left\{
     \delta \varepsilon^2 \int_0^t \|\nabla^2 u\|_{\mathcal{H}^{m-1}}^2 d\tau
     +\int_0^t \|\nabla p\|_{\mathcal{H}^{m-1}}\|\eta\|_{\mathcal{H}^{m-1}} d\tau\right\}\\
&\quad +C_{m+2}C_\delta (1+P(Q(t)))\int_0^t N_m(t)d\tau.
\end{aligned}
\end{equation*}
By the induction assumption, one can eliminate the term $\varepsilon \int_0^t \|\nabla \eta\|_{\mathcal{H}^{m-2}}^2 d\tau$.
Therefore, we complete the proof of the Lemma \ref{lemma3.4}.
\end{proof}

Similar to the analysis of \eqref{301}-\eqref{302}, it is easy to deduce that
\begin{equation*}
\begin{aligned}
\int_0^t \|\nabla^2 u\|_{\mathcal{H}^{m-1}}^2 d\tau
&\le C_{m+2}\left\{\int_0^t \|\nabla \eta\|_{\mathcal{H}^{m-1}}^2 d\tau
    +\int_0^t \|\nabla u\|_{\mathcal{H}^{m}}^2 d\tau
    +\int_0^t N_m(\tau) d\tau\right\},
\end{aligned}
\end{equation*}
which, together with \eqref{321}, \eqref{341-1} and \eqref{341}, yields directly
\begin{equation}\label{3201}
\begin{aligned}
&\underset{0\le \tau \le t}{\sup}
(\|(u, \nabla d)\|_{\mathcal{H}^{m}}^2+\|(\nabla u, \Delta d)\|_{\mathcal{H}^{m-1}}^2)
+\varepsilon \int_0^t \|\nabla u\|_{\mathcal{H}^{m}}^2 d\tau\\
&\quad  +\varepsilon \int_0^t \|\nabla^2 u\|_{\mathcal{H}^{m-1}}^2 d\tau
+\int_0^t \|\Delta d\|_{\mathcal{H}^{m}}^2 d\tau
+\int_0^t \|\nabla \Delta d\|_{\mathcal{H}^{m-1}}^2 d\tau\\
&\le C_{m+2}\left\{\|(u_0, \nabla d_0)\|_{\mathcal{H}^{m}}^2
    +\|(\nabla u_0, \Delta d_0)\|_{\mathcal{H}^{m-1}}^2
    +\int_0^t \|\nabla p\|_{\mathcal{H}^{m-1}}\|\eta\|_{\mathcal{H}^{m-1}} d\tau\right.\\
&\quad \quad
+\int_0^t\|\nabla^2 p_1\|_{\mathcal{H}^{m-1}}\|u\|_{\mathcal{H}^m}d\tau
+\varepsilon^{-1}\int_0^t\|\nabla p_2\|_{\mathcal{H}^{m-1}}^2d\tau
+\left.\left(1+P(Q(t))\right)\int_0^t N_m(t)d\tau\right\}.
\end{aligned}
\end{equation}

\subsection{Estimates for Pressure}
\quad
It remains to estimate the pressure and the $L^\infty-$norm on the right-hand side
of the estimates of Lemmas \ref{lemma3.2}, \ref{lemma3.4-1} and \ref{lemma3.4}.
The aim of this section is to give the estimate for the  pressure.

\begin{lemm}\label{lemma3.5}
For $m \ge 2$, we have the following estimates for the pressure:
\begin{equation}\label{351}
\begin{aligned}
&\int_0^t \|\nabla p_1\|_{\mathcal{H}^{m-1}}^2 d\tau
 +\int_0^t \|\nabla^2 p_1\|_{\mathcal{H}^{m-1}}^2 d\tau\\
&\le C_{m+2}Q(t)\int_0^t N_m(\tau) d\tau
+C_{m+2}Q(t)\int_0^t \|\nabla \Delta d\|_{\mathcal{H}^{m-1}}^2 d\tau,
\end{aligned}
\end{equation}
and
\begin{equation}\label{351-1}
\int_0^t \|\nabla p_2\|_{\mathcal{H}^{m-1}}^2 d\tau
\le C_{m+2} \varepsilon  \int_0^t N_m(\tau)d\tau.
\end{equation}
\end{lemm}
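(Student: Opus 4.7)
The plan is to treat $p_1$ and $p_2$ separately as solutions of elliptic Neumann problems and reduce each estimate to applications of Proposition~\ref{prop2.2} together with conormal elliptic regularity and the trace theorem from Proposition~\ref{prop2.3}.

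For the Euler-type pressure $p_1$ solving \eqref{3215}, I would first derive a conormal elliptic estimate of the form
\begin{equation*}
\|\nabla p_1\|_{\mathcal{H}^{m-1}}^2+\|\nabla^2 p_1\|_{\mathcal{H}^{m-1}}^2 \le C_{m+2}\bigl(\|u\cdot \nabla u\|_{\mathcal{H}^{m-1}}^2+\|\nabla d\cdot \Delta d\|_{\mathcal{H}^{m-1}}^2+\|\nabla(\nabla d\cdot \Delta d)\|_{\mathcal{H}^{m-1}}^2\bigr),
\end{equation*}
obtained by commuting $\mathcal{Z}^\alpha$ with the equation, exploiting the divergence structure of the source, and handling the Neumann boundary contribution together with lower-order commutators via the trace inequality of Proposition~\ref{prop2.3}. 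The product $u\cdot \nabla u$ is then controlled by Proposition~\ref{prop2.2} as
\begin{equation*}
\int_0^t \|u\cdot \nabla u\|_{\mathcal{H}^{m-1}}^2\, d\tau \le \|u\|_{L^\infty_{x,t}}^2 \int_0^t \|\nabla u\|_{\mathcal{H}^{m-1}}^2\, d\tau +\|\nabla u\|_{L^\infty_{x,t}}^2 \int_0^t \|u\|_{\mathcal{H}^{m-1}}^2\, d\tau \le C\, Q(t)\int_0^t N_m(\tau)\, d\tau,
\end{equation*}
and analogously for $\nabla d\cdot \Delta d$; the worst piece in $\nabla(\nabla d\cdot \Delta d)$, namely $\|\nabla d\|_{L^\infty}^2\|\nabla\Delta d\|_{\mathcal{H}^{m-1}}^2$, produces exactly the $\|\nabla\Delta d\|_{\mathcal{H}^{m-1}}^2$ contribution appearing on the right-hand side of \eqref{351}.

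For the Navier--Stokes pressure $p_2$ solving \eqref{3216}, the estimate reduces entirely to controlling the boundary datum $\partial_n p_2=\varepsilon\Delta u\cdot n$ since $\Delta p_2=0$. The decisive observation, following Masmoudi--Rousset, is that on $\partial\Omega$ the combination $\Delta u\cdot n$ can be rewritten purely in terms of tangential derivatives of $u$ together with curvature correction factors, thanks to the boundary identities $u\cdot n=0$ and ${\rm div}\,u=0$. After commuting $\mathcal{Z}^\alpha$ with the elliptic problem, integrating by parts against $p_2$ so that only one copy of $\varepsilon$ is needed, and using Propositions~\ref{prop2.1} and~\ref{prop2.3} to translate boundary norms into conormal interior quantities, one obtains $\|\nabla p_2\|_{\mathcal{H}^{m-1}}^2\le C_{m+2}\varepsilon\, N_m(\tau)$, yielding \eqref{351-1} upon time integration.

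The main obstacle is the careful execution of the conormal Neumann elliptic regularity, particularly the step for $p_2$ that converts $\Delta u\cdot n$ on the boundary into tangential quantities while preserving the $\varepsilon$-gain; a naive bound by $\|\nabla^2 u\|$ would lose this factor and destroy the uniform estimate. Once this boundary reduction is in place, all remaining bounds follow from bookkeeping applications of Propositions~\ref{prop2.2} and~\ref{prop2.3} together with the divergence-free condition on $u$.
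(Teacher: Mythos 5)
Your overall route coincides with the paper's: split the pressure, treat $p_1$ and $p_2$ as conormal elliptic Neumann problems applied to $\partial_t^{\alpha_0}p_i$, estimate the interior sources with Proposition~\ref{prop2.2}, and for $p_2$ invoke the Masmoudi--Rousset reduction $|\partial_t^{\alpha_0}\Delta u\cdot n|_{H^{|\alpha_1|-\frac12}(\partial\Omega)}\le C_{m+2}|\partial_t^{\alpha_0}u|_{H^{|\alpha_1|+\frac12}(\partial\Omega)}$ followed by the trace inequality; this is exactly \eqref{358}--\eqref{3510}, and you correctly identify it as the decisive step for preserving the $\varepsilon$-gain. (No integration by parts against $p_2$ is needed: the elliptic estimate already carries one factor of $\varepsilon$, so squaring gives $\varepsilon^2$, which is then relaxed to $\varepsilon$ using $\varepsilon\le 1$.) Your identification of $\|\nabla d\|_{L^\infty}^2\|\nabla\Delta d\|_{\mathcal{H}^{m-1}}^2$ as the source of the second term in \eqref{351} also matches \eqref{357} and the computation following it.

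There is, however, one under-specified step in your $p_1$ argument that, as written, would fail. The Neumann problem \eqref{352} contributes the boundary term $|\partial_t^{\alpha_0}(u\cdot\nabla u)\cdot n|_{H^{|\alpha_1|+\frac12}(\partial\Omega)}$ to the elliptic estimate \eqref{354}, and your displayed inequality silently absorbs it into $\|u\cdot\nabla u\|_{\mathcal{H}^{m-1}}^2$ ``via the trace inequality.'' But the trace estimate \eqref{25} at regularity $s=|\alpha_1|+\tfrac12$ requires control of $\|\nabla\partial_t^{\alpha_0}(u\cdot\nabla u)\|_{|\alpha_1|}$, which contains $u\cdot\nabla^2 u$; the quantity $\|\nabla^2 u\|_{\mathcal{H}^{m-1}}$ is not part of $N_m$ and is only controlled after multiplication by $\varepsilon$, so this route does not yield the bound $C_{m+2}Q(t)\int_0^t N_m\,d\tau$. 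The missing idea is the tangency identity used in the paper: since $u\cdot n=0$ on $\partial\Omega$, one has $(u\cdot\nabla u)\cdot n=-(u\cdot\nabla n)\cdot u$ on the boundary, so the Neumann datum is the zeroth-order quadratic expression $u\otimes u$ (contracted with $\nabla n$), whose trace in $H^{|\alpha_1|+\frac12}$ costs only $\|\nabla(u\otimes u)\|_{\mathcal{H}^{m-1}}+\|u\otimes u\|_{\mathcal{H}^{m}}$, both bounded by $Q(t)N_m$ via Proposition~\ref{prop2.2}; see \eqref{355}--\eqref{356}. Once this identity is inserted, the rest of your bookkeeping for $p_1$ goes through as in \eqref{357}.
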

\begin{proof}
We recall that we have $p=p_1+p_2$, where
\begin{equation}\label{352}
\Delta p_1=-{\rm div}(u\cdot \nabla u+\nabla d \cdot \Delta d),
\quad x\in \Omega,
\quad \partial_n p_1=-(u\cdot \nabla u)\cdot n,
\quad x\in \partial \Omega,
\end{equation}
and
\begin{equation}\label{353}
\Delta p_2=0,
\quad x\in \Omega,
\quad \partial_n p_2=\varepsilon \Delta u \cdot n,
\quad x\in \partial \Omega.
\end{equation}
For $|\alpha_0|+|\alpha_1|=m-1$, by virtue of the standard elliptic regularity results with
Neumann boundary conditions, we get that
\begin{equation}\label{354}
\begin{aligned}
&\|\nabla \partial_t^{\alpha_0} p_1\|_{|\alpha_1|}
+\|\nabla^2 \partial_t^{\alpha_0} p_1\|_{|\alpha_1|}\\
&\le C(\|\partial_t^{\alpha_0}{\rm div}(u\cdot \nabla u
       +\nabla d\cdot \Delta d )\|_{|\alpha_1|}
+\|\partial_t^{\alpha_0}(u\cdot \nabla u+\nabla d\cdot \Delta d) \|)\\
&\quad +C|\partial_t^{\alpha_0}(u\cdot \nabla u)\cdot n|_{{H^{|\alpha_1|+\frac{1}{2}}}(\partial \Omega)}.
\end{aligned}
\end{equation}
Since $u\cdot n=0$ on the boundary, we note that
\begin{equation*}
(u\cdot \nabla u)\cdot n
=-(u\cdot \nabla n)\cdot u, \quad  x\in \partial \Omega,
\end{equation*}
and hence that
\begin{equation}\label{355}
|\partial_t^{\alpha_0}(u\cdot \nabla u)\cdot n|_{{H^{|\alpha_1|+\frac{1}{2}}}(\partial \Omega)}
\le C_{m+2}|\partial_t^{\alpha_0}(u\otimes u)|_{{H^{|\alpha_1|+\frac{1}{2}}}(\partial \Omega)}.
\end{equation}
In view of the trace theorem in Proposition \ref{prop2.3}, one arrives at
\begin{equation*}
|\partial_t^{\alpha_0}(u\otimes u)|_{{H^{|\alpha_1|+\frac{1}{2}}}(\partial \Omega)}
\lesssim \|\nabla \partial_t^{\alpha_0}(u\otimes u)\|_{|\alpha_1|}
    +\|\partial_t^{\alpha_0}(u\otimes u)\|_{|\alpha_1|+1},
\end{equation*}
which, together with \eqref{355}, reads
\begin{equation}\label{356}
|\partial_t^{\alpha_0}(u\cdot \nabla u)\cdot n|_{{H^{|\alpha_1|+\frac{1}{2}}}(\partial \Omega)}
\le C_{m+2}\left(\|u\cdot \nabla u\|_{\mathcal{H}^{m-1}}
+\|u\otimes u\|_{\mathcal{H}^{m}}\right).
\end{equation}
Substituting \eqref{356} into \eqref{354} and integrating the resulting inequality
over $[0, t]$, we find
\begin{equation}\label{357}
\begin{aligned}
&\int_0^t \|\nabla p_1\|_{\mathcal{H}^{m-1}}^2 d\tau
 +\int_0^t \|\nabla^2 p_1\|_{\mathcal{H}^{m-1}}^2 d\tau\\
&=\int_0^t \|\nabla \partial_t^{\alpha_0} p_1\|_{|\alpha_1|}^2 d\tau
+\int_0^t \|\nabla^2 \partial_t^{\alpha_0} p_1\|_{|\alpha_1|}^2 d\tau\\
&\le C_{m+2}\left\{\int_0^t \|\nabla u\cdot \nabla u\|_{\mathcal{H}^{m-1}}^2 d\tau
+\int_0^t \|u\cdot \nabla u\|_{\mathcal{H}^{m-1}}^2 d\tau
+\int_0^t \|u\otimes u\|_{\mathcal{H}^{m}}^2 d\tau\right\}\\
&\quad +C_{m+2}\left\{\int_0^t \|\Delta d\cdot \Delta d\|_{\mathcal{H}^{m-1}}^2 d\tau
+\int_0^t \|\nabla d\cdot \nabla \Delta d\|_{\mathcal{H}^{m-1}}^2 d\tau\right\}\\
&\le C_{m+2}(\|u\|_{W^{1,\infty}_{x,t}}^2+\|\Delta d\|_{L^\infty_{x,t}}^2)
   \int_0^t (\|u\|_{\mathcal{H}^{m}}^2
       +\|\nabla u\|_{\mathcal{H}^{m-1}}^2+\|\Delta d\|_{\mathcal{H}^{m-1}}^2) d\tau\\
&\quad  +C_{m+2}\int_0^t \|\nabla d\cdot \nabla \Delta d\|_{\mathcal{H}^{m-1}}^2 d\tau,
\end{aligned}
\end{equation}
where we have used the Proposition \ref{prop2.2} in the last inequality.
On the other hand, the application of Proposition \ref{prop2.2} yields
\begin{equation*}
\begin{aligned}
&\int_0^t \|\nabla d\cdot \nabla \Delta d\|_{\mathcal{H}^{m-1}}^2 d\tau\\
&=\sum_{|\beta|\ge 1, |\beta|+|\gamma|=m-1} \int_0^t
  \|\mathcal{Z}^{\beta-1}\mathcal{Z} \nabla d\cdot
     \mathcal{Z}^\gamma \nabla \Delta d\|^2d\tau
     +\int_0^t \|\nabla d \cdot \mathcal{Z}^\alpha \nabla \Delta d\|^2 d\tau\\
&\le \|\mathcal{Z}\nabla d\|_{L^\infty_{x,t}}^2
      \int_0^t \|\nabla \Delta d\|_{\mathcal{H}^{m-2}}^2 d\tau
    +\|\nabla \Delta d\|_{L^\infty_{x,t}}^2
      \int_0^t \|\mathcal{Z}\nabla d\|_{\mathcal{H}^{m-2}}^2 d\tau\\
&\quad
+\|\nabla d\|_{L^\infty_{x,t}}^2 \int_0^t \|\nabla \Delta d\|_{\mathcal{H}^{m-1}}^2 d\tau\\
&\le C_1 Q(t)\int_0^t \|\nabla \Delta d\|_{\mathcal{H}^{m-1}}^2 d\tau
     +Q(t)\int_0^t \|\nabla d\|_{\mathcal{H}^{m-1}}^2 d\tau,
\end{aligned}
\end{equation*}
which, together with \eqref{357}, reads
\begin{equation*}
\int_0^t \|\nabla p_1\|_{\mathcal{H}^{m-1}}^2 d\tau
 +\int_0^t \|\nabla^2 p_1\|_{\mathcal{H}^{m-1}}^2 d\tau
\le C_{m+2}Q(t)\int_0^t N_m(t) d\tau
+C_{m+2}Q(t)\int_0^t \|\nabla \Delta d\|_{\mathcal{H}^{m-1}}^2 d\tau.
\end{equation*}
Hence, we complete the proof of estimate \eqref{351}.
It remains to estimate $p_2$. By using the elliptic regularity for the Neumann
problem again, we get that for $|\alpha_0|+|\alpha_1|=m-1$,
\begin{equation}\label{358}
\|\nabla \partial_t^{\alpha_0}p_2\|_{|\alpha_1|}
\le C\varepsilon |\partial_t^{\alpha_0} \Delta u\cdot n|_{H^{|\alpha_1|-\frac{1}{2}}(\partial \Omega)}.
\end{equation}
From the results by Masmoudi and Rousset \cite{Masmoudi-Rousset},
we have the following fact that
\begin{equation}\label{359}
|\partial_t^{\alpha_0} \Delta u\cdot n |_{H^{|\alpha_1|-\frac{1}{2}}(\partial \Omega)}
\le C_{m+2}|\partial_t^{\alpha_0} u|_{H^{|\alpha_1|+\frac{1}{2}}(\partial \Omega)}.
\end{equation}
Applying the trace inequality in Proposition \ref{prop2.3}, we get
\begin{equation}\label{3510}
|\partial_t^{\alpha_0} u|_{H^{|\alpha_1|+\frac{1}{2}}(\partial \Omega)}
\le C(\|\nabla \partial_t^{\alpha_0} u\|_{|\alpha_1|}
      +
      \|\partial_t^{\alpha_0} u\|_{|\alpha_1|+1}).
\end{equation}
Substituting \eqref{359}- \eqref{3510} into \eqref{358}
and integrating the resulting inequality over $[0, t]$, one arrives at
\begin{equation*}
\int_0^t \|\nabla p_2\|_{\mathcal{H}^{m-1}}^2 d\tau
=\int_0^t \|\nabla \partial_t^{\alpha_0}p_2\|_{|\alpha_1|}^2 d\tau
\le C_{m+2} \varepsilon  \int_0^t (\|\nabla u\|_{\mathcal{H}^{m-1}}^2
+\|u\|_{\mathcal{H}^{m}}^2)d\tau.
\end{equation*}
Hence, we complete the proof of the estimate \eqref{351-1}.
\end{proof}

By virtue of \eqref{351}, the H\"{o}lder and Cauchy inequalities, it is easy to deduce that
\begin{equation}\label{3202}
\begin{aligned}
&\int_0^t\|\nabla^2 p_1\|_{\mathcal{H}^{m-1}}\|u\|_{\mathcal{H}^{m}}d\tau\\
&\le \left(\int_0^t \|\nabla^2 p_1\|_{\mathcal{H}^{m-1}}^2d\tau\right)^{\frac{1}{2}}
     \left(\int_0^t \|u\|_{\mathcal{H}^{m}}^2d\tau\right)^{\frac{1}{2}}\\
&\le C_{m+2}Q(t)^{\frac{1}{2}}
     \left(\int_0^t N_m(\tau) d\tau\right)^{\frac{1}{2}}
     \left(\int_0^t \|u\|_{\mathcal{H}^{m}}^2d\tau\right)^{\frac{1}{2}}\\
&\quad+
      C_{m+2}Q(t)^{\frac{1}{2}}
     \left(\int_0^t \|\nabla \Delta d\|_{\mathcal{H}^{m-1}}^2 d\tau\right)^{\frac{1}{2}}
     \left(\int_0^t \|u\|_{\mathcal{H}^{m}}^2d\tau\right)^{\frac{1}{2}}\\
&\le \delta \int_0^t \|\nabla \Delta d\|_{\mathcal{H}^{m-1}}^2 d\tau
     +C_{m+2}C_\delta Q(t)\int_0^t N_m(\tau) d\tau.
\end{aligned}
\end{equation}
Similarly, we also deduce
\begin{equation}\label{3203}
\int_0^t \|\nabla p\|_{\mathcal{H}^{m-1}}\|\eta\|_{\mathcal{H}^{m-1}} d\tau
\le \delta \int_0^t \|\nabla \Delta d\|_{\mathcal{H}^{m-1}}^2 d\tau
     +C_{m+2}C_\delta Q(t)\int_0^t N_m(\tau) d\tau.
\end{equation}
Substituting \eqref{351}, \eqref{351-1}, \eqref{3202} and \eqref{3203}
into \eqref{3201} and choosing $\delta$ small enough, one arrives at
\begin{equation}\label{3204}
\begin{aligned}
&\underset{0\le \tau \le t}{\sup}
(\|(u, \nabla d)\|_{\mathcal{H}^{m}}^2+\|(\nabla u, \Delta d)\|_{\mathcal{H}^{m-1}}^2)
+\varepsilon \int_0^t \|\nabla u\|_{\mathcal{H}^{m}}^2 d\tau\\
&\quad  +\varepsilon \int_0^t \|\nabla^2 u\|_{\mathcal{H}^{m-1}}^2 d\tau
+\int_0^t \|\Delta d\|_{\mathcal{H}^{m}}^2 d\tau
+\int_0^t \|\nabla \Delta d\|_{\mathcal{H}^{m-1}}^2 d\tau\\
&\le C_{m+2}\left\{(\|(u_0, \nabla d_0)\|_{\mathcal{H}^{m}}^2
    +\|(\nabla u_0, \Delta d_0)\|_{\mathcal{H}^{m-1}}^2)
    +C\left(1+P(Q(t))\right)\int_0^t N_m(t)d\tau\right\}.
\end{aligned}
\end{equation}

\subsection{$L^\infty-estimates$}

\quad In this subsection, we shall provide the $L^\infty-$estimates
of $(u, d)$ which are
needed to estimate on the right-hand side of the estimate \eqref{3204}.
\begin{lemm}\label{lemma3.6}
For a smooth solution to \eqref{eq1} and \eqref{bc2}, it holds that
\begin{equation}\label{361}
\|(u, \nabla d)\|_{L^\infty}^2 \le C_3 N_m(t),\quad m\ge 2,
\end{equation}
\begin{equation}\label{362-2}
\|u\|_{\mathcal{H}^{2,\infty}}^2 \le C N_m(t),\quad m\ge 4,
\end{equation}
\begin{equation}\label{362}
\|(u_t, d_t, \nabla d_t)\|_{L^\infty}^2 \le C_3 N_m^3(t),\quad m\ge 3,
\end{equation}
\begin{equation}\label{363}
\|\nabla^2 d\|_{L^\infty}^2 \le C_4 N_m^3(t),\quad m\ge 3,
\end{equation}
\begin{equation}\label{364}
\|\nabla \Delta d\|_{L^\infty}^2 \le C_4 N_m^3(t),\quad m\ge 3.
\end{equation}
\end{lemm}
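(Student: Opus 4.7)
The estimates follow from combining the anisotropic Sobolev embedding of Proposition~\ref{prop2.3}, the conormal elliptic regularity \eqref{3269}, and substitution of the equations \eqref{eq1} to trade high time- or normal-derivatives for quantities manifestly controlled by $N_m(t)$. The working inequality, applied repeatedly, is
\begin{equation*}
\|f\|_{L^\infty}^2 \le C\bigl(\|\nabla f\|_{H^{m_2}_{co}}+\|f\|_{H^{m_2}_{co}}\bigr)\|f\|_{H^{m_1}_{co}}, \qquad m_1+m_2\ge 3.
\end{equation*}

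For \eqref{361} I would apply the embedding with $(m_1,m_2)=(2,1)$ to $u$ and to $\nabla d$. The $u$-part is immediate since $\|u\|_{\mathcal{H}^2}$ and $\|\nabla u\|_{\mathcal{H}^1}$ are contained in $N_m(t)$ for $m\ge 2$; for $\nabla d$ the only non-trivial input is $\|\nabla^2 d\|_{\mathcal{H}^1}$, which \eqref{3269} bounds by $\|\nabla d\|_{\mathcal{H}^1}+\|\Delta d\|_{\mathcal{H}^1}$. The estimate \eqref{362-2} is analogous: apply the embedding with $(m_1,m_2)=(2,2)$ to $\mathcal{Z}^\alpha u$ for each $|\alpha|\le 2$, absorb $\mathcal{Z}^\alpha$ into the $\mathcal{H}^\ast$-norms on $u$ and $\nabla u$, and use $m\ge 4$ so that the resulting norms are contained in $N_m(t)$.

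The remaining estimates \eqref{362}--\eqref{364} combine the embedding with the PDEs. For $u_t$ I would apply the embedding to $\partial_t u$ with $(m_1,m_2)=(2,1)$; since $\|\partial_t f\|_{H^k_{co}}\le \|f\|_{\mathcal{H}^{k+1}}$, the bound reduces to $\|u\|_{\mathcal{H}^3}(\|\nabla u\|_{\mathcal{H}^2}+\|u\|_{\mathcal{H}^2})\le CN_m(t)$ for $m\ge 3$, which is a fortiori dominated by $N_m^3(t)$. The estimates for $d_t$ and $\nabla d_t$ follow the same pattern. For \eqref{363}, I would apply the embedding to $\nabla^2 d$ and use \eqref{3269} to replace $\nabla^k d$-terms by $\Delta d$ and $\nabla d$ norms; the delicate input $\|\nabla\Delta d\|_{H^{m_2}_{co}}$ is handled by substituting $\nabla \Delta d=\nabla d_t+\nabla(u\cdot\nabla d)-\nabla(|\nabla d|^2 d)$ from \eqref{eq1}$_2$ and expanding via Proposition~\ref{prop2.2}, producing the quadratic/cubic products of norms in $N_m(t)$ that account for the $N_m^3$ scaling. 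The estimate \eqref{364} follows the same strategy applied directly to $\nabla\Delta d$.

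The main technical obstacle is bookkeeping the powers of $N_m(t)$: several of these bounds naturally produce products of three factors (one in $L^\infty$ via Proposition~\ref{prop2.3}, two in conormal $L^2$), so one must check that every such product can indeed be estimated by $N_m^3(t)$ without picking up a stray high-order derivative absent from $N_m(t)$. The elliptic regularity \eqref{3269} is the decisive tool for eliminating $\nabla^k d$-terms in favor of $\Delta d$, while the equation \eqref{eq1}$_2$ is the decisive tool for converting high spatial derivatives of $d$ into time derivatives that are reabsorbed into the $\mathcal{H}^\ast$-norms via $\|\partial_t f\|_{H^k_{co}}\le \|f\|_{\mathcal{H}^{k+1}}$.
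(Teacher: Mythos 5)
Your proposal matches the paper's proof in all essentials: anisotropic Sobolev embedding from Proposition~\ref{prop2.3} for \eqref{361} and \eqref{362-2}, substitution of \eqref{eq1}$_2$ to express $d_t$ and $\nabla\Delta d$ in terms of $\Delta d$, $u\cdot\nabla d$ and $|\nabla d|^2 d$ (whence the cubic power of $N_m$), and the conormal elliptic estimate \eqref{3269} to convert $\nabla^2 d$-norms into $\Delta d$-norms. The one point where the paper is more explicit than your sketch of \eqref{363} is that one cannot apply the embedding literally to $f=\nabla^2 d$ (that would require $\|\nabla^3 d\|_{H^{m_2}_{co}}$, which contains an uncontrolled third normal derivative); instead $\nabla^2$ is first expanded in the local coordinates as $\Delta$ plus second-order operators carrying at least one tangential derivative, so that only the $\|\Delta d\|_{L^\infty}$ piece needs the equation-based bound on $\|\nabla\Delta d\|_1$ — which is exactly the "delicate input" you identify.
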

\begin{proof}
By virtue of the Sobolev inequality in Proposition \ref{prop2.3}, one arrives at
\begin{equation}\label{365}
\|u\|_{L^\infty}^2\le C(\|\nabla u\|_{1}^2+\|u\|_2^2)
\end{equation}
and
\begin{equation}\label{366}
\|\nabla d\|_{L^\infty}^2\le C(\|\nabla^2 d\|_{1}^2+\|\nabla d\|_2^2).
\end{equation}
In view of the standard elliptic regularity results with Neumann boundary condition,
we get that
\begin{equation}\label{367}
\|\nabla^2 d\|_{m}^2\le C_{m+2}(\|\Delta d\|_m^2+\|\nabla d\|^2).
\end{equation}
Then, the combination of \eqref{366} and \eqref{367} yields directly
\begin{equation*}
\|\nabla d\|_{L^\infty}^2\le C_3(\|\Delta d\|_{1}^2+\|\nabla d\|_2^2),
\end{equation*}
which, together with \eqref{365}, complete the proof of \eqref{361}.
The estimates \eqref{362-2} follows directly from the application of
Sobolev inequality in Proposition \ref{prop2.3}.
In view of Sobolev inequality in Proposition \ref{prop2.3}, one arrives at
\begin{equation}\label{367-1}
\|u_t\|_{L^\infty}^2 \le C(\|\nabla u_t\|_1^2+\|u_t\|_2^2)
\le C N_m(t), ~{\rm for}~m \ge 3.
\end{equation}
By virtue of the equation \eqref{eq1}$_2$, we find
\begin{equation}\label{367-2}
\begin{aligned}
\|d_t\|_{L^\infty}^2
&\le C(\|\nabla d_t\|_1^2+\|d_t\|_2^2)\\
&\le C(\|\nabla d_t\|_1^2+\|\Delta d\|_2^2+\|u\cdot \nabla d\|_2^2+\||\nabla d|^2 d\|_2^2).
\end{aligned}
\end{equation}
By Proposition \ref{prop2.2} and estimate \eqref{361}, one attains
\begin{equation}\label{367-3}
\|u\cdot \nabla d\|_2^2
\le C(\|u\|_{L^\infty}^2 \|\nabla d\|_2^2+\|\nabla d\|_{L^\infty}^2 \|u\|_2^2)
\le C_3 N_m(t), ~{\rm for}~m \ge 2;
\end{equation}
and
\begin{equation}\label{367-4}
\begin{aligned}
\||\nabla d|^2 d\|_2^2
&\le \sum_{|\gamma|\ge 1,|\beta|+|\gamma|\le 2}
    \int |{Z}^{\beta}(|\nabla d|^2){Z}^\gamma d|^2 dx
     +\||\nabla d|^2\|_2^2\\
&\le \|Z d\|_{L^\infty}^2 \||\nabla d|^2\|_1^2
     +\||\nabla d|^2\|_{L^\infty}^2\|Z d\|_1^2
     +\|\nabla d\|_{L^\infty}^2\|\nabla d\|_2^2\\
&\le C_3 N_2^3(t).
\end{aligned}
\end{equation}
Hence, the combination of \eqref{367-2}-\eqref{367-4} gives directly
\begin{equation}\label{367-5}
\|d_t\|_{L^\infty}^2 \le C_3 N_m^3(t),~{\rm for}~m \ge 3.
\end{equation}
The application of Proposition \ref{prop2.3} and
the standard elliptic regularity results with Neumann boundary condition,
we obtain for $m \ge 3$
\begin{equation}\label{367-6}
\|\nabla d_t\|_{L^\infty}^2
\le C(\|\nabla^2 d_t\|_1^2+\|\nabla d_t\|_2^2)
\le C(\|\Delta d_t\|_1^2+\|\nabla d_t\|_2^2)
\le C(\|\Delta d\|_{\mathcal{H}^{2}}^2+\|\nabla d\|_{\mathcal{H}^{3}}^2).
\end{equation}
Then, the combination of \eqref{367-1}, \eqref{367-5} and \eqref{367-6}
completes the proof of \eqref{362}.
It is easy to deduce that
\begin{equation*}
\begin{aligned}
&\partial_{ii}=\partial_{y_i}^2-\partial_{y_i}(\partial_i \psi \partial_z)
                -\partial_i \psi \partial_z \partial_{y_i}
                +(\partial_i \psi)^2 \partial_z^2, \quad i=1,2,\\
&\partial_1 \partial_2=\partial_{y_1}\partial_{y_2}
               -\partial_{y_2}(\partial_1 \psi \partial_z)
               -\partial_2 \psi \partial_{y_1}\partial_z
               +\partial_2 \psi \partial_1 \psi \partial_z^2,\\
&\partial_i \partial_3=\partial_{y_i}\partial_z-\partial_i \psi \partial_z^2,\quad i=1,2.
\end{aligned}
\end{equation*}
Then, we find that
\begin{equation}\label{368}
\Delta=(1+|\nabla \psi|^2)\partial_{z}^2
        +\sum_{i=1,2}(\partial_{y_i}^2
        -\partial_{y_i}(\partial_i \psi \partial_z)
        -\partial_i \psi \partial_z \partial_{y_i}).
\end{equation}
and
\begin{equation}\label{369}
\begin{aligned}
\nabla^2
&=[(1+|\nabla \psi|^2)+\partial_2 \psi \partial_1 \psi
        -\partial_1 \psi-\partial_2 \psi]\partial_{z}^2
        +\partial_{y_1}\partial_{y_2}\\
&\quad  +\sum_{i=1,2}(\partial_{y_i}^2
        -\partial_{y_i}(\partial_i \psi \partial_z)
        -\partial_i \psi \partial_z \partial_{y_i})
        -\partial_{y_2}(\partial_1 \psi \partial_z)\\
&\quad  -\partial_2 \psi\partial_{y_1}\partial_z
        +\partial_{y_1}\partial_z
        +\partial_{y_2}\partial_z.
\end{aligned}
\end{equation}
The combination of \eqref{368} and \eqref{369} and Proposition \ref{prop2.3} yield that
\begin{equation}\label{3610}
\begin{aligned}
\|\nabla^2 d\|_{L^\infty}^2
&\le C_1(\|\Delta d\|_{L^\infty}^2+\|\partial_z \partial_{y_i} d\|_{L^\infty}^2
      +\|\partial_{y_i}\partial_{y_j}d\|_{L^\infty}^2)\\
&\le C_1(\|\nabla \Delta d\|_1^2+\|\Delta d\|_2^2
      +\|\nabla \partial_z \partial_{y_i} d\|_1^2
       +\|\partial_z \partial_{y_i} d\|_2^2)\\
&\quad   +C(\|\nabla \partial_{y_i}\partial_{y_j}d\|_1^2
       +\|\partial_{y_i}\partial_{y_j}d\|_2^2)\\
&\le C_1(\|\nabla \Delta d\|_1^2+\|\Delta d\|_2^2
      +\|\nabla^2 d\|_2^2+\|\nabla d\|_3^2)\\
&\le C_4(\|\nabla \Delta d\|_1^2+\|\Delta d\|_2^2+\|\nabla d\|_3^2),
\end{aligned}
\end{equation}
where we have used the estimate \eqref{367} in the last inequality.
In order to deal with the first term on the right hand side of \eqref{3610},
we apply the equation \eqref{eq1}$_2$ to attain that
\begin{equation}\label{3611}
\begin{aligned}
\|\nabla \Delta d\|_1^2
&\le \|\nabla(d_t+u\cdot \nabla d-|\nabla d|^2 d)\|_1^2\\
&\le \|\nabla d_t\|_1^2
     +\|\nabla u\cdot \nabla d\|_1^2
     +\|u\cdot \nabla^2 d\|_1^2
     +\|\nabla(|\nabla d|^2 d)\|_1^2.
\end{aligned}
\end{equation}
It is easy to deduce that
\begin{equation}\label{3612}
\|\nabla d_t\|_1^2 \le \|\nabla d\|_{\mathcal{H}^{2}}^2\le N_m(t), ~\text{for}~m\ge2,
\end{equation}
\begin{equation}\label{3613}
\|\nabla u\cdot \nabla d\|_1^2
\le \|(\nabla u, \nabla d)\|_{L^\infty}^2 \|(\nabla u,\nabla d)\|_1^2
\le C_3 N_m^2(t), ~\text{for}~m\ge2,
\end{equation}
and
\begin{equation}\label{3614}
\|u\cdot \nabla^2 d\|_1^2
\le \|u\|_{L^\infty}^2\|\nabla^2 d\|^2
     +\|Z u\|_{L^\infty}^2\|\nabla^2 d\|^2
     +\|u\|_{L^\infty}^2\|\nabla^2 d\|_1^2
\le CN_m^2(t), ~\text{for}~m\ge3.
\end{equation}
In view of the basic fact $|d|=1$(see \eqref{unit}), one arrives at
\begin{equation}\label{3615}
\|\nabla(|\nabla d|^2 d)\|_1^2
\le C N_m^3(t), ~\text{for}~m\ge3.
\end{equation}
Substituting \eqref{3612}-\eqref{3615} into \eqref{3611}, we find
\begin{equation*}
\|\nabla \Delta d\|_1^2 \le C_3 N_m^3(t), ~~m\ge3,
\end{equation*}
which, together with \eqref{3610}, completes the proof of \eqref{363}.
By virtue of the \eqref{eq1}$_2$, \eqref{361} and \eqref{363},
one attains for $m \ge 3$
\begin{equation*}
\begin{aligned}
\|\nabla \Delta d\|_{L^\infty}^2
&\le \|\nabla(d_t+u\cdot \nabla d-|\nabla d|^2 d)\|_{L^\infty}^2\\
&\le \|\nabla d_t\|_{L^\infty}^2
      +\|\nabla u\|_{L^\infty}^2\|\nabla d\|_{L^\infty}^2
      +\|u\|_{L^\infty}^2\|\nabla^2 d\|_{L^\infty}^2\\
&\quad  +\|\nabla d\|_{L^\infty}^2\|\nabla^2 d\|_{L^\infty}^2
      +\|\nabla d\|_{L^\infty}^4\|\nabla d\|_{L^\infty}^2\\
&\le \|\nabla d_t\|_{L^\infty}^2+C_4 N_m^3(t),
\end{aligned}
\end{equation*}
which, together with \eqref{367-6}, completes the proof of estimate \eqref{364}.
\end{proof}

In order to give the estimate for $\|\nabla u\|_{\mathcal{H}^{1,\infty}}$,
we need the lemma as follows, refer to \cite{Masmoudi-Rousset} or \cite{Wang-Xin-Yong}.
\begin{lemm}\label{lemma3.7}
Consider $\rho$ a smooth solution of
\begin{equation}\label{371}
\partial_t \rho+u\cdot \nabla \rho=\varepsilon \partial_{zz}\rho+\mathcal{S},
\quad z >0,\quad \rho(t,y,0)=0
\end{equation}
for some smooth divergence free vector field $u$ such that $u\cdot n$
vanishes on the boundary. Assume that $\rho$ and $\mathcal{S}$ are
compact supported in $z$. Then, we have the estimate:
\begin{equation}\label{372}
\|\rho(t)\|_{\mathcal{H}^{1,\infty}}
\le C\|\rho_0\|_{\mathcal{H}^{1,\infty}}
+C\int_0^t ((\|u\|_{\mathcal{H}^{2,\infty}}+\|\partial_z u\|_{\mathcal{H}^{1,\infty}})
          (\|\rho\|_{\mathcal{H}^{1,\infty}}+\|\rho\|_{\mathcal{H}^{m_0+3}})
          +\|\mathcal{S}\|_{\mathcal{H}^{1,\infty}})d\tau
\end{equation}
for $m_0 \ge 2$.
\end{lemm}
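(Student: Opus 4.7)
The plan is to use the Duhamel representation via the Dirichlet heat semigroup in the normal direction. Writing the equation as $\partial_t\rho - \varepsilon\partial_{zz}\rho = \mathcal{S} - u\cdot\nabla\rho$ and using the heat semigroup $e^{\varepsilon t\partial_{zz}}$ on $z>0$ with homogeneous Dirichlet condition, whose Green's function is the odd reflection
\begin{equation*}
G(t,z,z') = \frac{1}{\sqrt{4\pi\varepsilon t}}\left(e^{-(z-z')^2/(4\varepsilon t)} - e^{-(z+z')^2/(4\varepsilon t)}\right),
\end{equation*}
one obtains
\begin{equation*}
\rho(t) = e^{\varepsilon t\partial_{zz}}\rho_0 + \int_0^t e^{\varepsilon(t-s)\partial_{zz}}(\mathcal{S} - u\cdot\nabla\rho)(s)\,ds.
\end{equation*}
The crucial property is the $L^\infty$ contraction $\|e^{\varepsilon t\partial_{zz}}f\|_{L^\infty}\le\|f\|_{L^\infty}$, which I will use as the basic $L^\infty$ bound before promoting it to the conormal norm.

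Next I would apply each first-order conormal field $\mathcal{Z}^\alpha$ with $|\alpha|\le 1$ to the representation. The tangential generators $Z_i = \partial_{y^i}$ ($i=1,2$) commute with $\partial_{zz}$, since the coefficient $\partial_i\psi$ is independent of $z$, and they preserve the Dirichlet boundary condition because $\rho|_{z=0}=0$ already forces $Z_i\rho|_{z=0}=0$ in the $(y,z)$-parameterization. Hence $Z_i\rho$ solves the same type of equation with source $Z_i\mathcal{S}-[Z_i,u\cdot\nabla]\rho$, and the Duhamel estimate applies verbatim. For the normal conormal $Z_3 = \varphi(z)\partial_z$, we have $Z_3\rho|_{z=0}=0$ because $\varphi(0)=0$, and the commutator $[Z_3,\partial_{zz}] = -2\varphi'(z)\partial_{zz}-\varphi''(z)\partial_z$ is controllable: the $\partial_z\rho$ piece is a lower-order term absorbable into $\|\rho\|_{\mathcal{H}^{m_0+3}}$ via the anisotropic Sobolev embedding of Proposition~\ref{prop2.3}, while the $\partial_{zz}\rho$ piece is handled by integrating by parts in the Duhamel kernel, using $\varphi(0)=0$ so that no boundary contribution appears.

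The main obstacle will be bounding $\|\mathcal{Z}^\alpha(u\cdot\nabla\rho)\|_{L^\infty}$ by the stated right-hand side. Expanding by Leibniz, the delicate case is when the gradient hits $\rho$ in the normal direction, producing $\partial_z\rho$, which is not itself a conormal quantity near the boundary. To handle this, I will decompose $u\cdot\nabla$ in the local basis as $u^1\partial_{y^1}+u^2\partial_{y^2}+(u\cdot n)\partial_z$, and use $u\cdot n|_{z=0}=0$ together with the Hardy inequality $\|(u\cdot n)/\varphi(z)\|_{L^\infty}\le C\|\partial_z u\|_{L^\infty}$ to rewrite $(u\cdot n)\partial_z = ((u\cdot n)/\varphi(z))\,Z_3$, which is now a genuine conormal operation. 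The same trick applied to $\mathcal{Z}^\beta u$ with $|\beta|=1$ acting in the normal direction yields exactly the prefactor $\|u\|_{\mathcal{H}^{2,\infty}}+\|\partial_z u\|_{\mathcal{H}^{1,\infty}}$.

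Finally, any residual high-order pieces of $\rho$ (second normal derivatives arising from the commutators $[Z_3,\partial_{zz}]$ and from differentiating $u\cdot\nabla\rho$) are dominated in $L^\infty$ by $\|\rho\|_{\mathcal{H}^{m_0+3}}$ for $m_0\ge 2$ through the anisotropic Sobolev inequality \eqref{24}. Assembling the ingredients, applying the $L^\infty$-contraction of the Dirichlet heat semigroup to the Duhamel integrals for $\rho$ and for each $\mathcal{Z}^\alpha\rho$ with $|\alpha|\le 1$, and integrating in time yields the estimate \eqref{372}.
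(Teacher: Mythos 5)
The paper does not actually prove Lemma \ref{lemma3.7}: it is quoted from \cite{Masmoudi-Rousset} (see also \cite{Wang-Xin-Yong}), and the proof there does rest on the odd-reflection Dirichlet heat kernel, Duhamel's formula and the Hardy inequality, so your overall framework is the right one. Nevertheless your sketch has two genuine gaps, and they occur exactly at the points where the lemma is hard.

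First, you treat the full convection term $u\cdot\nabla\rho$ (and, after differentiating, $u\cdot\nabla\mathcal{Z}^\alpha\rho$) as a Duhamel source for the pure heat semigroup $e^{\varepsilon t\partial_{zz}}$. When $\mathcal{Z}^\alpha$ with $|\alpha|=1$ falls entirely on $\rho$ in $\mathcal{Z}^\alpha(u\cdot\nabla\rho)$, you are left with $u^i\partial_{y^i}\mathcal{Z}^\alpha\rho$ and, after your Hardy rewriting, $\tfrac{u\cdot n}{\varphi}Z_3\mathcal{Z}^\alpha\rho$: these are second-order conormal derivatives of $\rho$ in $L^\infty$, i.e.\ a $\|\rho\|_{\mathcal{H}^{2,\infty}}$ contribution, which does not appear on the right-hand side of \eqref{372} and is not dominated by $\|\rho\|_{\mathcal{H}^{m_0+3}}$ through Proposition \ref{prop2.3}: the embedding \eqref{24} requires $\|\nabla f\|_{H^{m_2}_{co}}$, and that full gradient contains a normal derivative, so it is not a conormal quantity of $\rho$. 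The cited proof avoids this by keeping the transport term inside the operator: apply $\mathcal{Z}^\alpha$ to the equation, use the $L^\infty$-contraction (maximum principle) of the evolution operator associated with $\partial_t+u\cdot\nabla-\varepsilon\partial_{zz}$ with Dirichlet data, so that only the commutator $[\mathcal{Z}^\alpha,u\cdot\nabla]\rho$ --- which involves at most one conormal derivative of $\rho$ --- enters as a source.

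Second, your treatment of $\varepsilon[Z_3,\partial_{zz}]\rho=-2\varepsilon\varphi'\partial_{zz}\rho-\varepsilon\varphi''\partial_z\rho$ does not close. The piece $\varepsilon\varphi''\partial_z\rho$ cannot be ``absorbed into $\|\rho\|_{\mathcal{H}^{m_0+3}}$ via the anisotropic Sobolev embedding'': that norm contains no normal derivatives, and $\|\partial_z\rho\|_{L^\infty}$ near the boundary is precisely the quantity the conormal machinery is designed not to require. For the piece $\varepsilon\varphi'\partial_{zz}\rho$, integrating by parts in the Duhamel kernel either places $\partial_{z'}^2G$ in the kernel, whose $L^1_{z'}$-norm is of order $(\varepsilon(t-s))^{-1}$ and hence not time-integrable even after multiplying by $\varepsilon$, or leaves one $\partial_{z'}$ on $\rho$, reproducing the uncontrolled normal derivative. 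The missing ingredients are (i) the substitution $\varepsilon\partial_{zz}\rho=\partial_t\rho+u\cdot\nabla\rho-\mathcal{S}$ from the equation itself, all of whose terms are conormally controlled (the normal part of $u\cdot\nabla\rho$ again via Hardy), and (ii) the uniform-in-$\varepsilon,\tau$ kernel estimate $\bigl\|\varphi(z)\partial_z\int_0^\infty G(\tau,z,z')f(z')\,dz'\bigr\|_{L^\infty}\lesssim\|f\|_{L^\infty}+\|Z_3f\|_{L^\infty}$, obtained by writing $\partial_zG=-\partial_{z'}G-2k'(z+z')$, integrating by parts in $z'$, and using $\varphi(z)\le\varphi(z')+C|z-z'|$ together with parabolic scaling to absorb the factors $|z-z'|$ and $z+z'$. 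Without these two points the argument does not yield \eqref{372}.
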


Finally, one gives the estimate for the quantity $\|\nabla u\|_{\mathcal{H}^{1,\infty}}$.
\begin{lemm}\label{lemma3.8}
For $m \ge 6$, we have the estimate
\begin{equation}\label{381}
\|\nabla u\|_{\mathcal{H}^{1,\infty}}^2
\le C_{m+2} \left\{N_m(0)+ \|u\|_{\mathcal{H}^{m}}^2
     +\delta \varepsilon \int_0^t \|\nabla^2 u\|_{\mathcal{H}^4}^2 d\tau
    +C_\delta \int_0^t N_m^4(\tau)d\tau\right\}.
\end{equation}
\end{lemm}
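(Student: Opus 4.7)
The plan is to reduce the control of $\|\nabla u\|_{\mathcal{H}^{1,\infty}}$ to that of the equivalent quantity $\eta=\chi(w\times n-\Pi(Bu))$, which vanishes on $\partial\Omega$, and then apply the $L^\infty$ parabolic estimate of Lemma \ref{lemma3.7}. Concretely, working in a local chart $(y,z)$ near the boundary, the tangential derivatives $Z_i u$ ($i=1,2$) are immediately controlled by $\|u\|_{\mathcal{H}^{2,\infty}}$, and hence by $N_m(t)^{1/2}$ via the Sobolev estimate \eqref{362-2} in Lemma \ref{lemma3.6} as long as $m\ge 4$. The normal component $\partial_n u\cdot n$ is controlled by $\|u\|_{\mathcal{H}^{2,\infty}}$ through the divergence-free identity \eqref{301}. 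Thus everything reduces to estimating $\Pi(\partial_n u)$, and \eqref{306}--\eqref{307} show that, modulo a harmless $\|u\|_{\mathcal{H}^{m}}$ term, it suffices to bound $\|\eta\|_{\mathcal{H}^{1,\infty}}$.

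To bound $\|\eta\|_{\mathcal{H}^{1,\infty}}$ I will use the transport--diffusion equation \eqref{333} for $\eta$. In local coordinates the Laplacian decomposes as $\Delta=(1+|\nabla\psi|^2)\partial_z^2+\mathcal{T}$, where $\mathcal{T}$ collects tangential and mixed second-order operators; absorbing the smooth coefficient $(1+|\nabla\psi|^2)$ into the normal dissipation by a change of variable in $z$ puts \eqref{333} in the form
\begin{equation*}
\eta_t+u\cdot\nabla\eta-\varepsilon\partial_{zz}\eta=\mathcal{S},\qquad \eta|_{z=0}=0,
\end{equation*}
with $\mathcal{S}=\varepsilon\mathcal{T}\eta+\chi[F_1\times n+\Pi(BF_2)]+\chi F_3+F_4+\varepsilon\Delta(\Pi B)\cdot u$. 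Applying Lemma \ref{lemma3.7} with $m_0=3$ and using \eqref{308} to bound $\|\eta\|_{\mathcal{H}^{6}}$ by $\|\nabla u\|_{\mathcal{H}^{m-1}}+\|u\|_{\mathcal{H}^{m}}\lesssim N_m(t)^{1/2}$ (here the condition $m\ge 6$ is first invoked) reduces matters to controlling $\int_0^t\|\mathcal{S}\|_{\mathcal{H}^{1,\infty}}\,d\tau$.

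For the forcing terms $F_1\times n$, $\Pi(BF_2)$, $F_3$ and $F_4$ each factor is estimated pointwise using Lemma \ref{lemma3.6}: the $L^\infty$ norms of $u$, $\nabla d$, $\nabla^2 d$, $\nabla\Delta d$ and $u_t$ are all polynomial in $N_m$ once $m\ge 3$, so their contributions fold into $C_\delta\int_0^t N_m^4(\tau)\,d\tau$. The $\varepsilon$-weighted pieces, namely the $\varepsilon\partial_i w$ terms inside $F_3$, the tangential remainder $\varepsilon\mathcal{T}\eta$, and $\varepsilon\Delta(\Pi B)\cdot u$, are absorbed by Cauchy--Schwarz and Proposition \ref{prop2.3} into the reservoir $\delta\varepsilon\int_0^t\|\nabla^2 u\|_{\mathcal{H}^{4}}^2\,d\tau$; this is where the threshold $m\ge 6$ becomes essential, so that the Sobolev embedding $\mathcal{H}^{5}\hookrightarrow\mathcal{H}^{2,\infty}$ (via \eqref{24} with $m_1+m_2\ge 3$) leaves enough room.

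The main obstacle is the pressure contribution sitting inside $F_2$. Lemma \ref{lemma3.5} only provides $L^2$-type bounds on $\nabla p_1$ and $\nabla p_2$, whereas here $\nabla p$ must be controlled in $\mathcal{H}^{1,\infty}$. I plan to revisit the elliptic systems \eqref{3215} and \eqref{3216} separately, running $W^{k,\infty}$ elliptic regularity (through Sobolev embedding at high enough order, which the assumption $m\ge 6$ just affords) to convert the right-hand side of the equation for $p_1$ into $L^\infty$ products of $u$, $\nabla u$, $\nabla d$ and $\Delta d$, each controlled by a power of $N_m(t)$ via Lemma \ref{lemma3.6}. For the Navier part $p_2$, the boundary datum $\partial_n p_2=\varepsilon\Delta u\cdot n$ produces an explicit $\varepsilon$ factor, so after Young's inequality that piece too can be absorbed into the $\delta\varepsilon\int_0^t\|\nabla^2 u\|_{\mathcal{H}^{4}}^2\,d\tau$ term. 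Reassembling the bound on $\|\eta\|_{\mathcal{H}^{1,\infty}}$ with the reduction of the first paragraph and squaring yields precisely \eqref{381}.
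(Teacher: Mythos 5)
Your overall strategy (reduce to the Dirichlet quantity $\eta$, view its equation as a convection--diffusion problem with only normal dissipation, and invoke Lemma \ref{lemma3.7}) is the paper's strategy, and your treatment of the reduction steps, of the forcing terms $F_1,F_3,F_4$, and of the pressure (via elliptic regularity plus Sobolev embedding, with the $\varepsilon$ in the Neumann datum of $p_2$ absorbed by Young) is consistent with what the paper does. But there is a genuine gap in the coordinate system you choose. In the chart $\Psi(y,z)=(y,\psi(y)+z)$ the Laplacian is, by \eqref{368}, $(1+|\nabla\psi|^2)\partial_z^2+\sum_i\bigl(\partial_{y_i}^2-\partial_{y_i}(\partial_i\psi\,\partial_z)-\partial_i\psi\,\partial_z\partial_{y_i}\bigr)$, so your remainder $\mathcal{T}$ contains the mixed operators $\partial_z\partial_{y_i}$ (and your rescaling of $z$ also generates a first-order $\partial_z$ term). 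Dumping $\varepsilon\,\partial_z\partial_{y_i}\eta$ into the source $\mathcal{S}$ of Lemma \ref{lemma3.7} is fatal: one then needs $\|\varepsilon\,\partial_z\partial_{y_i}\eta\|_{\mathcal{H}^{1,\infty}}\sim\varepsilon\|\nabla^2 u\|_{\mathcal{H}^{2,\infty}}$, and the anisotropic embedding \eqref{24} converts this into a bound requiring $\nabla^3 u$ in conormal $L^2$, which is controlled nowhere in the scheme (the dissipation only furnishes $\varepsilon\int_0^t\|\nabla^2 u\|_{\mathcal{H}^{m-1}}^2$). The same objection applies to a leftover first-order term $\varepsilon\,a(y,z)\partial_z\eta$, which costs $\varepsilon\|\nabla^2u\|_{\mathcal{H}^{1,\infty}}$. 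By contrast, purely tangential second-order pieces $\varepsilon Z^2\nabla u$ cost only $\varepsilon^2\|\nabla u\|_{\mathcal{H}^{3,\infty}}^2\lesssim \varepsilon^2\|\nabla^2u\|_{\mathcal{H}^4}\|\nabla u\|_{\mathcal{H}^{m-1}}+\cdots$, which Young's inequality does put into the reservoir $\delta\varepsilon\int_0^t\|\nabla^2u\|_{\mathcal{H}^4}^2\,d\tau$.

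This is exactly why the paper switches to the normal geodesic parametrization $\Psi^n(y,z)=(y,\psi(y))^t-zn(y)$: there the metric is block diagonal, $\partial_z=\partial_n$, and by \eqref{383} the Laplacian is $\partial_{zz}+\tfrac12\partial_z(\ln|g|)\partial_z+\Delta_{\widetilde g}$ with $\Delta_{\widetilde g}$ involving \emph{only} tangential derivatives --- no mixed terms ever appear. The surviving first-order normal term $\tfrac{\varepsilon}{2}\partial_z(\ln|g|)\partial_z\widetilde\eta$ is then eliminated not by estimation but by the conjugation $\widetilde\eta=|g|^{-1/4}\overline\eta$, after which $\overline\eta$ solves precisely the model equation of Lemma \ref{lemma3.7}. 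These two devices --- the geodesic coordinates and the $|g|^{-1/4}$ change of unknown --- are the essential ingredients your proposal is missing; without them the argument does not close within the norms $N_m$ and $\varepsilon\int_0^t\|\nabla^2u\|_{\mathcal{H}^4}^2\,d\tau$ available on the right-hand side of \eqref{381}.
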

\begin{proof}
Away from the boundary, we clearly have by the classical isotropic Sobolev
embedding that
\begin{equation}\label{382}
\|\chi \nabla u\|_{\mathcal{H}^{1,\infty}}\lesssim \|u\|_{\mathcal{H}^{m}},
\quad m \ge 4,
\end{equation}
where the support of $\chi$ is away from the boundary.
Consequently, by using a partition of unity subordinated to the covering
we only have to estimate $\|\chi_j \nabla u\|_{\mathcal{H}^{1,\infty}}, ~j\ge 1$.
For notational convenience, we shall denote $\chi_j$ by $\chi$.
Similar to \cite{Masmoudi-Rousset}, we use the local parametrization in the
neighborhood of the boundary given by a normal geodesic system in which the
Laplacian takes a convenient form. Denote
\begin{equation*}
\Psi^n(y, z)=
\left(
\begin{array}{c}
y\\
\psi(y)
\end{array}
\right)
-zn(y)=x,
\end{equation*}
where
\begin{equation*}
n(y)=\frac{1}{\sqrt{1+|\nabla \psi(y)|^2}}
\left(
\begin{array}{c}
\partial_1 \psi(y)\\
\partial_2 \psi(y)\\
-1
\end{array}
\right)
\end{equation*}
is the unit outward normal. As before, one can extend $n$ and $\Pi$
in the interior by setting
\begin{equation*}
n(\Psi^n(y,z))=n(y), \quad \Pi(\Psi^n(y,z))=\Pi(y)=I-n\otimes n,
\end{equation*}
where $I$ is the unit matrix.
Note that $n(y,z)$ and $\Pi(y,z)$ have different definitions from
the ones used before.
The advantages of this parametrization is that in the associated local
basis $(e_{y_1}, e_{y_2}, e_z)$ of $\mathbb{R}^3$, it holds that
$\partial_z=\partial_n$ and
\begin{equation*}
\left.(e_{y_i})\right|_{\Psi^n(y,z)}
\cdot \left.(e_z)\right|_{\Psi^n(y,z)}=0,\quad i=1,2.
\end{equation*}
The scalar product on $\mathbb{R}^3$ induces in this coordinate system
the Riemannian metric $g$ with the norm
\begin{equation*}
g(y,z)=
\left(
\begin{array}{cc}
\widetilde{g}(y,z)& 0\\
0 & 1
\end{array}
\right).
\end{equation*}
Therefore, the Laplacian in this coordinate system has the form
\begin{equation}\label{383}
\Delta f=\partial_{zz}f+\frac{1}{2}\partial_z(\ln |g|)\partial_z f+\Delta_{\widetilde{g}}f,
\end{equation}
where $|g|$ denotes the determinant of the matrix $g$,
and $\Delta_{\widetilde{g}}$ is defined by
\begin{equation*}
\Delta_{\widetilde{g}}f
=\frac{1}{\sqrt|\widetilde{g}|}\sum_{i,j=1,2}
\partial_{y_i}(\widetilde{g}^{ij}|\widetilde{g}|^{\frac{1}{2}}\partial_{y_j}f),
\end{equation*}
which  only involves the tangential derivatives and
$\{\widetilde{g}^{ij}\}$ is the inverse matrix to $g$.

Next, thanks to \eqref{301}(in the coordinate system that
we have just defined) and Lemma \ref{lemma3.6},
we have for $m \ge 4$
\begin{equation}\label{384}
\begin{aligned}
\|\chi \nabla u\|_{\mathcal{H}^{1,\infty}}^2
&\le C_3(\|\chi \Pi \partial_n u\|_{\mathcal{H}^{1,\infty}}^2
    +\|u\|_{\mathcal{H}^{2,\infty}}^2)\\
&\le C_3(\|\chi \Pi \partial_n u\|_{\mathcal{H}^{1,\infty}}^2+N_m(t)).
\end{aligned}
\end{equation}
Consequently, it suffices to estimate $\|\chi \Pi \partial_n u\|_{\mathcal{H}^{1,\infty}}$.
To this end, it is useful to use the vorticity $w=\nabla \times u$,
see \cite{{Masmoudi-Rousset},{Xiao-Xin}, {Wang-Xin-Yong}}.
Indeed, it is easy to deduce that
\begin{equation*}
\Pi(w\times n)=\Pi((\nabla u-\nabla u^t)\cdot n)
=\Pi(\partial_n u-\nabla(u\cdot n)+\nabla n^t \cdot u),
\end{equation*}
which implies
\begin{equation}\label{385}
\begin{aligned}
\|\chi \Pi \partial_n u\|_{\mathcal{H}^{1,\infty}}^2
&\le C_3(\|\chi \Pi(w\times n)\|_{\mathcal{H}^{1,\infty}}^2+\|u\|_{\mathcal{H}^{2,\infty}}^2)\\
&\le C_3(\|\chi \Pi(w\times n)\|_{\mathcal{H}^{1,\infty}}^2+N_m(t)),
\end{aligned}
\end{equation}
where we have used the Lemma \ref{lemma3.6} in the last inequality.
In other words, we only need to estimate
$\|\chi \Pi(w\times n)\|_{\mathcal{H}^{1,\infty}}$.
It is easy to see that $w$ solves the vorticity equation
\begin{equation}\label{386}
w_t+(u\cdot \nabla)w-\varepsilon \Delta w=F_1,
\end{equation}
where $F_1\triangleq w\cdot \nabla u-\nabla \times(\nabla d\cdot \Delta d)$.
In the support of $\chi$, let
\begin{equation*}
\widetilde{w}(y, z)=w(\Psi^n(y,z)),
\quad \widetilde{u}(y, z)=u(\Psi^n(y,z)),
\widetilde{d}(y, z)=d(\Psi^n(y,z)),
\end{equation*}
The combination of \eqref{383} and \eqref{386} yields directly
\begin{equation}\label{387}
\partial_t {\widetilde{w}}+\widetilde{u}^1\partial_{y_1}{\widetilde{w}}
+\widetilde{u}^2\partial_{y_2}{\widetilde{w}}
+\widetilde{u}\cdot n \partial_z {\widetilde{w}}
=\varepsilon(\partial_{zz}{\widetilde{w}}
+\frac{1}{2}\partial_z (\ln |g|)\partial_z {\widetilde{w}}
+\Delta_{\widetilde{g}} {\widetilde{w}})
+\widetilde{F}_1
\end{equation}
and
\begin{equation}\label{388}
\begin{aligned}
\partial_t {\widetilde{u}}+\widetilde{u}^1\partial_{y_1}{\widetilde{u}}
+\widetilde{u}^2\partial_{y_2}{\widetilde{u}}
+\widetilde{u}\cdot n \partial_z {\widetilde{u}}
=\varepsilon(\partial_{zz}{\widetilde{u}}
+\frac{1}{2}\partial_z (\ln |g|)\partial_z {\widetilde{u}}
+\Delta_{\widetilde{g}} {\widetilde{u}})
+\widetilde{F}_2,
\end{aligned}
\end{equation}
where $\widetilde{F}_2 =F_2(\Psi^n(y,z))$
and $F_2\triangleq-\nabla p-\nabla d\cdot \Delta d$.
Similar to \eqref{305} , we define
\begin{equation*}\label{389}
\widetilde{\eta}=\chi(\widetilde{w}\times n+\Pi(B\widetilde{u})).
\end{equation*}
It is easy to deduce taht $\widetilde{\eta}$ satisfies
\begin{equation*}\label{3810}
\widetilde{\eta}(y, 0)=0.
\end{equation*}
and solves the equation
\begin{equation}\label{3811}
\begin{aligned}
&\partial_t \widetilde{\eta}+\widetilde{u}^1\partial_{y_1}\widetilde{\eta}
+\widetilde{u}^2\partial_{y_2}\widetilde{\eta}
+\widetilde{u}\cdot n \partial_z \widetilde{\eta}\\
&=\varepsilon\left(\partial_{zz}\widetilde{\eta}
+\frac{1}{2}\partial_z(\ln |g|)\partial_z \widetilde{\eta}\right)
+\chi (\widetilde{F}_1 \times n)
+\chi \Pi(B \widetilde{F}_2)+F^\chi+\chi F^{\kappa},
\end{aligned}
\end{equation}
where the source terms are given by
\begin{equation}\label{3812}
\begin{aligned}
F^\chi
&=\left[(\widetilde{u}^1\partial_{y_1}
+\widetilde{u}^2\partial_{y_2}
+\widetilde{u}\cdot n \partial_z)\chi\right]
(\widetilde{w}\times n+\Pi(Bu))\\
&-\varepsilon\left(\partial_{zz}\chi+2 \partial_z \chi \partial_z
      +\frac{1}{2}\partial_z(\ln |g|)\partial_z \chi\right)
      (\widetilde{w}\times n+\Pi(Bu)),
\end{aligned}
\end{equation}
and
\begin{equation}\label{3813}
\begin{aligned}
F^\kappa
=&(\widetilde{u}^1 \partial_{y_1}\Pi+\widetilde{u}^2 \partial_{y_2}\Pi)\cdot (B\widetilde{u})
+w\times (\widetilde{u}^1 \partial_{y_1}n+\widetilde{u}^2 \partial_{y_2}n)\\
&+\Pi\left[(\widetilde{u}^1 \partial_{y_1}+\widetilde{u}^2 \partial_{y_2}
            +u\cdot n \partial_{y_3})B\cdot u\right]
            +\varepsilon \Delta_{\widetilde{g}}\widetilde{w} \times n
            +\varepsilon \Pi (B \Delta_{\widetilde{g}}\widetilde{u}).
\end{aligned}
\end{equation}
Note that in the derivation of the source terms above, in particular,
$F^\kappa$, which contains all the commutators coming from the fact that
$n$ and $\Pi$ are not constant, we have used the fact that in the coordinate system
just defined, $n$ and $\Pi$ do not depend on the normal variable.
Since $\Delta_{\widetilde{g}}$ involves only the tangential derivatives,
and the derivatives of $\chi$ are compactly supported away from the boundary,
the following estimates hold for $m \ge 6$
\begin{equation}\label{3814}
\| F^\chi\|_{\mathcal{H}^{1,\infty}}^2
\le C_3(\|u\|_{\mathcal{H}^{1,\infty}}^2\|u\|_{\mathcal{H}^{2,\infty}}^2
+\varepsilon \|u\|_{\mathcal{H}^{3,\infty}}^2)
\le C_3 N^2_m(t),
\end{equation}
\begin{equation}\label{3815}
\|\chi (\widetilde{F}^1 \times n)\|_{\mathcal{H}^{1,\infty}}^2
\le C_2(\|\nabla u\|_{\mathcal{H}^{1,\infty}}^2
+\|\nabla d\|_{\mathcal{H}^{1,\infty}}^2\|\nabla \Delta d\|_{\mathcal{H}^{1,\infty}}^2)
\le C_2 N^4_m(t),
\end{equation}
\begin{equation}\label{3816}
\|\chi \Pi(B \widetilde{F}_2)\|_{\mathcal{H}^{1,\infty}}^2
\le C_3(\|\nabla p\|_{\mathcal{H}^{1,\infty}}^2
+\|\nabla d\|_{\mathcal{H}^{1,\infty}}^2\|\Delta d\|_{\mathcal{H}^{1,\infty}}^2)
\le C_3 N^3_m(t),
\end{equation}
and
\begin{equation}\label{3817}
\begin{aligned}
\|\chi F^\kappa\|_{\mathcal{H}^{1,\infty}}^2
&\le C_4\{\|u\|_{\mathcal{H}^{1,\infty}}^2\|\nabla u\|_{\mathcal{H}^{1,\infty}}^2
+\varepsilon^2(\|\nabla u\|_{\mathcal{H}^{3,\infty}}^2+\|u\|_{\mathcal{H}^{3,\infty}}^2)\}\\
&\le \delta \varepsilon \|\nabla^2 u\|_{\mathcal{H}^4}^2
+C_4\left\{\delta \varepsilon^3 N_m(t)+N^2_m(t)\right\}.
\end{aligned}
\end{equation}
It follows from \eqref{3812}-\eqref{3817} that
\begin{equation}\label{3818}
\|F\|_{\mathcal{H}^{1,\infty}}^2
\le \delta \varepsilon \|\nabla^2 u\|_{\mathcal{H}^4}^2
+C_4 \left\{C_\delta \varepsilon^3 N_m(t)+N^4_m(t)\right\},\quad m\ge 6,
\end{equation}
where $F\triangleq\chi (\widetilde{F}_1 \times n)
+\chi \Pi(B \widetilde{F}_2)+F^\chi+\chi F^{\kappa}$.
In order to be able to use Lemma \ref{lemma3.7}, we shall perform last change of
unknown in order to eliminate the term
$\partial_z(\ln |\widetilde{g}|)\partial_z \widetilde{\eta}$.
We set
$$
\widetilde{\eta}=\frac{1}{|g|^{\frac{1}{4}}}\overline{\eta}=\overline{\gamma} ~\overline{\eta}.
$$
Note that we have
\begin{equation}\label{3819}
\|\widetilde{\eta}\|_{\mathcal{H}^{1,\infty}}
\le C_3 \|\overline{\eta}\|_{\mathcal{H}^{1,\infty}},
\quad \|\overline{\eta}\|_{\mathcal{H}^{1,\infty}}
\le C_3 \|\widetilde{\eta}\|_{\mathcal{H}^{1,\infty}}
\end{equation}
and that, $\overline{\eta}$ solves the equation
\begin{equation*}
\begin{aligned}
&\partial_t \overline{\eta}+\widetilde{u}^1\partial_{y_1}\overline{\eta}
+\widetilde{u}^2\partial_{y_2}\overline{\eta}
+\widetilde{u}\cdot n \partial_z \overline{\eta}
-\varepsilon \partial_{zz}\overline{\eta}\\
&=\frac{1}{\overline{\gamma}}\left(\widetilde{F}
+\varepsilon\partial_{zz}\overline{\gamma}\cdot \overline{\eta}
+\frac{\varepsilon}{2}\partial_z(\ln |g|)\partial_z \overline{\gamma}\cdot \overline{\eta}
-(\widetilde{u}\cdot \nabla \overline{\gamma} )\overline{\eta}\right)
:=\mathcal{S}.
\end{aligned}
\end{equation*}
Hence, it is easy to deduce for $m \ge 6$
\begin{equation}\label{3820}
\begin{aligned}
\|\mathcal{S}\|^2_{\mathcal{H}^{1,\infty}}
&\le C_4(\|F\|^2_{\mathcal{H}^{1,\infty}}
+\|\overline{\eta}\|^2_{\mathcal{H}^{1,\infty}}
+\|\widetilde{u}\|^2_{\mathcal{H}^{1,\infty}}\|\overline{\eta}\|^2_{\mathcal{H}^{1,\infty}})\\
&\le C_4\left\{\delta \varepsilon \|\nabla^2 u\|_{\mathcal{H}^4}^2
+C_\delta \varepsilon^3 N_m(t)+N^4_m(t)\right\}.
\end{aligned}
\end{equation}
Consequently, by using Lemma \ref{lemma3.7}, we get that for $m \ge 6$
\begin{equation}\label{3821}
\begin{aligned}
\|\overline{\eta}\|_{\mathcal{H}^{1,\infty}}
&\lesssim \|\overline{\eta}_0\|_{\mathcal{H}^{1,\infty}}
+\!\!\int_0^t\!\! ((\|\widetilde{u}\|_{\mathcal{H}^{2,\infty}}
+\|\partial_z \widetilde{u}\|_{\mathcal{H}^{1,\infty}})
          (\|\overline{\eta}\|_{\mathcal{H}^{1,\infty}}
          \!+\!\|\overline{\eta}\|_{\mathcal{H}^{m_0+3}})
          \!+\!\|\mathcal{S}\|_{\mathcal{H}^{1,\infty}})d\tau\\
&\lesssim \|\widetilde{{\eta}}_0\|_{\mathcal{H}^{1,\infty}}
+\!\!\int_0^t\!\! ((\|u\|_{\mathcal{H}^{2,\infty}}+\|\nabla u\|_{\mathcal{H}^{1,\infty}})
          (\|\widetilde{{\eta}}\|_{\mathcal{H}^{1,\infty}}
          \!+\!\|\widetilde{{\eta}}\|_{H^{m_0+3}})
          \!+\!\|\mathcal{S}\|_{\mathcal{H}^{1,\infty}})d\tau\\
&\lesssim \|\widetilde{{\eta}}_0\|_{\mathcal{H}^{1,\infty}}
+\int_0^t (N_m(\tau)+\|\mathcal{S}\|_{\mathcal{H}^{1,\infty}})d\tau.
\end{aligned}
\end{equation}
Then, we deduce from  \eqref{3818}-\eqref{3821} that
\begin{equation*}
\|\eta(t)\|^2_{\mathcal{H}^{1,\infty}}
\le \|\eta_0\|^2_{\mathcal{H}^{1,\infty}}
+C C_4\left\{\delta \varepsilon \int_0^t \|\nabla^2 u\|_{\mathcal{H}^4}^2 d\tau
    +\int_0^t N_m^4(\tau)d\tau\right\},
\end{equation*}
which, together with  \eqref{382}, gives directly
\begin{equation*}
\|\nabla u\|_{\mathcal{H}^{1,\infty}}^2
\le C_4 \left\{N_m(0)+\|u\|_{\mathcal{H}^{m}}^2
+\delta \varepsilon \int_0^t \|\nabla^2 u\|_{\mathcal{H}^4}^2 d\tau
    +C_\delta \int_0^t N_m^4(\tau)d\tau \right\}.
\end{equation*}
Therefore, we complete the proof of the lemma \ref{lemma3.8}.
\end{proof}

\subsection{Proof of Theorem \ref{Theoream3.1}}
\quad
By virtue of the definition of $N_m(t)$ and $Q(t)$,
one deduces from the estimates in Lemma \ref{lemma3.6} that
\begin{equation*}
Q(t)\le C C_4 P(N_m(t)),
\end{equation*}
which, together with \eqref{3204}, gives directly
\begin{equation}\label{3205}
\begin{aligned}
&\underset{0\le \tau \le t}{\sup}
(\|(u, \nabla d)\|_{\mathcal{H}^{m}}^2+\|(\nabla u, \Delta d)\|_{\mathcal{H}^{m-1}}^2)
+\varepsilon \int_0^t \|\nabla u\|_{\mathcal{H}^{m}}^2 d\tau\\
&\quad  +\varepsilon \int_0^t \|\nabla^2 u\|_{\mathcal{H}^{m-1}}^2 d\tau
+\int_0^t \|\Delta d\|_{\mathcal{H}^{m}}^2 d\tau
+\int_0^t \|\nabla \Delta d\|_{\mathcal{H}^{m-1}}^2 d\tau\\
&\le CC_{m+2}\left\{\|(u_0, \nabla d_0)\|_{\mathcal{H}^{m}}^2
    +\|(\nabla u_0, \Delta d_0)\|_{\mathcal{H}^{m-1}}^2
    +P(N_m(t))\int_0^t N_m(t)d\tau \right\}.
\end{aligned}
\end{equation}
By virtue of the basic fact $|d|=1$(see \eqref{unit}),
the combination of \eqref{381} and \eqref{3205} yields that
\begin{equation*}
\begin{aligned}
&\underset{0\le \tau \le t}{\sup}N_m(\tau)
+\varepsilon \int_0^t \|\nabla u\|_{\mathcal{H}^{m}}^2 d\tau
+\varepsilon \int_0^t \|\nabla^2 u\|_{\mathcal{H}^{m-1}}^2 d\tau
+\int_0^t \|\Delta d\|_{\mathcal{H}^{m}}^2 d\tau\\
&\quad
+\int_0^t \|\nabla \Delta d\|_{\mathcal{H}^{m-1}}^2 d\tau
\le \widetilde{C}_2C_{m+2}\left\{N_m(0)+P(N_m(t))\int_0^t P(N_m(\tau)) d\tau\right\}.
\end{aligned}
\end{equation*}
Therefore, we complete the proof of Theorem \ref{Theoream3.1}.

\section{Proof of Theorem \ref{Theorem1.1} (Uniform Regularity)}\label{Proof1}
\quad
In this section, we will give the proof for the Theorem \ref{Theorem1.1}.
Indeed, we shall indicate how to combine the a priori estimates obtained
so far to prove the uniform existence result.
Fixing $m \ge 6$, we consider the initial data
$(u_0^\varepsilon, d_0^\varepsilon)\in X_{n,m}$ such that
\begin{equation}
\mathcal{I}_m(0)=\underset{0< \varepsilon \le 1}{\sup}
                 \|(u_0^\varepsilon, d_0^\varepsilon)\|_{X_{n,m}}
                 \le \widetilde{C}_0.
\end{equation}
For such initial data, since we are not aware of a local existence
result for \eqref{eq1} and \eqref{bc1}(or \eqref{bc2}),
we first establish the local existence of solution for \eqref{eq1} and \eqref{bc1}
with initial data $(u_0^\varepsilon, d_0^\varepsilon)\in X_{n,m}$.
For such initial data $(u_0^\varepsilon, d_0^\varepsilon)$,
it is easy to see that there exists a sequence of smooth approximate initial data
$(u_0^{\varepsilon, \delta}, d_0^{\varepsilon,\delta})\in X^{ap}_{n,m}$
($\delta$ being a regularity parameter), which has enough space regularity
so that the time derivatives at the initial time can be defined by the equations
\eqref{eq1} and the boundary compatibility conditions are satisfied.
Fixed $\varepsilon \in (0, 1]$, one constructs the approximate solutions as follows:\\
(1)Define $u^0=u_0^{\varepsilon, \delta}$
and $d^0=d_0^{\varepsilon,\delta}$.\\
(2)Assume that $(u^{k-1}, d^{k-1})$  has been defined for $k \ge 1$.
Let $(u^k, d^k)$ be the unique solution to the following linearized
initial data boundary value problem:
\begin{equation}\label{4eq1}
\left\{
\begin{aligned}
&u^k_t+u^{k-1}\cdot \nabla u^k-\varepsilon \Delta u^k+\nabla p^k
=-\nabla d^k \cdot \Delta d^k, & (x, t)\in \Omega \times (0, T),\\
&d^k_t-\Delta d^k=|\nabla d^{k-1}|^2 d^{k-1}-u^{k-1}\cdot \nabla d^{k-1},
& (x, t)\in \Omega \times (0, T),\\
&{\rm div}u^k=0, & (x, t)\in \Omega \times (0, T),\\
\end{aligned}
\right.
\end{equation}
with initial data
\begin{equation}\label{4id}
(u^k, d^k)|_{t=0}=(u_0^{\varepsilon,\delta}, d_0^{\varepsilon,\delta}),
\end{equation}
and Navier-type and Neumann boundary condition
\begin{equation}\label{4bc}
u^k\cdot n=0,
\quad n\times (\nabla \times u^k)=[Bu^k]_\tau,
\quad {\rm and}\quad
\frac{\partial d^k}{\partial n}=0,
\quad {\rm on}~\partial \Omega.
\end{equation}
Since $u^k$ and $d^k$ are decoupled, the existence of global unique smooth
solution $(u^k, d^k)(t)$ of \eqref{4eq1}-\eqref{4bc} can be obtained by using
classical methods, for example, refer to \cite{Wen-Ding} or \cite{Gao-Tao-Yao}.
On the other hand, by virtue of
$(u_0^{\varepsilon, \delta}, d_0^{\varepsilon, \delta})\in H^{4m}\times H^{4(m+1)}$,
one proves that there exists a positive time $\widetilde{T}_1=\widetilde{T}_1(\varepsilon)$
(depending on $\varepsilon$,
$\|u_0^{\varepsilon, \delta}\|_{H^{4m}}$
and
$\| d_0^{\varepsilon, \delta}\|_{H^{4(m+1)}}$) such that
\begin{equation}\label{41}
\|u^k(t)\|_{H^{4m}}^2
+\|d^k(t)\|_{H^{4(m+1)}}^2 \le \widetilde{C}_1 \quad {\rm for}~0\le t \le \widetilde{T}_1,
\end{equation}
where the constant $\widetilde{C}_1$ depends on $\widetilde{C}_0, \varepsilon^{-1}$,
$\|u_0^{\varepsilon,\delta}\|_{H^{4m}}$
and
$\|d_0^{\varepsilon, \delta}\|_{H^{4(m+1)}}$.
Based on the above uniform time $\widehat{T}_1(\le \widetilde{T}_1)$(independent of $k$)
such that $(u^k, d^k)$ converges to a limit
$(u^{\varepsilon, \delta}, d^{\varepsilon, \delta})$
as $k \rightarrow +\infty$ in the following strong sense:
$$
u^k \rightarrow u^{\varepsilon, \delta} ~{\rm in}~L^\infty(0, \widehat{T}_1; L^2)
\quad {\rm and}\quad \nabla u^k \rightarrow \nabla u^{\varepsilon, \delta}
~{\rm in}~L^2(0, \widehat{T}_1; L^2),
$$
and
$$
d^k \rightarrow d^{\varepsilon, \delta} ~{\rm in}~L^\infty(0, \widehat{T}_1; H^1)
\quad {\rm and}\quad \Delta d^k \rightarrow \Delta d^{\varepsilon, \delta}
~{\rm in}~L^2(0, \widehat{T}_1; L^2).
$$
It is easy to check that $(u^{\varepsilon, \delta}, d^{\varepsilon, \delta})$
is a classical solution to the problem \eqref{eq1} and \eqref{bc1}
with initial data $(u_0^{\varepsilon, \delta}, d_0^{\varepsilon, \delta})$.
In view of the lower semicontinuity  of norms, one can deduce from
the uniform bounds  \eqref{41} that
\begin{equation}\label{42}
\|u^{\varepsilon, \delta}(t)\|_{H^{4m}}^2
+
\|d^{\varepsilon, \delta}(t)\|_{H^{4(m+1)}}^2
\le \widetilde{C}_1 \quad {\rm for}~0\le t \le \widetilde{T}_1.
\end{equation}
Applying the a priori estimates given in Theorem \ref{Theoream3.1} to
the solution
$(u^{\varepsilon, \delta}, d^{\varepsilon, \delta})$, one can obtain a uniform
time $T_0$ and constant $C_3$(independent of $\varepsilon$ and $\delta$) such that
for all $t \in [0, \min\{T_0, \widehat{T}_1\}]$
\begin{equation}\label{42}
\underset{0\le \tau \le t}{\sup}N_m(\tau)
+\varepsilon \int_0^t (\|\nabla u\|_{\mathcal{H}^{m}}^2
+\|\nabla^2 u\|_{\mathcal{H}^{m-1}}^2) d\tau
+\int_0^t (\|\Delta d\|_{\mathcal{H}^{m}}^2
+\|\nabla \Delta d\|_{\mathcal{H}^{m-1}}^2) d\tau
\le \widetilde{C}_3,
\end{equation}
where $T_0$ and $\widetilde{C}_3$ depend only on $\widehat{C}_0$
and $\mathcal{I}_m(0)$. Based on the uniform estimate \eqref{42} for
$(u^{\varepsilon, \delta}, d^{\varepsilon, \delta})$, one can pass
the limit $\delta \rightarrow 0$ to get a strong solution
$(u^{\varepsilon}, d^{\varepsilon})$ of \eqref{eq1} and \eqref{bc1}
with initial data $(u_0^\varepsilon, d_0^\varepsilon)$ satisfying
\eqref{4eq1} by using a strong compactness arguments(see \cite{Simon}).
Indeed, it follows from
\eqref{42} that $(u^{\varepsilon, \delta}, \nabla d^{\varepsilon, \delta})$
is bounded uniformly in $L^\infty(0, \widetilde{T}_2; H_{co}^m)$,
where $\widetilde{T}_2=\min\{T_0, \widetilde{T}_1\}$, while
$(\nabla u^{\varepsilon, \delta}, \Delta d^{\varepsilon, \delta})$
is bounded uniformly in $L^\infty(0, \widetilde{T}_2; H_{co}^{m-1})$,
and $(\partial_t u^{\varepsilon, \delta}, \partial_t \nabla d^{\varepsilon, \delta})$
is bounded uniformly in $L^\infty(0, \widetilde{T}_2; H_{co}^{m-1})$.
Then, the strong compactness argument implies that
$(u^{\varepsilon, \delta}, \nabla d^{\varepsilon, \delta})$
is compact in $\mathcal{C}([0, \widetilde{T}_2]; H_{co}^{m-1})$.
In particular, there exists a sequence $\delta_n \rightarrow 0^+$
and $(u^\varepsilon, \nabla d^\varepsilon)\in \mathcal{C}([0, \widetilde{T}_2]; H_{co}^{m-1})$
such that
\begin{equation*}
(u^{\varepsilon, \delta_n}, \nabla d^{\varepsilon, \delta_n})
\rightarrow  (u^{\varepsilon, \delta}, \nabla d^{\varepsilon, \delta})
~{\rm in}~\mathcal{C}([0, \widetilde{T}_2]; H_{co}^{m-1})~{\rm as}~
\delta_n \rightarrow 0^+.
\end{equation*}
Moreover, applying the lower semicontinuity of norms to the bounds
\eqref{42}, one obtains the bounds \eqref{42} for $(u^\varepsilon, d^\varepsilon)$.
It follows from the bounds of \eqref{42} for $(u^\varepsilon, d^\varepsilon)$,
and the anisotropic Sobolev inequality \eqref{24} that
$$
\begin{aligned}
&\underset{0\le t\le \widetilde{T}_2}{\sup}
\|(u^{\varepsilon,\delta_n}-u^{\varepsilon}, d^{\varepsilon,\delta_n}-d^{\varepsilon})\|_{L^\infty}^2\\
&\le C\underset{0\le t\le \widetilde{T}_2}{\sup}
  \|\nabla(u^{\varepsilon,\delta_n}-u^{\varepsilon}, d^{\varepsilon,\delta_n}-d^{\varepsilon})\|_{H^1_{co}}
  \|(u^{\varepsilon,\delta_n}-u^{\varepsilon},
     d^{\varepsilon,\delta_n}-d^{\varepsilon})\|_{H^2_{co}}
  \rightarrow 0,
\end{aligned}
$$
and
$$
\underset{0\le t\le \widetilde{T}_2}{\sup}
\|\nabla(d^{\varepsilon,\delta_n}-d^{\varepsilon})\|_{L^\infty}^2
\le C\underset{0\le t\le \widetilde{T}_2}{\sup}
  \|\Delta(d^{\varepsilon,\delta_n}-d^{\varepsilon})\|_{H^1_{co}}
  \|\nabla(d^{\varepsilon,\delta_n}-d^{\varepsilon})\|_{H^2_{co}}
  \rightarrow 0,
$$
Hence, it is easy to check that $(u^\varepsilon, d^\varepsilon)$
is a weak solution of the nematic liquid crystal flows \eqref{eq1}.
The uniqueness of the solution $(u^\varepsilon, d^\varepsilon)$
comes directly from the Lipschitz regularity of solution.
Thus, the whole family $(u^{\varepsilon, \delta}, d^{\varepsilon, \delta})$
converge to $(u^{\varepsilon}, d^{\varepsilon})$.
Therefore,
we have established the local solution of equation \eqref{eq1}
and \eqref{bc1} with initial data
$(u_0^{\varepsilon}, d_0^{\varepsilon})\in X_{n,m},~t\in [0, T_2]$.

We shall use the local existence results to prove Theorem \ref{Theorem1.1}.
If $T_0 \le \widetilde{T}$, then Theorem \ref{Theorem1.1} follows
from \eqref{42} with $\widetilde{C}_1=\widetilde{C}_3$.
On the other hand, for the case $\widetilde{T} \le  T_0$,
based on the uniform estimate \eqref{42}, we can use the local existence
results established above to extend our solution step by step to the
uniform time interval $t\in [0, T_0]$.
Therefore, we complete the proof of Theorem \ref{Theorem1.1}.

\section{Proof of Theorem \ref{Theorem1.2} (Inviscid Limit)}\label{Proof2}

\quad In this section, we study the vanishing viscosity of solutions
for the equation \eqref{eq1} to the solution for the equation \eqref{eq2}
with a rate of convergence. It is easy to see that the solution
$(u, d)\in H^3 \times H^4$ of equation \eqref{eq1} and \eqref{bc1}
with initial data $(u_0, d_0)\in H^3 \times H^4$ satisfies
$$
\|u\|_{C([0, T_1]; H^{3})}
+\|d\|_{C([0, T_1]; H^{4})}
\le \widetilde{C}_4
$$
where $\widetilde{C}_4$ depends only on $\|(u_0, d_0)\|_{H^3 \times H^4}$.
On the other hand, it follows from the Theorem \ref{Theorem1.1}
that the solution $(u^\varepsilon, d^\varepsilon)$ of equation
\eqref{eq1} and \eqref{bc1} with initial data $(u_0, d_0)$
satisfies
$$
\|(u^\varepsilon, d^\varepsilon)\|_{X_m}\le \widetilde{C}_1,
\quad \forall t\in [0, T_0],
$$
where $T_0$ and $\widetilde{C}_1$ are defined in Theorem \ref{Theorem1.1}.
In particular, this uniform regularity implies the bound
$$
\|u^\varepsilon\|_{W^{1,\infty}}+\|d^\varepsilon\|_{W^{2,\infty}}\le \widetilde{C}_1,
$$
which plays an important role in the proof of Theorem \ref{Theorem1.2}.

Let us define
\begin{equation*}
v^\varepsilon=u^\varepsilon-u, \quad \varphi^\varepsilon=d^\varepsilon-d.
\end{equation*}
It then follows from \eqref{eq1} that
\begin{equation}\label{eq3}
\left\{
\begin{aligned}
&\partial_t v^\varepsilon+u\cdot \nabla v^\varepsilon
+\varepsilon \nabla \times (\nabla \times v^\varepsilon)+\nabla(p^\varepsilon-p)
=R_1^\varepsilon,\\
&{\rm div}v^\varepsilon=0,\\
&\partial_t \varphi^\varepsilon+u\cdot \nabla \varphi^\varepsilon
-\Delta \varphi^\varepsilon=R_2^\varepsilon,\\
\end{aligned}
\right.
\end{equation}
where
\begin{equation*}
\begin{aligned}
&R_1^\varepsilon\triangleq\varepsilon \Delta u-v^\varepsilon\cdot \nabla u^\varepsilon
                 -\nabla d^\varepsilon \cdot \Delta \varphi^\varepsilon
                 -\nabla \varphi^\varepsilon \cdot \Delta d,\\
&R_2^\varepsilon\triangleq-v^\varepsilon\cdot \nabla d^\varepsilon
                 +(\nabla \varphi^\varepsilon : \nabla (d^\varepsilon+d))d^\varepsilon
                 +|\nabla d|^2 \varphi^\varepsilon.
\end{aligned}
\end{equation*}
The boundary conditions to \eqref{eq3} are
\begin{equation}\label{bc3}
\left\{
\begin{aligned}
& v^\varepsilon \cdot n=0,
\quad n\times (\nabla \times v^\varepsilon)
=[B v^\varepsilon]_\tau+[Bu]_\tau-n\times w, \quad x\in \partial \Omega,\\
&\frac{\partial \varphi^\varepsilon}{\partial n}=0, \quad x\in \partial \Omega.
\end{aligned}
\right.
\end{equation}

\begin{lemm}\label{lemma5.1}
For $t \in [0, \min\{T_0, T_1\}]$, it holds that
\begin{equation}\label{511}
\underset{0\le \tau \le t}{\sup}(\|v^\varepsilon(\tau)\|_{L^2}
       +\|\varphi^\varepsilon(\tau)\|_{H^1}^2)
+\varepsilon \int_0^t \! \!\int |\nabla v^\varepsilon|^2 dx d\tau
+\int_0^t \! \! \int(|\nabla \varphi^\varepsilon|^2+|\Delta \varphi^\varepsilon|^2)dx d\tau
\le C \varepsilon^{\frac{3}{2}}.
\end{equation}
where $C>0$ depend only on $\widetilde{C}_0, \widetilde{C}_1$ and $\widetilde{C}_4$.
\end{lemm}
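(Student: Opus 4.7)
The strategy is to run simultaneous energy estimates on the difference system \eqref{eq3}: the $L^2$ estimate for $v^\varepsilon$ and both the $L^2$ and $H^1$ estimates for $\varphi^\varepsilon$. Because $v^\varepsilon|_{t=0}=0$ and $\varphi^\varepsilon|_{t=0}=0$, the resulting differential inequality can be closed by a nonlinear Gronwall argument that produces the rate $\varepsilon^{3/2}$. The uniform bounds from Theorem \ref{Theorem1.1} (in particular $\|u^\varepsilon\|_{W^{1,\infty}}+\|d^\varepsilon\|_{W^{2,\infty}}\le\widetilde{C}_1$) together with the assumed regularity $u\in L^\infty(H^3)$, $d\in L^\infty(H^4)$ will be used freely to control all coefficients.

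\textbf{Step 1 ($v^\varepsilon$ estimate).} Test the first equation of \eqref{eq3} against $v^\varepsilon$. Since $\mathrm{div}\,u=0$ and $v^\varepsilon\cdot n=0$, both the transport term and the pressure term vanish. Integration by parts on $\varepsilon\nabla\times(\nabla\times v^\varepsilon)$ produces $\varepsilon\|\nabla\times v^\varepsilon\|^2$ in the bulk plus the boundary term $\varepsilon\int_{\partial\Omega}\bigl(n\times(\nabla\times v^\varepsilon)\bigr)\cdot v^\varepsilon\,d\sigma$. Using \eqref{bc3} this splits as $\varepsilon\int_{\partial\Omega}[Bv^\varepsilon]_\tau\cdot v^\varepsilon\,d\sigma+\varepsilon\int_{\partial\Omega}\bigl([Bu]_\tau-n\times w\bigr)\cdot v^\varepsilon\,d\sigma$. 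The first piece is harmless and is absorbed via the trace inequality. The second piece is the \emph{main obstacle}: the slip defect $g\triangleq[Bu]_\tau-n\times w$ does not vanish (since $u$ need not satisfy the Navier condition) but is merely bounded in $L^2(\partial\Omega)$. I will estimate it via the trace inequality $|v^\varepsilon|_{L^2(\partial\Omega)}^2\le C\|v^\varepsilon\|\|\nabla v^\varepsilon\|+C\|v^\varepsilon\|^2$ together with Young's inequality with exponents $(4,4/3)$:
\begin{equation*}
\varepsilon\,|v^\varepsilon|_{L^2(\partial\Omega)}\le \tfrac{\varepsilon}{8}\|\nabla v^\varepsilon\|^2+C\varepsilon\|v^\varepsilon\|^{2/3}+C\varepsilon^{3/2}+C\|v^\varepsilon\|^2.
\end{equation*}
This is precisely the place where the exponent $3/2$ arises. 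The remaining terms from $R_1^\varepsilon$ are routine: $\varepsilon\int\Delta u\cdot v^\varepsilon\le C\varepsilon\|v^\varepsilon\|\le C\varepsilon^{3/2}+C\|v^\varepsilon\|^2$; $|\int v^\varepsilon\cdot\nabla u^\varepsilon\cdot v^\varepsilon|\le\|\nabla u^\varepsilon\|_{L^\infty}\|v^\varepsilon\|^2$; and the director-coupling terms $\int\nabla d^\varepsilon\cdot\Delta\varphi^\varepsilon\cdot v^\varepsilon$ and $\int\nabla\varphi^\varepsilon\cdot\Delta d\cdot v^\varepsilon$ are bounded by $\tfrac14\|\Delta\varphi^\varepsilon\|^2+C\|v^\varepsilon\|^2+C\|\nabla\varphi^\varepsilon\|^2$. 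After using Proposition \ref{prop2.1} to trade $\|\nabla\times v^\varepsilon\|^2$ for $\|\nabla v^\varepsilon\|^2$ modulo $\|v^\varepsilon\|^2$, I obtain
\begin{equation*}
\tfrac{d}{dt}\|v^\varepsilon\|^2+\tfrac{\varepsilon}{2}\|\nabla v^\varepsilon\|^2\le C\varepsilon^{3/2}+C\varepsilon\|v^\varepsilon\|^{2/3}+C\bigl(\|v^\varepsilon\|^2+\|\varphi^\varepsilon\|_{H^1}^2\bigr)+\tfrac14\|\Delta\varphi^\varepsilon\|^2.
\end{equation*}

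\textbf{Step 2 ($\varphi^\varepsilon$ estimate).} Test the third equation of \eqref{eq3} against $\varphi^\varepsilon$ and against $-\Delta\varphi^\varepsilon$ separately. The Neumann condition $\partial_n\varphi^\varepsilon=0$ produces no boundary contribution, so the dissipative terms $\|\nabla\varphi^\varepsilon\|^2$ and $\|\Delta\varphi^\varepsilon\|^2$ appear cleanly on the left. In $R_2^\varepsilon$, each term is controlled by uniform $L^\infty$ bounds on $\nabla d^\varepsilon$, $\nabla d$, $d^\varepsilon$ coming from Theorem \ref{Theorem1.1} and the hypothesis on $(u,d)$; e.g. $|\int v^\varepsilon\cdot\nabla d^\varepsilon\cdot\varphi^\varepsilon|\le C\|v^\varepsilon\|\|\varphi^\varepsilon\|$, while $|\int(\nabla\varphi^\varepsilon:\nabla(d^\varepsilon+d))d^\varepsilon\cdot\varphi^\varepsilon|\le\tfrac14\|\nabla\varphi^\varepsilon\|^2+C\|\varphi^\varepsilon\|^2$, and similarly with $-\Delta\varphi^\varepsilon$ as test using $\|u\|_{L^\infty}$ bounded for the convective term. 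Summing yields
\begin{equation*}
\tfrac{d}{dt}\|\varphi^\varepsilon\|_{H^1}^2+\tfrac12\|\nabla\varphi^\varepsilon\|^2+\tfrac12\|\Delta\varphi^\varepsilon\|^2\le C\|v^\varepsilon\|^2+C\|\varphi^\varepsilon\|_{H^1}^2.
\end{equation*}

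\textbf{Step 3 (closing the argument).} Adding the two inequalities and absorbing the $\tfrac14\|\Delta\varphi^\varepsilon\|^2$ produced in Step 1 into the dissipation from Step 2, and writing $y(t)\triangleq\|v^\varepsilon(t)\|^2+\|\varphi^\varepsilon(t)\|_{H^1}^2$, I arrive at
\begin{equation*}
y'(t)+\tfrac{\varepsilon}{2}\|\nabla v^\varepsilon\|^2+\tfrac14\|\nabla\varphi^\varepsilon\|^2+\tfrac14\|\Delta\varphi^\varepsilon\|^2\le C\varepsilon^{3/2}+C\varepsilon\,y(t)^{1/3}+C\,y(t),\qquad y(0)=0.
\end{equation*}
The final hurdle is extracting the rate $\varepsilon^{3/2}$ from this nonlinear inequality. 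I will use a bootstrap/continuity argument: assuming a priori $y(s)\le 2C_*\varepsilon^{3/2}$ on some subinterval, the middle term is bounded by $C(2C_*)^{1/3}\varepsilon^{3/2}$, so $y'\le \widetilde{C}\varepsilon^{3/2}+Cy$, which by linear Gronwall gives $y(t)\le C_*\varepsilon^{3/2}$ on $[0,T_2]$ for $T_2$ small but independent of $\varepsilon$, closing the bootstrap. (Equivalently, one applies Bihari's inequality to the majorant ODE $Y'=C\varepsilon^{3/2}+C\varepsilon Y^{1/3}+CY$ with $Y(0)=0$.) Finally, integrating the dissipative terms in the combined inequality over $[0,t]$ produces the time-integral bounds stated in \eqref{511}.
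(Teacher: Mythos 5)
Your proposal is correct and follows essentially the same route as the paper: an $L^2$ energy estimate for $v^\varepsilon$ with the slip-defect boundary term $\varepsilon\int_{\partial\Omega}([Bu]_\tau-n\times w)\cdot v^\varepsilon\,d\sigma$ handled by the trace inequality and Young's inequality to produce the $\varepsilon^{3/2}$ rate, combined with $L^2$ and $H^1$ estimates for $\varphi^\varepsilon$ (where the Neumann condition kills all boundary terms) and a Gronwall argument. In fact your explicit bound $\varepsilon|v^\varepsilon|_{L^2(\partial\Omega)}\le\tfrac{\varepsilon}{8}\|\nabla v^\varepsilon\|^2+C\varepsilon\|v^\varepsilon\|^{2/3}+C\varepsilon^{3/2}+C\|v^\varepsilon\|^2$ together with the nonlinear Gronwall/Bihari step is the correct rendering of the paper's \eqref{5114}, which as printed leaves an unabsorbable $C_\delta\|\nabla v^\varepsilon\|^2$ on the right-hand side; note only that your bootstrap closes on the whole interval $[0,\min\{T_0,T_1\}]$ by taking the constant $C_*$ large (since the $y^{1/3}$ term is sublinear in $C_*$), so no shrinking of $T_2$ is needed.
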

\begin{proof}
Multiplying \eqref{eq3}$_1$ by $v^\varepsilon$ and integrating by parts, we find
\begin{equation}\label{512}
\frac{1}{2}\frac{d}{dt}\int |v^\varepsilon|^2 dx
+\varepsilon \int \nabla \times (\nabla \times v^\varepsilon)\cdot v^\varepsilon dx
=\int R_1^\varepsilon \cdot v^\varepsilon dx.
\end{equation}
Integrating by part and applying the boundary condition \eqref{bc3},
one arrives at directly
\begin{equation}\label{513}
\begin{aligned}
&\int \nabla \times (\nabla \times v^\varepsilon)\cdot v^\varepsilon dx\\
&=\int_{\partial \Omega} n\times (\nabla \times v^\varepsilon)\cdot v^\varepsilon dx
   +\int |\nabla \times v^\varepsilon|^2 dx\\
&=\int_{\partial \Omega}([B v^\varepsilon]_\tau+[B u]_\tau-n\times w)\cdot v^\varepsilon d\sigma
  +\int |\nabla \times v^\varepsilon|^2 dx\\
&\le C|v^\varepsilon|_{L^2(\partial \Omega)}^2+C|v^\varepsilon|_{L^2(\partial \Omega)}
      +\int |\nabla \times v^\varepsilon|^2 dx.
\end{aligned}
\end{equation}
The application of Proposition \ref{prop2.1} gives directly
\begin{equation}\label{514}
\|\nabla v^\varepsilon\|_{L^2}^2
\le C(\|\nabla \times v^\varepsilon\|_{L^2}^2+\|v^\varepsilon\|_{L^2}^2).
\end{equation}
The combination of \eqref{512}-\eqref{514} yields that
\begin{equation}\label{515}
\begin{aligned}
&\frac{1}{2}\frac{d}{dt}\int |v^\varepsilon|^2 dx
+C_1 \varepsilon \int |\nabla v^\varepsilon|^2 dx\\
&\le \int R_1^\varepsilon \cdot v^\varepsilon dx
+C\varepsilon|v^\varepsilon|_{L^2(\partial \Omega)}^2
+C\varepsilon|v^\varepsilon|_{L^2(\partial \Omega)}
+C\varepsilon\|v^\varepsilon\|_{L^2}^2.
\end{aligned}
\end{equation}
By virtue of the H\"{o}lder and Cauchy inequalities, we obtain
\begin{equation*}
\begin{aligned}
\int R_1^\varepsilon \cdot v^\varepsilon dx
&\le \delta \|\Delta \varphi^\varepsilon\|_{L^2}^2
     +\varepsilon\|\Delta u\|_{L^\infty}\|v^\varepsilon\|_{L^2}
     +\|\nabla u^\varepsilon\|_{L^\infty}\|v^\varepsilon\|_{L^2}^2\\
&\quad  +C_\delta\|\nabla d^\varepsilon\|_{L^\infty}^2\|v^\varepsilon\|_{L^2}^2
        +\|\Delta d\|_{L^\infty}\|\nabla \varphi^\varepsilon\|_{L^2}\|v^\varepsilon\|_{L^2}\\
&\le \delta \|\Delta \varphi^\varepsilon\|_{L^2}^2
     +\varepsilon\|v^\varepsilon\|_{L^2}
     +C_\delta(\|\nabla \varphi^\varepsilon\|_{L^2}^2
     +\|v^\varepsilon\|_{L^2}^2),
\end{aligned}
\end{equation*}
which, together  with \eqref{515} , gives immediately
\begin{equation}\label{516}
\begin{aligned}
&\frac{1}{2}\frac{d}{dt}\int |v^\varepsilon|^2 dx
+C_1 \varepsilon \int |\nabla v^\varepsilon|^2 dx\\
&\le C\varepsilon|v^\varepsilon|_{L^2(\partial \Omega)}^2
+C\varepsilon|v^\varepsilon|_{L^2(\partial \Omega)}
+\delta \|\Delta \varphi^\varepsilon\|_{L^2}^2
     +\varepsilon\|v^\varepsilon\|_{L^2}
     +C_\delta(\|\nabla \varphi^\varepsilon\|_{L^2}^2
     +\|v^\varepsilon\|_{L^2}^2).
\end{aligned}
\end{equation}
Multiplying \eqref{eq3} by $-\Delta \varphi^\varepsilon$ and integrating over $\Omega$, we find
\begin{equation}\label{517}
\begin{aligned}
-\int \partial_t \varphi^\varepsilon \cdot \Delta \varphi^\varepsilon dx
+\int |\Delta \varphi^\varepsilon|^2 dx
=
-\int (u\cdot \nabla \varphi^\varepsilon) \cdot \Delta \varphi^\varepsilon dx
-\int R_2^\varepsilon \cdot \Delta \varphi^\varepsilon dx.
\end{aligned}
\end{equation}
Integrating by part and applying the boundary condition \eqref{bc3}, it holds that
\begin{equation}\label{518}
\begin{aligned}
-\int \partial_t \varphi^\varepsilon \cdot \Delta \varphi^\varepsilon dx
&=-\int_{\partial \Omega} \partial_t \varphi^\varepsilon \cdot (n\cdot \nabla \varphi^\varepsilon) d\sigma
+\frac{1}{2}\frac{d}{dt}\int |\nabla \varphi^\varepsilon|^2 dx
=\frac{1}{2}\frac{d}{dt}\int |\nabla \varphi^\varepsilon|^2 dx.
\end{aligned}
\end{equation}
Applying the Cauchy inequality, it is easy to deduce that
\begin{equation}\label{519}
\begin{aligned}
&-\int (u\cdot \nabla \varphi^\varepsilon) \cdot \Delta \varphi^\varepsilon dx
-\int R_2^\varepsilon \cdot \Delta \varphi^\varepsilon dx\\
&\le \delta \|\Delta \varphi^\varepsilon\|_{L^2}^2
     +C_\delta \|u\|_{L^\infty}^2\|\nabla \varphi^\varepsilon\|_{L^2}^2
     +C_\delta \|\nabla d^\varepsilon\|_{L^\infty}^2
     (\|\nabla \varphi^\varepsilon\|_{L^2}^2+\|v^\varepsilon\|_{L^2}^2)\\
&\quad   +C_\delta \|\nabla d\|_{L^\infty}^2
     (\|\nabla \varphi^\varepsilon\|_{L^2}^2+\|\varphi^\varepsilon\|_{L^2}^2)\\
&\le \delta \|\Delta \varphi^\varepsilon\|_{L^2}^2
     +C_\delta(\|v^\varepsilon\|_{L^2}^2+
          \|\varphi^\varepsilon\|_{L^2}^2+\|\nabla \varphi^\varepsilon\|_{L^2}^2).
\end{aligned}
\end{equation}
Substituting \eqref{518}-\eqref{519} into \eqref{517}, we obtain
\begin{equation}\label{5110}
\begin{aligned}
\frac{1}{2}\frac{d}{dt}\int |\nabla \varphi^\varepsilon|^2 dx
+\frac{3}{4}\int |\Delta \varphi^\varepsilon|^2 dx
\le
C(\|v^\varepsilon\|_{L^2}^2+
          \|\varphi^\varepsilon\|_{L^2}^2+\|\nabla \varphi^\varepsilon\|_{L^2}^2).
\end{aligned}
\end{equation}
In order to control the term $\int |\varphi^\varepsilon|^2 dx$ on the right hand side of
\eqref{5110}, we multiply the equation \eqref{eq3}$_3$ by
$\varphi^\varepsilon$ and integrating by part to get that
\begin{equation}\label{5111}
\frac{1}{2}\frac{d}{dt}\int |\varphi^\varepsilon|^2 dx
+\int |\nabla \varphi^\varepsilon|^2 dx
=\int R_2^\varepsilon \cdot \varphi^\varepsilon dx.
\end{equation}
In view of the H\"{o}lder inequality, one arrives at
\begin{equation*}
\begin{aligned}
\int R_2^\varepsilon \cdot \varphi^\varepsilon dx
&\le \|\nabla d^\varepsilon\|_{L^\infty}
     (\|v^\varepsilon\|_{L^2}\|\varphi^\varepsilon\|_{L^2}
      +\|\varphi^\varepsilon\|_{L^2}\|\nabla \varphi^\varepsilon\|_{L^2})\\
&\quad  + \|\nabla d\|_{L^\infty}
     \|\nabla \varphi^\varepsilon\|_{L^2}\|\varphi^\varepsilon\|_{L^2}
      +\|\nabla d\|_{L^\infty}^2\|\varphi^\varepsilon\|_{L^2}^2)\\
&\le
C(\|v^\varepsilon\|_{L^2}^2+
          \|\varphi^\varepsilon\|_{L^2}^2+\|\nabla \varphi^\varepsilon\|_{L^2}^2),
\end{aligned}
\end{equation*}
which, together with \eqref{516}, \eqref{5110} and \eqref{5111} , yields directly
\begin{equation}\label{5112}
\begin{aligned}
&\frac{1}{2}\frac{d}{dt}\int (|v^\varepsilon|^2
+|\varphi^\varepsilon|^2+|\nabla \varphi^\varepsilon|^2) dx
+C_1 \varepsilon\int |\nabla v^\varepsilon|^2 dx
+\frac{3}{4}\int (|\nabla \varphi^\varepsilon|^2+|\Delta \varphi^\varepsilon|^2) dx\\
&\le C\varepsilon|v^\varepsilon|_{L^2(\partial \Omega)}^2
+C\varepsilon|v^\varepsilon|_{L^2(\partial \Omega)}+\varepsilon^2
+C(\|v^\varepsilon\|_{L^2}^2+
          \|\varphi^\varepsilon\|_{L^2}^2+\|\nabla \varphi^\varepsilon\|_{L^2}^2).
\end{aligned}
\end{equation}
By virtue of the trace theorem in Proposition \ref{prop2.3}
and Cauchy inequality, one deduces that
\begin{equation}\label{5113}
|v^\varepsilon|^2_{L^2(\partial \Omega)}
\le \delta \|\nabla v^\varepsilon\|_{L^2}^2+C_\delta \|v^\varepsilon\|_{L^2}^2,
\end{equation}
and
\begin{equation}\label{5114}
\begin{aligned}
\varepsilon |v^\varepsilon|_{L^2(\partial \Omega)}
&\le \varepsilon \|v^\varepsilon\|_{L^2}^{\frac{1}{2}}
          \|\nabla v^\varepsilon\|_{L^2}^{\frac{1}{2}}
\le \delta \varepsilon \|\nabla v^\varepsilon\|_{L^2}^2
    +C_\delta \varepsilon \|\nabla v^\varepsilon\|_{L^2}^{\frac{2}{3}}\\
&\le \delta \varepsilon \|\nabla v^\varepsilon\|_{L^2}^2
    +C_\delta  \|\nabla v^\varepsilon\|_{L^2}^2
    +\varepsilon^{\frac{3}{2}}.
\end{aligned}
\end{equation}
Then the combination of \eqref{5112}-\eqref{5114} yields immediately
\begin{equation*}
\begin{aligned}
&\frac{d}{dt}\int (|v^\varepsilon|^2
+|\varphi^\varepsilon|^2+|\nabla \varphi^\varepsilon|^2) dx
+C_1 \varepsilon\int |\nabla v^\varepsilon|^2 dx
+\int (|\nabla \varphi^\varepsilon|^2+|\Delta \varphi^\varepsilon|^2) dx\\
&\le \varepsilon^{\frac{3}{2}}
+C(\|v^\varepsilon\|_{L^2}+
          \|\varphi^\varepsilon\|_{L^2}^2+\|\nabla \varphi^\varepsilon\|_{L^2}^2),
\end{aligned}
\end{equation*}
which, together with the Gr\"{o}nwall inequality,
completes the proof of the lemma \ref{lemma5.1}.
\end{proof}

\begin{rema}
By virtue of the nonlinear terms $\nabla d^\varepsilon \cdot \Delta d^\varepsilon$
and $\nabla d \cdot \Delta d$ on the right hand side of \eqref{eq1}$_1$
and \eqref{eq2}$_1$ respectively, we can not obtain the convergence rate for the quantity
$\|\nabla(u-u^\varepsilon)\|$ or $\|\nabla^2(d-d^\varepsilon)\|$ in this paper.
\end{rema}

\emph{\bf{Proof for Theorem \ref{Theorem1.2}:}}
Indeed, the estimate \eqref{122} just follows from the estimate \eqref{511}
in Lemma \ref{lemma5.1}.
On the other hand, by virtue of Sobolev inequality, uniform estimate \eqref{111}
and convergence rate \eqref{511}, it is easy to deduce
$$
\|u^\varepsilon-u\|_{L^\infty(0, T_2; L^\infty(\Omega))}
\le C\|u^\varepsilon-u\|_{L^2}^{\frac{2}{5}}
\|u^\varepsilon-u\|_{W^{1,\infty}}^{\frac{3}{5}}\le C\varepsilon^{\frac{3}{10}},
$$
and
$$
\|d^\varepsilon-d\|_{L^\infty(0,T^2; W^{1,\infty}(\Omega))}
\le C\|d^\varepsilon-d\|_{H^1}^{\frac{2}{5}}
\|d^\varepsilon-d\|_{W^{2,\infty}}^{\frac{3}{5}}\le C\varepsilon^{\frac{3}{10}},
$$
which implies the convergence rate \eqref{123}.
Therefore, we complete the proof of Theorem \ref{Theorem1.2}.

\section*{Acknowledgements}
The author Jincheng Gao would like to thank Yong Wang for fruitful discussion
and suggestion.

\phantomsection
\addcontentsline{toc}{section}{\refname}


\begin{thebibliography}{99}

\bibitem{Lin}
F.H. Lin,
Nonlinear theory of defects in nematic liquid crystal: phase transition and
flow phenomena,
Commun. Pure Appl. Math. 42(1989) 789-814.

\bibitem{Ericksen}
J.L. Ericksen,
Hydrostatic theory of liquid crystal,
Arch. Ration. Mech. Anal. 9(1962)  371-378.


\bibitem{Leslie}
F.M. Leslie,
Some constitutive equations for liquid crystals,
Arch. Ration. Mech. Anal. 28(1968) 265-283.


\bibitem{Lin-Liu}
F.H. Lin, C. Liu,
Nonparabolic dissipative systems modeling the flowof liquid crystals,
Commun. Pure. Appl. Math. 48(1995) 501-537.

\bibitem{Gie-Kelliher}
G.M. Gie,J.P. Kelliher,
Boundary layer analysis of the Navier-Stokes equations with
generalized Navier boundary conditions,
J. Differential Equations 253 (2012) 1862-1892.

\bibitem{Xiao-Xin1}
Y.L. Xiao, Z.P. Xin,
On 3D Lagrangian Navier-Stokes $\alpha$ model with a class of vorticityslip
boundary conditions,
J. Math. Fluid Mech. 15 (2013) 215-247.

\bibitem{Constantin}
P. Constantin,
Note on loss of regularity for solutions of the 3-D incompressible Euler and
related equations,
Comm. Math. Phys. 104 (1986) 311-326.

\bibitem{Constantin-Foias}
P. Constantin, C. Foias,
Navier-Stokes Equation, University of Chicago Press, Chicago, 1988.

\bibitem{Kato}
T. Kato,
Nonstationary flows of viscous and ideal fluids in $\mathbb{R}^3$,
J. Funct. Anal. 9 (1972)  296-305.

\bibitem{Masmoudi}
N. Masmoudi,
Remarks about the inviscid limit of the Navier-Stokes system,
Comm. Math. Phys. 270 (2007) 777-788.

\bibitem{Beira1}
H. Beirao da Veiga,
Vorticity and regularity for flows under the Navier boundary condition,
Commun. Pure Appl. Anal. 5(2006) 907-918.

\bibitem{Beira2}
H. Beirao da Veiga, F. Crispo,
Concerning the $W^{k,p}$-inviscid limit for 3-d flows
under a slip boundary condition,
J. Math. Fluid Mech. 13(2011) 117-135.

\bibitem{Xiao-Xin}
Y.L. Xiao, Z.P. Xin,
On the vanishing viscosity limit for the 3D Navier-Stokes equations
with a slip boundary condition,
Commun. Pure Appl. Math. 60(2007) 1027-1055.

\bibitem{Masmoudi-Rousset}
N. Masmoudi, F. Rousset,
Uniform Regularity for the Navier-Stokes equation with Navier
boundary condition,
Arch. Ration. Mech. Anal. 203 (2012) 529-575.


\bibitem{Xiao-Xin2}
Y.L. Xiao, Z. P. Xin,
On the inviscid limit of the 3D Navier-Stokes equations with
generalized Navier-slip boundary conditions,
Comm. Math. Stat. 1 (2013) 259-279.


\bibitem{Wang-Xin-Yong}
Y. Wang, Z.P. Xin, Y. Yong,
Uniform Regularity and Vanishing Viscosity Limit for the Compressible Navier--Stokes with General Navier-Slip Boundary Conditions in Three-Dimensional Domains,
SIAM J. Math. Anal. 47 (2015) 4123-4191.

\bibitem{Wang}
Y. Wang,
Uniform Regularity and Vanishing Dissipation Limit for the Full Compressible Navier-Stokes
System in Three Dimensional Bounded Domain,
Arch. Rational Mech. Anal. 221 (2016) 1345-1415.

\bibitem{Lin-Zhang1}
F.H. Lin, P. Zhang,
Global small solutions to an MHD-type system: the three dimensionalcase,
Commun. Pure Appl. Math. 67(2014) 531-580.

\bibitem{Lin-Zhang2}
F.H. Lin, T. Zhang,
Global small solutions to a complex fluid model in three dimensional,
Arch. Ration. Mech. Anal. 216(2015) 905-929.

\bibitem{Lin-Zhang-Xu}
F.H. Lin, P. Zhang, L. Xu,
Global small solutions to 2-D incompressible MHD system,
 J. Differential Equations 259(2015) 5440-5485.



\bibitem{Temam}
R. Temam,
Navier-Stokes Equations: Theory and Numerical Analysis,
AMS, Providence, RI, 1984.

\bibitem{Gues}
O. Gues,
Probleme mixte hyperbolique quasi-lineaire caracteristique,
Comm. Partial Differential Equations 15 (1990) 595-645.

\bibitem{Wen-Ding}
H.Y. Wen, S.J. Ding,
Solutions of incompressible hydrodynamic flow of liquid crystals,
Nonlinear Anal. Real World Appl. 12 (2011) 1510-1531.

\bibitem{Gao-Tao-Yao}
J.C. Gao, Q. Tao, Z.A. Yao,
Strong solutions to the density-dependent incompressible nematic liquid crystal flows,
J. Differential Equations 260 (2016) 3691-3748.

\bibitem{Simon}
J. Simon,
Compact sets in the space $L^p(0, T;B)$,
Ann. Mat. Pura Appl. 146 (1987) 65-96.


\end{thebibliography}
\end{document}